%
%

%
%
%
\newif\ifsubsections
\subsectionstrue 

%
%
%
\documentclass[11pt]{amsart}
%
%
%
%
\usepackage[sc]{mathpazo}          
\usepackage{eulervm}               
\usepackage[scaled=0.86]{berasans} 
\usepackage[scaled=1]{inconsolata} 
\usepackage[T1]{fontenc}

%
%
\usepackage[%
	protrusion=true,
	expansion=false,
	auto=false
	]{microtype}

%
%
%
%
%
%
\usepackage{xcolor}
\usepackage{graphicx}
\graphicspath{{./figures/}}

%
%
%
	\definecolor{linkred}{rgb}{0.0,0.0,0.0}
	\definecolor{linkblue}{rgb}{0,0.0,0.0}

%
%


%
%
%
\PassOptionsToPackage{hyphens}{url} 
\usepackage[
    setpagesize=false,
    pagebackref,
pdfpagelabels=false,
   pdfstartview={FitH 1000},
    bookmarksnumbered=false,
    linktoc=all,
    colorlinks=true,
    anchorcolor=black,
    menucolor=black,
    runcolor=black,
    filecolor=black,
   linkcolor=linkblue,
	citecolor=linkblue,
	urlcolor=linkred,
]{hyperref}
\usepackage[backrefs,msc-links,nobysame]{amsrefs}

\theoremstyle{plain}
\newtheorem{eq}{Eq}[subsection]
\newtheorem{theorem}[eq]{Theorem}

\newtheorem{lemma}[eq]{Lemma}
\newtheorem{corollary}[eq]{Corollary}
\newtheorem{prop}[eq]{Proposition}

\newtheorem*{theorem*}{Theorem}{\bf}{\it}
\newtheorem*{proposition*}{Proposition}{\bf}{\it}
\newtheorem*{observation*}{Observation}{\bf}{\it}
\newtheorem*{lemmaA*}{Lemma A}{\bf}{\it}
\newtheorem*{conj}{Conjecture}{\bf}{\it}

\theoremstyle{definition}

\newtheorem{example}[eq]{Example}

\theoremstyle{remark}
\newtheorem{remark}[eq]{Remark}
\newtheorem{exercise}[eq]{Exercise}
%
%
%
%


\long\def\replace#1{#1}

\replace{%

}
\newcommand{\h}{\mathcal H}
\newcommand{\Z}{\mathbb Z}
\newcommand{\R}{\mathbb R}

\newcommand{\D}{\mathbb D}
\newcommand{\C}{\mathbb C}
\newcommand{\T}{\mathbb T}
\newcommand{\dv}{{\rm{div}}}
\newcommand{\dist}{{\rm{dist}}}

\newcommand{\n}{\mathcal{N}}
\newcommand{\p}{\mathcal{P}}
\newcommand{\Q}{\mathcal{Q}}
\newcommand{\Subset}{\subset\subset}
\newcommand{\osc}{\operatorname{osc}}
\newcommand{\grad}{\operatorname{grad}}
\ProvideTextCommandDefault{\cprime}{\tprime}
%
%

\begin{document}

%
%
%
%
%
%

\title[Quantitative unique continuation]{Lecture notes on quantitative unique continuation for solutions of second order elliptic equations}

%
%
\author[A. Logunov]{Alexander Logunov}
\address{A.L.: Department of Mathematics, Princeton University, Princeton, NJ, 08544;}
\email{log239@yandex.ru}
\thanks{This work was
completed during the time A.L. served as a Clay Research Fellow.}

\author[E. Malinnikova]{Eugenia Malinnikova}
\address{E.M.: School of Mathematics,  Institute for Advanced Study, 1 Einstein Dr, Princeton, NJ 08540}

  \address{ Department of Mathematical Sciences,
Norwegian University of Science and Technology
7491, Trondheim, Norway}
\email{eugenia.malinnikova@ntnu.no}
\thanks{E.M. is supported  by  Project 275113
of the Research Council of Norway and NSF grant no. DMS-1638352}

%
%
\subjclass[2010]{Primary \replace{35J15}; Secondary \replace{31B05}}
\keywords{Elliptic PDE, unique continuation, Laplace eigenfunctions}

\begin{abstract} 
In these  lectures we present some useful techniques to study quantitative properties of solutions of elliptic PDEs. Our aim is to  outline a proof of a recent result on  propagation of smallness. The ideas are also useful in the study of the zero sets of eigenfunctions of Laplace--Beltrami operator and we  discuss the connection. Some basic facts about second order elliptic PDEs in divergent form are collected in the Appendix at the end of the notes.
\end{abstract}
\maketitle
\thispagestyle{empty}

%
%


\section{Eigenfunctions of Laplace--Beltrami operators}\label{s:ef}
\subsection{Definition}
Let $M$ be an oriented  Riemannian manifold with metric tensor $g=(g_{ij})$, we denote by $|g|$ the absolute value of the determinant of the matrix $g_{ij}$ and by $g^{-1}=(g^{ij})$ the inverse tensor. The Laplace--Beltrami operator on functions on $M$ is defined as the divergence of the gradient. In local coordinates we get
\[\Delta_M f=\frac{1}{\sqrt{|g|}}\dv(\sqrt{|g|}g^{-1}\nabla f),\]
where $\nabla f=(\partial_1 f,..., \partial_n f)$ in choosen coordinates.

Using the metric, one defines the volume form $dV_M$, in local coordinates it becomes $dV_M=\sqrt{|g|}dx_1\wedge...\wedge dx^n$. Futher the gradient of a $C^1$ function $f$ on $M$ is a vector field on $M$ locally given by
\[\grad_M f=\sum_{i,j}(g^{ij}\partial_j f)\partial_i.\] 
The following Green formula holds  for functions $f,h\in W_0^{1,2}(M)$
\[\int_M h\Delta_M f dV_M=-\int_M\langle\grad_M f,\grad_M h\rangle_gdV_M.\]

Let $M$ be a compact manifold  without boundary and consider eigenfunctions $\phi_\lambda$ of the Laplace--Beltrami operator, such that $\Delta_M \phi_\lambda+\lambda \phi_\lambda=0.$
Then 
\[\int_M|\grad_M \phi_\lambda|_g^2dV_M=\lambda\int_M|\phi_\lambda|^2dV_M.\]
All eigenvalues of $-\Delta_M$ are real and non-negative, eigenfunctions corresponding to distinct eigenvalues are orthogonal  since
\[\lambda \int_M \phi_\lambda\phi_\mu dV_M=-\int_M(\Delta_M\phi_\lambda)\phi_\mu dV_M=\mu\int_M \phi_\lambda\phi_\mu dV_M.\]
The eigenvalues form an increasing sequence that tends to infinity,
\[0=\lambda_1<\lambda_2\le\lambda_3\le...\le\lambda_n\le... .\]
The first eigenfunction $\phi_0$ is a constant.
There is an orthonormal basis of eigenfunctions for $L^2(M)$. We refer the reader to \cite[Chapter 1]{Ch} for details.

\begin{example} (Dirichlet Laplacian for a domain in $\R^d$.) Instead of a compact manifold we may also consider a bounded domain $\Omega$ in $\R^d$ and the Laplace operator with the Dirichlet boundary condition
\[\Delta \phi+\lambda \phi=0,\quad \phi|_{\partial\Omega}=0.\]
The first eigenvalue is given by the variational formula
\[\lambda_1(\Omega)=\min_{\phi}\int_{\Omega}|\nabla \phi|^2,\]
where the minimum is taken over all functions $\phi\in W^{1,2}_0(\Omega)$ such that $\int_\Omega|\phi|^2=1$. The formula implies that if $\Omega_1\subset\Omega_2$ then
\[\lambda_1(\Omega_1)\ge \lambda_1(\Omega_2).\]
The first eigenfunction does not change sign and can be choosen positive in $\Omega$; all other eigenfunctions are orthogonal to the first one and thus change sign in $\Omega$. Eigenvalues can be determined by the min-max formula
\[
\lambda_k(\Omega)=\min_{A_k}\max_{\phi\in A_k}\frac{\int_{\Omega}|\nabla \phi|^2}{\int_{\Omega}|\phi|^2},\]
where the minimum is taken over all $k$-dimensional subspaces of $W^{1,2}_0(\Omega)$. Alternatively, there is an inductive description of eigenvalues (and eigenfunctions),
\[
\lambda_k(\Omega)=\min_{\phi}\frac{\int_{\Omega}|\nabla \phi|^2}{\int_{\Omega}|\phi|^2},\]
where the minimum is taken over all $\phi\in W^{1,2}_0(\Omega)$ which are orthogonal to the first $k-1$ eigenfunctions $\phi_{\lambda_1},...,\phi_{\lambda_{k-1}}$.

The analog of min-max formulas for the eigenvalues holds for eigenvalues of the Laplace-Beltrami operator on a compact manifold.\end{example}

\subsection{Courant nodal domain theorem} We denote by $Z(\phi)$ the zero set of a function $\phi$,
\[Z(\phi)=\{x: \phi(x)=0\}.\]
The connected components of the compliment $M\setminus Z(\phi)$ are called the nodal domains of the function $\phi$.
 
The simplest example of a compact manifold is the unit circle $\T\simeq[0,2\pi)$. Eigenfunctions of the Laplace operator are $2\pi$-periodic solutions of the eigenvalue problem
\[\phi''+\lambda \phi=0.\]
Solutions  exist when $\lambda=n^2$ for some integer $n$. For $n=0$ the first eigenfunction is a constant; for $n>0$ the corresponding eigenfunctions are linear combinations of $\phi_{n,1}(\theta)=\cos(n\theta)$ and $\phi_{n,2}(\theta)=\sin(n\theta)$. Each of them has $2n$ zeros on the circle. It is not difficult to see that this property is stable, if we change the metric on the circle the  eigenvalues $\lambda_n$ satisfy $\lambda_n\approx cn^2$ and the corresponding $2n$-th and $(2n+1)$st eigenfunctions have exactly $2n$ zeros, dividing the circle into $2n$ nodal intervals.  

The Courant nodal domain theorem gives an upper bound for the number of nodal domains of eigenfunctions on manifolds of arbitrary dimension. Let $M$ be a compact manifold as above and $\phi_{\lambda_n}$ be an eigenfunction of the Laplace-Beltrami operator corresponding to the $n$th smallest eigenvalue. 
\begin{theorem}[Courant]
The number of connected components of $M\setminus Z(\phi_{\lambda_n})$ is at most $n$.
\end{theorem}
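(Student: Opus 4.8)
The plan is to argue by contradiction, using the inductive description of $\lambda_n$ as a minimum of Rayleigh quotients together with the observation that the restriction of $\phi_{\lambda_n}$ to one of its nodal domains is again a Dirichlet eigenfunction there, with the same eigenvalue $\lambda_n$. Suppose $M\setminus Z(\phi_{\lambda_n})$ had at least $n+1$ connected components, and fix $n+1$ of them, $D_1,\dots,D_{n+1}$. Put $u=\phi_{\lambda_n}$ and let $\psi_i$ be the function equal to $u$ on $D_i$ and to $0$ on $M\setminus D_i$. The first step is to check that $\psi_i\in W_0^{1,2}(M)$ and that
\[\int_M|\grad_M\psi_i|_g^2\,dV_M=\lambda_n\int_M|\psi_i|^2\,dV_M.\]
Here lies the only genuinely technical point: nodal domains may have irregular boundary, so rather than integrating by parts directly one approximates $\psi_i$ by its truncations away from the zero set --- which, since $u$ is continuous and vanishes on $\partial D_i$, are compactly supported in $D_i$ --- applies the Green formula with $\Delta_M u+\lambda_n u=0$ on $D_i$, and lets the truncation parameter tend to $0$.

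Next comes a short linear-algebra step. The functions $\psi_1,\dots,\psi_{n+1}$ have pairwise disjoint supports, hence are linearly independent, and imposing $L^2(M)$-orthogonality to the first $n-1$ eigenfunctions $\phi_{\lambda_1},\dots,\phi_{\lambda_{n-1}}$ is only $n-1$ linear conditions on the $(n+1)$-dimensional space they span. The resulting subspace therefore has dimension at least $2$, so inside it one can still find a nonzero
\[\psi=\sum_{i=1}^{n+1}c_i\psi_i\quad\text{with }c_1=0\]
(the functional $c\mapsto c_1$ has nontrivial kernel on a $2$-dimensional space). By the first step and disjointness of supports the Rayleigh quotient of $\psi$ equals $\lambda_n$ exactly, while $\psi$ is orthogonal to $\phi_{\lambda_1},\dots,\phi_{\lambda_{n-1}}$; the inductive characterisation of $\lambda_n$ (in its Laplace--Beltrami version) then says $\lambda_n$ is the least possible value of the Rayleigh quotient over such functions. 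Expanding $\psi$ in an $L^2(M)$ orthonormal eigenbasis, this forces the coefficients of $\psi$ at all eigenvalues strictly larger than $\lambda_n$ to vanish, i.e. $\psi$ is itself an eigenfunction and $\Delta_M\psi+\lambda_n\psi=0$ on all of $M$.

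To conclude, $\psi$ is a nonzero solution of a second order elliptic equation on the connected manifold $M$ that vanishes identically on the nonempty open set $D_1$, which is impossible by the weak unique continuation property (a solution vanishing on an open subset vanishes everywhere). I expect this last ingredient to be the main point of the argument: it is the only step that is not bookkeeping with the variational formulas, it is precisely the qualitative ancestor of the quantitative statements these notes are about, and it is classical --- trivial when $g$ is real-analytic, since then $\psi$ is real-analytic, and due to Aronszajn (and Cordes) in general. Modulo this input and the routine first step, the argument is complete.
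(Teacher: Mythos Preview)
Your argument is correct and is exactly the classical proof of Courant's theorem. The paper does not actually prove the theorem: it refers the reader to Courant--Hilbert and Chavel, remarking only that the proof relies on the weak unique continuation property --- precisely the ingredient you single out as the main point, and your proof is the standard one found in those references.

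One small cosmetic remark: your detour through an $(n+1)$-dimensional span plus the extra condition $c_1=0$ is equivalent to, but slightly more elaborate than, the usual phrasing, which simply takes $n$ of the $\ge n+1$ nodal domains from the start (leaving one, say $D_{n+1}$, aside), imposes the $n-1$ orthogonality conditions on the $n$-dimensional span of $\psi_1,\dots,\psi_n$, and observes that the resulting nonzero $\psi$ vanishes on $D_{n+1}$. The content is identical.
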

We refer the reader to \cite[Chapter 6]{CH} and \cite{Ch} for proof and remark that the proof relies on the weak unique continuation property of solutions of second order elliptic PDEs, which in particular implies that an eigenfunction can not vanish on an open subset of a manifold. The aim of this notes is to give a new quantitative version of this uniqueness result.

\subsection{Further examples} First intuition on the geometry of zero sets of eigenfunctions comes from the pictures of nodal domains on the unit sphere and the standard torus, see \cite{Fig1, Fig2}.
\begin{example}\label{exm:sphere}
 The eigenfunctions on the unit sphere $S^d$ in $\R^{d+1}$ are restrictions of the homogeneous harmonic polynomials which are called spherical harmonics. If $P$ is a polynomial of $d+1$ variables,  $\Delta P=0$ and $P(x)=|x|^nY(x/|x|)$, where $Y$ is a function on $S=S^d$ then
\[ \Delta_SY+n(n+d-1)Y=0.\]
The spherical harmonics form a basis for $L^2(S^d)$ and there are no other eigenfunctions, further details are given in Exercise \ref{ex:sphere}.
\end{example}

\begin{example} Another standard compact manifold on which we can compute eigenfunctions explicitly is the torus. Let $\T^d$ be the $d$-dimensional torus, we identify it with a rectangle $\prod_{j=1}^d[-\pi,\pi ]$ which is glued along each pair of opposite sides. Then we have a basis of eigenfunctions of the form
\[\phi(x)=\exp\left(i\sum_{j=1}^d  n_jx_j\right),\quad \Delta_{\T^d} \phi+\sum_{j=1}^d n_j^2 \phi=0,\]
where $n_j\in\Z$.
\end{example}

We notice that in dimension $d>1$ there are eigenvalues for the Laplace--Beltrami operators on $S^d$ and $\T^d$ with arbitrary large multiplicities.
This  is a source of interesting examples of eigenfunctions. 

The zero sets of standard spherical harmonics and  eigenfunctions on the torus are not difficult to visualize, however, the structure of the zero sets of linear combinations of these functions (corresponding to the same eigenvalue) may be complicated.

\subsection{Bessel functions and Helmholtz equation} One more manifold that we  consider is $\R^d$. It is not compact. We consider bounded solutions of the Helmholtz equation 
\[\Delta \phi+\lambda \phi=0.\]
For $\lambda\le 0$ the maximum principle holds and there are no non-trivial bounded solutions. Thus we are interested in the case $\lambda>0$ and, rescaling the variable, we may assume that $\lambda=1$. 

The Laplace operator in polar coordinates can be written as
\[\Delta \phi=\partial_r^2\phi+\frac{d-1}{r}\partial_r \phi+\frac{1}{r^2}\Delta_S\phi.\] 
We look for solutions of the equation $\Delta\phi+\phi=0$ of the form $\phi(x)=f(|x|)Y(x/|x|)$. Separating the variables, one can check that $Y$ is an eigenfunction of the Laplace--Beltrami operator on the unit sphere. The eigenvalues on the sphere are given in Example \ref{exm:sphere} (see also Exercise \ref{ex:sphere} below). Then we find a family of solutions of the Helmholtz equation of the form
\[\phi(x)=f_n(|x|)Y\left(\frac{x}{|x|}\right),\quad \Delta_SY=-n(n+d-2)Y,\]
where $f_n(r)$ satisfies the following ordinary differential equation
\[r^2f''+(d-1)rf'+(r^2-n(n+d-2))f=0.\]
Writing $f_n(r)=r^{1-d/2}g_n(r)$ we see that $g_n(r)$ satisfies the Bessel equation
\[r^2g''+rg'+(r^2-(n+d/2-1)^2)g=0.\]
This is a second order ODE with analytic coefficients and it has a solution $J_{n+d/2-1}$ called the Bessel function (of the first kind) which is continuous at the origin. The solution is of  the form $J_{n+d/2-1}(r)=r^{n+d/2-1} h_{n+d/2-1}(r)$ where $h_{n+d/2-1}(r)$ is an analytic function of $r$ and $h_{n+d/2-1}(0)\neq0$ (see for example \cite{T}); the second solution has a singularity at $r=0$. Thus we get
\[f_n(r)=r^{1-d/2}J_{n+d/2-1}(r)=r^nh_{n+d/2-1}(r).\]
 We consider positive zeros of $J_{\nu}$ and enumerate them (one can check that they are simple)
\[0<j_{\nu,1}<j_{\nu,2}<... .\]
Using the obtained description of the solutions of the Helmholtz equation, we can compute eigenfunctions and eigenvalues of the Dirichlet Laplace operator for the unit ball in $\R^d$, see Exercise \ref{ex:bessel} below.


\subsection{Yau's conjecture} Examples of eigenfunctions on the torus and sphere show that the number of nodal domains may vary, it is bounded from above as we know from  the Courant nodal domain theorem. At the same time there exist eigenfunctions with large eigenvalues and just two nodal domains (it was noticed already in 1925 in the dissertation of Antonie Stern, see \cite{BH} for historical details and references).

On the other hand, these examples show that nodal lines become more complicated and dense when the eigenvalue grows. We give a proof of a well known result on the density of the zero sets of eigenfunctions in the next section. First we formulate a deep conjecture of Yau \cite{Y}.
\begin{conj}[Yau] Let $M$ be a smooth compact $d$-dimensional Riemannian manifold. There exist constants $C_1$ and $C_2$, which depend on $M$, such that 
\[C_1\sqrt{\lambda}\le \h^{d-1}(Z(\phi_\lambda))\le C_2\sqrt{\lambda},\]
for any eigenfunction $\phi_\lambda$ satisfying $\Delta_M \phi_\lambda+\lambda \phi_\lambda=0$.
\end{conj}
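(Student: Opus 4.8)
The plan is to establish the two bounds separately, both through the \emph{doubling index} of $\phi_\lambda$, which measures local growth. The observation underpinning everything is that $u(x,t)=\phi_\lambda(x)e^{\sqrt\lambda\,t}$ is harmonic on the product $M\times\R$, since $\Delta_{M\times\R}u=(\Delta_M\phi_\lambda+\lambda\phi_\lambda)e^{\sqrt\lambda t}=0$; hence any question about $\phi_\lambda$ becomes a question about a harmonic function of $d+1$ variables, to which the frequency function and propagation of smallness machinery of these notes applies. The first consequence is the bound
\[
N(x,r):=\log\frac{\sup_{B_{2r}(x)}|\phi_\lambda|}{\sup_{B_r(x)}|\phi_\lambda|}\leq C\sqrt\lambda
\]
for all $x\in M$ and all $r$ bounded below (the Donnelly--Fefferman growth estimate): on the natural length scale $1/\sqrt\lambda$, $\phi_\lambda$ behaves like a polynomial of bounded degree.

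For the upper bound I would work in finitely many coordinate charts, partition each into $\sim(\sqrt\lambda)^d$ cubes $Q$ of side $1/\sqrt\lambda$, and bound $\h^{d-1}(Z(\phi_\lambda)\cap Q)$ on each cube by $F(N(Q))\,(1/\sqrt\lambda)^{d-1}$, where $N(Q)$ is the doubling index of $\phi_\lambda$ on a fixed dilate of $Q$ and $F$ is some increasing function; this per-cube estimate comes from the harmonic lift together with a Remez-type propagation of smallness inequality on cubes. Using the almost-monotonicity of the frequency and the global bound $N=O(\sqrt\lambda)$, one then tries to control $\sum_Q F(N(Q))$ by $C\sqrt\lambda$ and conclude $\h^{d-1}(Z(\phi_\lambda))\leq C\sqrt\lambda$.

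For the lower bound the same net of $\sim\lambda^{d/2}$ cubes of side $c/\sqrt\lambda$ is the right device, but one cannot simply count nodal domains, since $\phi_\lambda$ may have as few as two of them. Instead, one uses that $\phi_\lambda$ cannot be too small on a ball of radius $\geq C/\sqrt\lambda$ --- its nodal set is $C/\sqrt\lambda$-dense, by the variational characterisation of $\lambda_1$ and the monotonicity $\lambda_1(\Omega_1)\geq\lambda_1(\Omega_2)$ for $\Omega_1\subset\Omega_2$ --- and, combined with the doubling bound, that $\phi_\lambda$ has a nodal set of $(d-1)$-measure at least $c\lambda^{-(d-1)/2}$ on a definite proportion of the cubes; summing gives $\h^{d-1}(Z(\phi_\lambda))\geq c\sqrt\lambda$. (The sharp version of this is a theorem of Logunov.)

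The step that I expect to be the main obstacle is the passage from the per-cube estimates to the global one with the \emph{sharp} constant $\sqrt\lambda$. For the upper bound this would require $F$ linear with a dimension-free slope, whereas the known nodal-volume-versus-doubling-index bounds give a strictly worse dependence; moreover the doubling index is \emph{not} uniformly bounded over the small cubes and may concentrate so as to be of size $\sim\sqrt\lambda$ on a single cube, so one must control how the doubling indices distribute over cubes at every intermediate scale --- a ``simplex'' or ``hyperplane'' type combinatorial lemma --- and feed that into the local bound. With the presently available tools this only yields a polynomial upper bound $\h^{d-1}(Z(\phi_\lambda))\leq C\lambda^{\alpha(d)}$ with some $\alpha(d)>1/2$; closing the gap to $\alpha(d)=1/2$ is open. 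In the real-analytic case the obstacle disappears: complexifying $\phi_\lambda$, which extends holomorphically to a complex neighbourhood of definite size, one replaces the whole analysis by a Crofton-type integral-geometric count and obtains both inequalities with the sharp exponent, as proved by Donnelly and Fefferman.
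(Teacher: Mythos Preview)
The statement is Yau's \emph{conjecture}; the paper does not prove it and indeed stresses that the sharp upper bound for smooth (non-analytic) metrics remains open. Your proposal is not a proof either, and you are candid about this: you correctly locate the obstruction in the dependence of the per-cube nodal volume on the doubling index, note that only a polynomial upper bound $\h^{d-1}(Z(\phi_\lambda))\le C\lambda^{A_d}$ with $A_d>1/2$ is currently available in the smooth case, and attribute the sharp lower bound to Logunov and the full real-analytic result to Donnelly--Fefferman. In this sense your outline matches the paper's own discussion of the state of the art in Section~\ref{ss:AReal}.

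One refinement concerning the lower bound. The mechanism you sketch --- bounded doubling index on a definite proportion of the wavelength cubes, then Lemma~\ref{l:low} on those cubes --- is the Donnelly--Fefferman real-analytic argument, and the step ``at least half the small cubes have $\n_\phi(q)\le N_0$'' is precisely what is \emph{not} known for smooth metrics. Logunov's proof of the smooth lower bound bypasses this and goes through the harder Theorem~\ref{th:L} (Nadirashvili's conjecture): \emph{every} ball on which the lifted solution vanishes carries nodal set of measure at least $c\,r^{d-1}$, with $c$ independent of the doubling index. Combined with the $C/\sqrt\lambda$-density of $Z(\phi_\lambda)$, this gives a contribution $c\lambda^{-(d-1)/2}$ on \emph{every} cube of the net, not merely a definite proportion, and summing yields $c\sqrt\lambda$. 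So the global doubling bound $\n\le C\sqrt\lambda$ is not the input to the smooth lower bound; the hard input is the uniform, doubling-free nodal-volume lower bound of Theorem~\ref{th:L}.
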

The nodal set of an eigenfunction is a union of smooth hypersurfaces with finite $(d-1)$-dimensional Hausdorff measure. The finiteness of the Hausdorff measure of the nodal set is a non-trivial fact, we refer the reader to \cite{HS} for details.

The Yau conjecture was proved for the case of real analytic metric by Donnelly and Fefferman in 1988, \cite{DF}. We outline some of the ideas in Section \ref{ss:AReal}.

\subsection{Lift of eigenfunctions}
The following lifting trick is used intensively in the study of eigenfunctions. Let $M$ be a $d$-dimensional manifold and $\phi_\lambda$ be an eigenfunction, $\Delta_M \phi_\lambda+\lambda \phi_\lambda=0$,
we define a function on $M'=M\times \R$ by
\[h(x,t)=\phi_\lambda(x)e^{\sqrt{\lambda}t}.\]
Then $\Delta_{M'}h=0$. Locally we think about $h$ as a solution of an elliptic equation in divergence form defined in a subdomain of $\R^{d+1}$.

The first application of the lifting trick is  the proof of the result on the density of the zero sets of eigenfunctions.
\begin{prop}
Suppose that $M$ is a compact Riemannian manifold. There exists $\rho=\rho(M)$ such that for any eigenfunction $\phi_\lambda$ with $\lambda>0$ and any $x\in M$ the distance from $x$ to the zero set $Z(\phi_\lambda)$ is less than $\rho\lambda^{-1/2}$. 
\end{prop}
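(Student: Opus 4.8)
The plan is to use the lifting trick: transfer the statement to the harmonic function $h(x,t)=\phi_\lambda(x)e^{\sqrt\lambda t}$ on $M'=M\times\R$ and apply the Harnack inequality for positive harmonic functions, keeping careful track of the fact that the Harnack constant does not depend on $\lambda$.

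First I would fix $x_0\in M$, set $\delta=\dist(x_0,Z(\phi_\lambda))$, and note that since $\lambda>0$ the eigenfunction $\phi_\lambda$ is orthogonal to the constants, so $\int_M\phi_\lambda\,dV_M=0$; hence $\phi_\lambda$ changes sign, $Z(\phi_\lambda)\neq\emptyset$, and $\delta\le\operatorname{diam} M<\infty$. Since $M$ is compact, $M'=M\times\R$ has bounded geometry; let $r_0=r_0(M)>0$ be a scale below which geodesic balls in $M'$ are comparable to Euclidean balls with uniformly elliptic metric coefficients. Take any $r<\min(\delta,r_0)$. Then $\phi_\lambda$ has no zero on $B_M(x_0,r)$, and after replacing $\phi_\lambda$ by $-\phi_\lambda$ if necessary we may assume $\phi_\lambda>0$ there. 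By the lifting trick $\Delta_{M'}h=0$, and since $e^{\sqrt\lambda t}>0$ we have $\{h>0\}=\{\phi_\lambda>0\}\times\R\supset B_M(x_0,r)\times\R$. As the Riemannian distance on a product satisfies $d_{M'}((x,s),(y,u))^2=d_M(x,y)^2+(s-u)^2$, the ball $B_{M'}((x_0,0),r)$ is contained in $B_M(x_0,r)\times(-r,r)$, so $h$ is a \emph{positive} harmonic function on $B_{M'}((x_0,0),r)$.

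Now I would apply the Harnack inequality (recalled in the Appendix): there is $C_H\ge 1$, depending only on $M$, with
\[\sup_{B_{M'}((x_0,0),r/2)}h\le C_H\inf_{B_{M'}((x_0,0),r/2)}h.\]
The points $(x_0,\pm r/4)$ lie in $B_{M'}((x_0,0),r/2)$ and $h(x_0,\pm r/4)=\phi_\lambda(x_0)e^{\pm\sqrt\lambda r/4}$, so dividing the resulting inequality by $\phi_\lambda(x_0)>0$ gives $e^{\sqrt\lambda r/2}\le C_H$, i.e.
\[r\le 2(\log C_H)\,\lambda^{-1/2}.\]
The only delicate point---and the only place compactness of $M$ is really used---is that $C_H$ may be taken to depend on $M$ alone, uniformly over the center and over all radii below $r_0$. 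I would obtain this by covering $M$ by finitely many coordinate charts; in the corresponding charts on $M'$ the metric coefficients are uniformly elliptic on slightly smaller subsets, so $h$ solves a uniformly elliptic divergence-form equation there, and one takes the maximum of the constants given by Moser's Harnack inequality over these finitely many charts. This is the step I expect to require the most care, though it is standard.

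Finally I would combine regimes. Letting $r\nearrow\min(\delta,r_0)$ yields $\min(\delta,r_0)\le\rho_1\lambda^{-1/2}$ with $\rho_1:=2\log C_H$. If $\delta<r_0$ this is already the desired bound (enlarging $\rho$ slightly makes the inequality strict). If $\delta\ge r_0$, then $r_0\le\rho_1\lambda^{-1/2}$ forces $\lambda\le\Lambda_0:=(\rho_1/r_0)^2$, and then $\delta\le\operatorname{diam} M\le(\operatorname{diam} M)\sqrt{\Lambda_0}\,\lambda^{-1/2}$; so the proposition holds with $\rho=1+\max\{\rho_1,(\operatorname{diam} M)\sqrt{\Lambda_0}\}$. (One could instead avoid the lift and argue by domain monotonicity: a ball contained in $M\setminus Z(\phi_\lambda)$ lies in a single nodal domain whose first Dirichlet eigenvalue equals $\lambda$, while the first Dirichlet eigenvalue of a small geodesic ball of radius $r$ is bounded below by a constant times $r^{-2}$; but the lifting argument is the one suggested by the surrounding text.)
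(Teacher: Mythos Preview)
Your argument is correct and follows essentially the same route as the paper: lift $\phi_\lambda$ to the harmonic function $h(x,t)=\phi_\lambda(x)e^{\sqrt\lambda t}$ on $M\times\R$, apply the Harnack inequality to $h$ on a product region where it is positive, and read off the bound $r\lesssim\lambda^{-1/2}$ from the exponential growth in $t$. Your treatment is in fact more careful than the paper's sketch in a few places---you justify $Z(\phi_\lambda)\neq\emptyset$, you spell out the uniformity of $C_H$ via a finite atlas, and you close the case $\delta\ge r_0$ explicitly---but the underlying idea is identical.
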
  
\begin{proof} Suppose that $\phi_\lambda$ does not change sign in some ball $B_r\subset M$. We assume that $r$ is small enough and consider a chart for $M$ that contains $B_r$. Then the function $h(x,t)=\phi_\lambda(x)exp(\sqrt{\lambda}t)$ is a solution of a  second order elliptic equation in divergence form and $h$  does not change sign in $B_r\times [-r,r]$. By the Harnack inequality, (see Theorem \ref{th:Harnack} below)
\[\sup_{D}|h|\le C(M)\inf_{D}|h|,\]
where $D=B_{r/2}\times[-r/2,r/2]$. On the other hand we have
\[\sup_D|h|=\sup_{B_{r/2}}|\phi_\lambda|\exp(r\sqrt{\lambda}/2)\ge \exp(r\sqrt{\lambda})\inf_{D}|h|.\]
It implies that $r<\rho\lambda^{-1/2}$.
\end{proof}
Clearly, if $h(x,t)=\phi_\lambda(x)\exp(\sqrt{\lambda}t)$ then the zero set of $h$ is the cylinder over $Z(\phi_\lambda)$ and the questions about $Z(\phi_\lambda)$ can be reformulated in terms of $Z(h)$. One of the advantages is that  $h$ is a solution of an elliptic second order PDE in divergence form with no lower order terms.


\subsection{A question of Nadirashvili}
Suppose that $h$ is a harmonic function in the unit disc $\D\subset\R^2$ such that $h(0)=0$. The zero set of $h$ is the union of analytic curves and by the maximum principle it has no loops. We assume that $h(0)=0$ then an elementary geometric argument implies that
\[\h^1(Z(h)\cap\D)\ge 2.\]
Nadirashvili asked whether a higher dimensional version of this statement holds.
\begin{conj}[Nadirashvili] There is a constant $c>0$ such that for any harmonic function $h$ in the unit ball $B$ of $\R^3$ such that $h(0)=0$, the following inequality holds
\[\h^2(Z(h)\cap B)\ge c.\]
\end{conj}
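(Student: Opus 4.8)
The plan is to reduce the inequality to a lower bound on the nodal set in terms of the \emph{doubling index} of $h$, and to use the hypothesis $h(0)=0$ only to bound this index from below. After a scaling I may assume $h$ is harmonic in $B_2\subset\R^3$, $h(0)=0$, $h\not\equiv0$, and $\sup_{B_1}|h|=1$. For $x\in B_1$ and $r\in(0,1/2]$ put
\[
N_h(x,r)=\log_2\frac{\sup_{B_{2r}(x)}|h|}{\sup_{B_r(x)}|h|}.
\]
It is a standard fact (monotonicity of Almgren's frequency function, together with the comparability of the frequency and $N_h$) that $r\mapsto N_h(x,r)$ is, up to a fixed multiplicative constant, nondecreasing, and that $N_h(0,r)\to\mathrm{ord}_0(h)$ as $r\to0$. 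Since $h(0)=0$ gives $\mathrm{ord}_0(h)\ge1$, we obtain $N:=N_h(0,1/2)\ge1$. The statement then follows from the assertion: \emph{there is a universal $c>0$ such that $\h^2(Z(h)\cap B_1)\ge c$ whenever $N_h(0,1/2)\ge1$}. I would prove this by a dichotomy with a large universal threshold $N_0$: a ``bounded index'' case $N\le N_0$, treated by compactness, and a ``large index'' case $N>N_0$, treated by the combinatorics of how the doubling index distributes over subcubes. (Everything below works in $\R^d$ with $2$ replaced by $d-1$.)

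In the bounded case I claim $\h^2(Z(h)\cap B_{1/2})\ge c(N_0)>0$. If not, choose harmonic $h_k$ in $B_2$ with $h_k(0)=0$, $\sup_{B_{3/4}}|h_k|=1$, $N_{h_k}(0,1/2)\le N_0$, and $\h^2(Z(h_k)\cap B_{1/2})\to0$. Interior estimates give a subsequence converging in $C^\infty_{\mathrm{loc}}(B_{3/4})$ to a harmonic $h_\infty$; chaining the doubling inequality a bounded number of times yields $\sup_{B_{1/2}}|h_k|\ge2^{-N_0}$, so $h_\infty\not\equiv0$, while $h_\infty(0)=0$. By the maximum principle $h_\infty$ changes sign near $0$, so $Z(h_\infty)$ is a nonempty real-analytic variety of dimension $2$; pick a relatively compact smooth piece $P\Subset B_{1/2}$ of its regular part, so $\nabla h_\infty\neq0$ on $\overline P$ and $\h^2(P)>0$. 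For large $k$ the implicit function theorem shows $Z(h_k)$ contains a graph over $P$ converging to $P$, whence $\liminf_k\h^2(Z(h_k)\cap B_{1/2})\ge\h^2(P)>0$, a contradiction.

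In the large case the point is that a large doubling index is forced to reappear on many well-separated small subcubes. Fix a cube $Q_0\subset B_1$ centered at $0$ of side comparable to $1$, so its doubling index $N(Q_0)$ (defined with cubes) is at least a fixed multiple of $N$, and subdivide it into $A^3$ equal subcubes for a large universal integer $A$. Two lemmas control the distribution of the index: the \emph{simplex lemma}, which forbids the index from being as large as $\sim N(Q_0)$ simultaneously at the four vertices of a nondegenerate simplex of controlled shape inside $Q_0$ (otherwise a ball containing the simplex would have a definitely larger index, and iterating this over scales is impossible); and the \emph{hyperplane lemma}, which asserts that among the $A^3$ subcubes at most $CA$ of those meeting a given plane can have index at least $\max(N(Q_0),N_0)/(1+c_1)$, so the subcubes carrying almost all of the index spread out three-dimensionally. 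Combining these and iterating over $\sim\log(1/\delta)/\log A$ generations of subdivision shows that $Z(h)$ meets at least $A^{2k}$ of the $A^{3k}$ subcubes of generation $k$; summing the contributions (equivalently, stopping at a generation where the relevant subcubes have index $\le N_0$ and applying the bounded case inside each) gives $\h^2(Z(h)\cap Q_0)\ge c(N_0)$, in fact a quantity increasing with $N(Q_0)$ — a local form of the lower bound in Yau's conjecture. Together with the bounded case this proves $\h^2(Z(h)\cap B_1)\ge c$.

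The main obstacle is the hyperplane lemma, and this is exactly where the techniques of these notes are needed. If too many subcubes clustered along a plane $H$ had large doubling index, then $h$ would be uniformly small on a large portion of $H$ relative to its size on a nearby ball; a propagation-of-smallness estimate with the ``small set'' a piece of $H$ would then force $h$ small on a full three-dimensional neighborhood of $H$, contradicting the doubling hypothesis. The difficulty is that $H$ is a set of Hausdorff dimension exactly $2$ — the codimension-one, borderline case — where the classical Hadamard three-balls argument no longer applies directly; overcoming this is the central analytic point of the notes. The simplex lemma and the multiscale bookkeeping in the iteration are more elementary, but still require care to keep the geometric parameters ($A$, the admissible simplex shapes, $c_1$, $N_0$) mutually compatible across all scales.
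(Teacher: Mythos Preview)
This statement appears in the paper as a \emph{conjecture}, not as a theorem with a proof; the paper says explicitly that the proof, given in \cite{L2}, ``is complicated (and beyond the scope of these lectures).'' So there is no proof in the paper to compare against. Your outline is, in broad strokes, that of \cite{L2}: a bounded-index case via compactness (this is essentially the content of Lemma~\ref{l:low}, which the notes do state), and a large-index case via the simplex/hyperplane combinatorics of \cite{L2}. As a roadmap this is accurate; as a proof it is a sketch in which the hard lemmas are only named.

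There is, however, a genuine misidentification in your last paragraph. You say the hyperplane lemma ``is exactly where the techniques of these notes are needed,'' and that the obstacle --- propagation of smallness from a piece of a hyperplane, ``the codimension-one, borderline case'' --- is ``the central analytic point of the notes.'' It is not. The main result of these notes (Theorem~\ref{th:Main}, Lemma~\ref{l:R}) is propagation of smallness from sets of \emph{positive $d$-dimensional Lebesgue measure}; a hyperplane has measure zero and is simply outside the scope of that theorem. The remark at the end of Section~\ref{s:proof} about ``sets of codimension smaller than one'' refers to a different result in \cite{LM}, and even that does not reach codimension exactly one for the function itself. Moreover, your heuristic for the hyperplane lemma --- ``$h$ small on a large portion of $H$ forces $h$ small on a full three-dimensional neighborhood'' --- is false as stated: the harmonic function $x_3$ vanishes on all of $\{x_3=0\}$ yet is not small nearby. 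What is actually needed is control of Cauchy data (values \emph{and} normal derivative) along $H$, and the mechanism in \cite{L2} that produces and exploits such control is quite different from the positive-measure propagation argument developed here. Finally, the passage from the two lemmas to ``$Z(h)$ meets at least $A^{2k}$ subcubes at generation $k$'' is the combinatorial heart of \cite{L2} and is not the routine induction your sketch suggests; the interplay between where the index drops (hyperplane lemma) and where the bad cubes must cluster (simplex lemma) requires a careful tree construction that you have not supplied.
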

The question was formulated for harmonic functions and remained open for many years. The proof given recently in \cite{L2} is complicated (and beyond the scope of these lectures), it confirms the conjecture for solutions of  second order elliptic equation in divergence form with smooth coefficients.
\begin{theorem}[\cite{L2}]\label{th:L} Suppose that $Lu=\dv(A\nabla u)$ is a uniformly elliptic operator in the unit ball $B\subset\R^d$ with smooth coefficients. There exists a constant $c=c(A)$ such that for any solution of $Lu=0$ with $u(0)=0$ satisfies
\[\h^{d-1}(Z(u)\cap B)\ge c.\]
\end{theorem}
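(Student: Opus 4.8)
The plan is to control the \emph{doubling index} of $u$ under refinement of scales, following the strategy of \cite{L2}. For a non-trivial solution $v$ of $Lv=0$ and a ball $B_r(x)$ with $B_{2r}(x)$ inside the domain of $v$, write $H_v(x,r)=\sup_{B_r(x)}|v|$ and $N_v(x,r)=\log_2(H_v(x,2r)/H_v(x,r))$. Two classical facts about $L$ are used, both valid when $A$ is Lipschitz and a fortiori when $A$ is smooth: weak unique continuation, so that $v\not\equiv0$ vanishes to finite order at every point and all $N_v(x,r)$ are finite; and an \emph{almost-monotonicity} estimate $N_v(x,r)\le C_0 N_v(x,R)+C_0$ for $0<r\le R$ with $B_{2R}(x)$ inside the domain of $v$, together with the three-ball (Hadamard) inequality, the constants $C_0$ depending only on $d$ and on the ellipticity and smoothness of $A$. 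An affine change of variables normalises $A(0)=I$. It suffices to bound $\h^{d-1}(Z(u)\cap Q_0)$ from below for one fixed cube $Q_0\subset B$ centred at $0$ with $\dist(Q_0,\partial B)$ comparable to its side $\ell_0$. Set $N=N_u(0,\ell_0)$; this is finite and positive because $u(0)=0$ and $u\not\equiv0$, and the bound we want must not depend on $N$, since a larger doubling index only makes the nodal set larger. Fix a threshold $N_1=N_1(d,A)$, to be chosen, and split into the cases $N\le N_1$ and $N>N_1$.

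\emph{Base case: bounded doubling index.} Consider the family $\mathcal{F}$ of solutions $v$ of $Lv=0$ on $2Q$, $Q$ a unit cube, with $\sup_Q|v|=1$, a zero in the central half $\tfrac12 Q$, and $N_v(x,1)\le N_1$ for all $x\in\tfrac12 Q$. Interior elliptic estimates make $\mathcal{F}$ precompact in $C^1(Q)$. For each individual $v\in\mathcal{F}$ the strong maximum principle forces a sign change near the zero, and by the structure theory of nodal sets (\cite{HS}) $Z(v)$ is $(d-1)$-rectifiable with locally finite measure, so that $\h^{d-1}(Z(v)\cap\tfrac12 Q)>0$. What must be added is that this quantity does not collapse along $C^1$-convergent sequences in $\mathcal{F}$: the uniform doubling-index bound is precisely what prevents the nodal set from shedding $\h^{d-1}$-measure in the limit. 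One can see this either through semicontinuity arguments for frequency-bounded families, or, in the spirit of these notes, via propagation of smallness --- a sign-changing solution with a negligible nodal set would, by a relative isoperimetric inequality, have one of its two sign regions of very small measure, and propagation of smallness from that small set together with elliptic estimates is incompatible with the normalisation $\sup_Q|v|=1$. One obtains $c_1=c_1(d,A)>0$ with $\h^{d-1}(Z(v)\cap\tfrac12 Q)\ge c_1$ for every $v\in\mathcal{F}$.

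\emph{Iteration: large doubling index.} For $N>N_1$, subdivide $Q_0$ into $K^d$ congruent subcubes of side $\ell_0/K$, with $K=K(d)$ a large integer. The key input is the \emph{simplex lemma}: if the doubling index of $u$ at scale $\ell_0/K$ exceeds $\tfrac12 N$ at the centres of $d+1$ subcubes forming a non-degenerate simplex, then $N_u(0,2\ell_0)\ge c_d N-C_d$, which, once $N_1$ is large enough, contradicts almost-monotonicity from a slightly larger scale; hence the high-index subcubes are confined to a bounded neighbourhood of a hyperplane and number at most $CK^{d-1}$. Therefore a fixed positive fraction of the $K^d$ subcubes have doubling index at most $\tfrac12 N$, and --- this is the heart of the matter --- enough of these good subcubes carry nodal set, because the zero set of $u$ at the parent scale, forced to have definite size once propagated down by the quantitative maximum principle, threads through a number of subcubes that outgrows $K^{d-1}$. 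Letting $m(N)$ be the infimum of $\h^{d-1}(Z(u)\cap Q)/\ell(Q)^{d-1}$ over cubes $Q$ at comparable distance to $\partial B$ that carry a solution with a zero in the central half and doubling index at most $N$ at the centre, one arrives at a recursion of the shape
\[
m(N)\ \ge\ K^{-(d-1)}\cdot\#\{\text{good subcubes}\}\cdot m(N/2)\ \ge\ c(d)\,m(N/2),
\]
with $c(d)\ge1$ by the choice of $K$; together with $m(N_1)\ge c_1$ this yields $m(N)\ge c_1$ for every $N$. Unwinding the reductions gives $\h^{d-1}(Z(u)\cap B)\ge c(A)>0$.

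\emph{Main obstacle.} The genuinely hard part, and the reason \cite{L2} is long, is closing this iteration: one must produce, at each refinement, a number of bounded-index subcubes that actually carry nodal set and that beats the loss $K^{-(d-1)}$ coming from the scaling of $(d-1)$-dimensional measure. A single parent whose nodal set merely crosses about $K^{d-1}$ subcubes would only reproduce, not grow, the measure; the gain must come from the refined picture exhibiting many zeros at once while the exceptional high-index subcubes stay few, which forces the simplex and hyperplane lemmas for the doubling index, the quantitative maximum principle, and propagation of smallness to be balanced against one another so as to rule out the nodal set hiding in a thin slab. I also expect the continuity input in the base case --- that $\h^{d-1}$ of nodal sets does not drop in the limit of a frequency-bounded family --- to require genuine work rather than a soft argument.
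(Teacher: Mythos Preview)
The paper does not prove this theorem; it is quoted from \cite{L2} with the explicit remark that the proof is ``complicated (and beyond the scope of these lectures)''. So there is no argument in these notes to compare against, beyond the base case Lemma~\ref{l:low}, which the paper records as known. Your outline does follow the architecture of \cite{L2} --- a base case for bounded doubling index and an iteration in $N$ via combinatorial control of the doubling index under subdivision --- and your second route to the base case (propagation of smallness forcing both sign regions to have definite volume, then relative isoperimetry) is essentially the content of Lemma~\ref{l:low}. The compactness/semicontinuity route you propose first is not the standard argument and, as you suspect, the lower semicontinuity of $\h^{d-1}(Z(v_n))$ along a $C^1$-convergent sequence is not a soft fact even under a frequency bound.

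The genuine gap is in the iteration, and it is sharper than your ``Main obstacle'' paragraph acknowledges. The sentence ``the zero set of $u$ at the parent scale, forced to have definite size \dots, threads through a number of subcubes that outgrows $K^{d-1}$'' is circular: the definite size of $Z(u)$ is exactly what you are trying to prove. A priori you only know that $u$ changes sign, and a separating-hypersurface count from that yields at most order $K^{d-1}$ subcubes meeting $Z(u)$, which matches the scaling loss $K^{-(d-1)}$ and closes the recursion with $c(d)\ge 1$ \emph{only if none of those subcubes has high index}. But high doubling index correlates with proximity to the nodal set --- the dangerous scenario is precisely that $Z(u)$ sits entirely inside the thin slab of high-index subcubes that the simplex lemma produces, and nothing in your sketch excludes this. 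Excluding it is the substance of \cite{L2}: one needs the hyperplane lemma in its quantitative form (a fixed multiplicative overshoot of the parent doubling index when an entire hyperplane layer of subcubes is bad) together with careful combinatorial bookkeeping to guarantee that enough \emph{low}-index subcubes are actually crossed by $Z(u)$. You name the ingredients but do not supply the mechanism that makes them interact, and without it the recursion does not close.
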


In was also shown in \cite{L2} that this theorem implies the lower bound in Yau's conjecture on compact Riemannian manifolds with smooth metric. A polynomial upper bound
\[\h^{d-1}(Z(\phi_\lambda))\le C\lambda^{A_d},\]
where $A_d$ depends only on the dimension of the manifold and $C$ depends on the manifold and the metric was obtained in  \cite{L1}. 


\subsection{Exercises}
\begin{exercise}[Harnack inequality]\ Let $L=div(A\nabla\cdot)$ be a uniformly elliptic operator with bounded coefficients.  Use the Harnack inequality (Theorem \ref{th:Harnack}) to prove the following statements.\\
a) If $u$ is a bounded solution of $Lu=0$ in $\R^d$ then $u$ is a constant.\\
b) Suppose that $Lu+cu=0$, $c\in\R$ and $u$ is positive in the cylinder 
\[{\mathcal{C}_1}=\{x=(x_1,...,x_d)\in\R^d: x_1^2+...+x_{d-1}^2\le 1\}.\]
Let further $M(R)=\max\{u(x): x\in{\mathcal C_{1/2}}, |x_d|\le R\}$,
where
\[{\mathcal{C}}_ {1/2}=\{x=(x_1,...,x_d)\in\R^d: x_1^2+...+x_{d-1}^2\le 1/4\}.\] Then there exists $C$ such that $M(R)\le u(0)e^{CR}$. 
\end{exercise}

\begin{exercise}\label{ex:first}  Suppose that $\Delta_M u+\lambda u=0$ and $\Omega$ is a connected component of $M\setminus Z(u)$. Assume that $\Omega$ is a domain with piece-wise smooth boundary and prove that the first Dirichlet Laplace eigenvalue of $\Omega$ is
\[\lambda_1(\Omega)=\lambda.\]
{\it{Remark:}} Careful details can be found in \cite{Che}.
\end{exercise}

\begin{exercise}[Harmonic polynomials]\label{ex:sphere} 
The restrictions of homogeneous harmonic polynomials on the unit sphere $S\subset\R^{d+1}$, called spherical harmonics,  are  the eigenfunctions of  the Laplace--Beltrami operator. We denote by $E_{n,d}$ the eigenspace that corresponds to the eigenvalue $\lambda=n(n+d-1)$. If $Y\in E_{n,d}$ then $P(x)=|x|^nY(x/|x|)$ is a harmonic function.\\
a) Apply the Green formula in $\R^d$ to show that if $Y_n\in E_{n,d}$ and $Y_m\in E_{m,d}$ with $n\neq m$ then
\[\int_S Y_nY_m=0.\]
b) Consider the following inner product on the space $\p_{n,d}$ of homogeneous polynomials of degree $n$,
\[[P,Q]=P(D)(Q)=\sum_{|\alpha|=n}\alpha ! P_\alpha Q_\alpha,\]
where $P(x)=\sum_{|\alpha|=n} P_\alpha x^\alpha,\ Q(x)=\sum_{|\alpha|=n} Q_\alpha x^\alpha.$

Show that the space of harmonic polynomials $\h_{n,d}\subset \p_{n,d}$ is the orthogonal compliment of \[\Q_{n,d}=\{P\in \p_{n,d}: P(x)=|x|^2P_1(x), P_1\in \p_{n-2,d}\}\] with respect to this inner product.
\\
c) Show that any homogeneous polynomial $F$  of degree $n$ in $\R^d$ can be written as
\[F(x)=H_n(x)+|x|^2H_{n-2}(x)+...|x|^{2k} H_{n-2k},\]
where $k=[n/2]$ and $H_j$ is a homogeneous harmonic polynomial of degree $j$.\\
{\it{Remark:}} This implies that spherical harmonics form a basis for $L^2(S)$ and there no other eigenfunctions.\\
d) Deduce that if $Y\in \h_{n,d}$ and $F$ is a polynomial of degree less that $n$ than $\int_S YF=0$.\\
e) Suppose that  $P(x)\in\h_{n,d}$ and   $Q$ is a factor of $P$, $P=QF$ for some polynomial $F$. Show that $Q$ changes sign in $\R^d$.
\end{exercise}

\begin{exercise}[Dirichlet eigenfunctions for balls]\label{ex:bessel} \ \\
Let $J_{n}$ be the Bessel function such that 
\[u(re^{i\theta})=J_{n}(r)(a\cos n\theta+b\sin n\theta)\] satisfies 
$\Delta u+u=0\ {\text{in}}\ \R^2,$
i.e., $J_{n}$ is a solution of the second order ODE
\[r^2J''+rJ+(r^2-n^2)J=0.\]
Further, let $0<j_{n,1}<j_{n,2}<...$ be the positive zeros of $J_{n}$.\\
a) Show that there is a constant $c$  such that  $n\le j_{n,1}\le cn$. (Hint: you may use the equation for the lower bound and the density of zero sets of eigenfunctions for the upper bound.)\\
b) Show that the following functions 
\[\phi(re^{i\theta})=J_{n}(j_{n,k}r)(a\cos n\theta+b\sin n\theta)\]
are eigenfunctions of the Dirichlet Laplacian on the unit ball of $\R^2$, and that  the smallest eigenvalue is $j^2_{0,1}$.\\
{\it{Remark 1:}} A classical and deep result of Siegel implies that two distinct Bessel functions $J_n$ and $J_m$ with integer $n$ and $m$ have no common zeros and thus all eigenvalues of a disk are simple.\\
{\it{Remark 2:}} Let $\lambda_{d,k}$ be the $k$th  eigenvalue of the Dirichlet Laplace operator on the unit ball $B_0\subset\R^d$. Suppose that $M$ is a smooth $d$-dimensional Riemannian manifold, $x\in M$ and let $B=B(x,r)$ be the ball on $M$ of radius $r$ and center $x$. Let $\lambda_k(B)$ be the $k$th eigenvalue of the Dirichlet Laplace-Beltrami operator for $B$. Then one can show that (see \cite{Ch})
$\lambda_k(B)\sim r^{-2}\lambda_{d,k},\quad r\to 0.$
\end{exercise}

\begin{exercise}[Yau's conjecture] \ \\
Prove the lower bound $\h^{d-1}(Z(u))\ge c\sqrt{\lambda}$ in the Yau conjecture in dimensions one and two. \\
{\it{Hint:}} for the case $d=2$ use Exercise \ref{ex:first}, the inequality 
$\lambda_1(\Omega_1)\ge \lambda_1(\Omega_2)$ for $\Omega_1\subset\Omega_2,$ and Remark 2 above.
\end{exercise}


\section{Doubling index and frequency function}\label{s:di}
An important tool to study nodal sets of eigenfunctions and growth properties of solutions of elliptic PDEs is the so called frequency function. The idea goes back to works of Almgren \cite{Al} and  Agmon \cite{Ag}. It was developed further by Garofalo and Lin \cite{GL}, see also \cite{K} and \cite{Man}. 


\subsection{Frequency function}
Let $A(x)$ be a symmetric uniformly elliptic matrix with Lipschitz coefficients defined on some ball $B_r$ centered at the origin and such that $A(0)=I$. Let further
\[\mu(x)=\frac{(A(x)x,x)}{|x|^2},\quad \mu(0)=1,\quad \Lambda^{-1}\le \mu(x)\le\Lambda.\]
Moreover, since $A$ has Lipschitz coefficients, we have $A(x)=I+O(|x|)$ and $\mu(x)=1+O(|x|)$.

Let $u$ be a solution to the equation $\dv(A(x)\nabla u(x))=0$. We consider weighted averages of $|u|^2$ over spheres:
\[H(r)=r^{1-d}\int_{\partial B_r}\mu(x)|u(x)|^2ds(x).\]
Denoting by $\nu=x/|x|$ the unit outer normal vector for the sphere and applying  the divergence theorem, we obtain
\[H(r)=r^{-d}\int_{\partial B_r}(|u|^2A(x)x,\nu)ds=r^{-d}\int_{B_r}\dv(|u|^2A(x)x).\]
For the case of the Laplace operator, $A=I$ and $\mu(x)=1$, the function $t\mapsto H(e^t)$ is convex, i.e.,
\[H(r)\le H(r_1)^\alpha H(r_2)^{1-\alpha},\quad {\text{when}}\quad r=r_1^\alpha r_2^{1-\alpha},\ \alpha\in(0,1).\] 
This can be proved either using decomposition of harmonic functions in spherical harmonics or by integration by parts as below, the computations are slightly simplified in this case, see \cite{H}. Similar property was discovered for solution of elliptic equations in \cite{GL}, we will provide a calculation that is a small variation of the one in \cite{K}.

First we compute the derivative of $H$,
\begin{equation}\label{eq:H1}
H'(r)=-dr^{-1}H(r)+r^{-d}\int_{\partial B_r}\dv(|u|^2A(x)x).
\end{equation}
We rewrite the second term as
\begin{multline*}\int_{\partial B_r}\dv(|u|^2A(x)x)=\\
\int_{\partial B_r}2u(\nabla u, A(x)x)+\int_{\partial B_r}|u|^2{\rm{trace}}(A(x))+\int_{\partial B_r}|u|^2A_D(x),
\end{multline*}
where $A_D(x)=\sum_{i,j}(\partial_i a_{ij})x_j$. We also note that 
\[\mu(x)=1+O(|x|),\ {\rm{trace}}(A)=d+O(|x|),\ A_D(x)=O(|x|).\]
This implies
\begin{equation}\label{eq:div}
\int_{\partial B_r}\dv(|u|^2A(x)x)=\int_{\partial B_r}2u(\nabla u, A(x)x)+d\int_{\partial B_r}|u|^2\mu(x)+O(r^dH(r)).
\end{equation}
We rewrite the first integral in the right-hand side of the last identity using the symmetry of $A$ and then apply the divergence theorem once again,
\[\int_{\partial B_r}2u(\nabla u, A(x)x)=\int_{\partial B_r}2u(A(x)\nabla u, x)=
2r\int_{B_r}\dv(uA(x)\nabla u).\]
Next, using the equation $\dv(A\nabla u)=0$, we obtain
\begin{equation}\label{eq:dvA}
 \int_{\partial B_r}2u(\nabla u, A(x)x)=2r\int_{B_r} (A(x)\nabla u,\nabla u).
\end{equation}
Finally, combining (\ref{eq:H1}), (\ref{eq:div}), and (\ref{eq:dvA}), we get
\[H'(r)=2r^{1-d}\int_{B_r}(A\nabla u,\nabla u) +O(H(r)).\]
Following \cite{GL} and \cite{K}, we define
\[I(r)=r^{1-d}\int_{B_r}(A\nabla u, \nabla u)=r^{-d}\int_{\partial B_r}(uA\nabla u,x),\quad N(r)=\frac{rI(r)}{H(r)}.\]
Then 
\begin{equation}\label{eq:H'}
H'(r)=2I(r)+O(H(r)),\quad N(r)=\frac{rH'}{2H}+O(1).
\end{equation}

\begin{prop}\label{pr:mon}
There exists $C$  that depends only on the ellipticity and Lipschitz constants of the operator such that
for any solution $u$ to $\dv(A\nabla u)=0$, the function $e^{Cr}N(r)$ is an increasing function of $r$. 
\end{prop}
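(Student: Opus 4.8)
The plan is to show $N'(r)/N(r)\ge -C$ for $r$ in the domain of definition; since $N\ge 0$ (and we may assume $H(r),I(r)>0$ on that interval, the statement being trivial otherwise), this yields $\frac{d}{dr}\bigl(e^{Cr}N(r)\bigr)=e^{Cr}N(r)\bigl(C+N'/N\bigr)\ge 0$. From $N=rI/H$,
\[
\frac{N'}{N}=\frac1r+\frac{I'}{I}-\frac{H'}{H},
\]
and (\ref{eq:H'}) already gives $H'/H=2I/H+O(1)$, so the whole problem reduces to producing a usable formula for $I'(r)$.

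Differentiating $I(r)=r^{1-d}\int_{B_r}(A\nabla u,\nabla u)$ produces a boundary term $r^{1-d}\int_{\partial B_r}(A\nabla u,\nabla u)\,ds$, and the task is to convert it into something comparable to $H$ and $I$. For this I would invoke a Rellich--Ne\v{c}as (Pohozaev-type) identity: apply the divergence theorem to the vector field
\[
(A\nabla u,\nabla u)F-2(F,\nabla u)\,A\nabla u, \qquad F(x)=\frac{A(x)x}{\mu(x)}.
\]
The reason for this adapted $F$ is that on $\partial B_r$ one has $(F,\nu)=|x|=r$ and $(F,\nabla u)=\mu^{-1}(A\nabla u,x)$, so the ``$-2(F,\nabla u)A\nabla u$'' part contributes exactly $2r\int_{\partial B_r}\mu^{-1}(A\nabla u,\nu)^2\,ds$ --- precisely the object that the Cauchy--Schwarz step below will match; meanwhile, using $\dv(A\nabla u)=0$, the symmetry of $A$, and the Lipschitz facts $A=I+O(|x|)$, $\mu=1+O(|x|)$ (so that $\dv F=d+O(|x|)$ and $\nabla F=\mu^{-1}A+O(|x|)$), every other term of the identity is a volume integral over $B_r$ carrying an extra factor $O(|x|)$, hence contributes only $O(I(r))$ to $I'(r)$. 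The outcome is
\[
I'(r)=2J(r)-\frac{I(r)}{r}+O(I(r)), \qquad J(r):=r^{1-d}\int_{\partial B_r}\frac{(A\nabla u(x),x)^2}{\mu(x)\,|x|^2}\,ds(x).
\]

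Feeding this into the formula for $N'/N$, the two $1/r$ contributions cancel and one is left with $N'/N=\frac{2}{I}\bigl(J-\tfrac{I^2}{H}\bigr)+O(1)$. It remains to show $J\ge I^2/H$. Recalling $I(r)=r^{1-d}\int_{\partial B_r}u\,(A\nabla u,\nu)\,ds$, I would write the integrand as $\bigl(\sqrt{\mu}\,u\bigr)\cdot\bigl(\mu^{-1/2}(A\nabla u,\nu)\bigr)$ and apply the Cauchy--Schwarz inequality; this gives $I(r)^2\le H(r)\,J(r)$, whence $J-I^2/H\ge 0$ and $N'/N\ge -C$.

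The routine ingredients are the differentiation in $r$ (legitimate since $u\in C^{1,\alpha}_{\mathrm{loc}}$ by interior regularity), the bookkeeping of the $O(|x|)$ errors, and the Cauchy--Schwarz step. The crux is the Rellich--Ne\v{c}as identity and, inside it, the choice $F=A(x)x/\mu(x)$: with the naive field $F=x$ the corresponding boundary term differs from $2r^{d}J(r)$ by $\int_{\partial B_r}\mu^{-1}\bigl((A-\mu I)\nu,\nabla_T u\bigr)(A\nabla u,\nu)\,ds$, which one can only estimate by $O(r)\int_{\partial B_r}|\nabla u|^2\,ds$ --- a boundary quantity of the size of $I'(r)$ rather than $I(r)$, and hence not absorbable into the error.
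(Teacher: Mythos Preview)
Your proposal is correct and follows essentially the same route as the paper. The paper computes $(rI(r))'$ via the divergence theorem with the vector field $w(x)=\mu(x)^{-1}A(x)x$ (exactly your $F$), obtains $(rI(r))'=2r^{-d}\int_{\partial B_r}\mu^{-1}(A\nabla u,x)^2+O(rI(r))$, and then concludes $N'/N\ge -C$ by the same Cauchy--Schwarz step $I^2\le HJ$; your explicit remark on why the naive choice $F=x$ fails is a helpful addition not spelled out in the paper.
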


\begin{proof}
We compute $N'(r)$, taking into account that the first derivatives of the coefficients of $A$ are bounded. We already know that
\[H'(r)=2I(r)+O(H(r)).\]
Next we estimate $(rI(r))'$. Let $w$ be a vector field in $B_r$ such that $(w,x)=r^2$ on $\partial B_r$. Then
\begin{align}(rI(r))'&=(2-d)I(r)+r^{2-d}\int_{\partial B_r}(A\nabla u,\nabla u)\nonumber\\
&=(2-d)I(r)+r^{1-d}\int_{B_r}\dv(w(A\nabla u,\nabla u))\label{eq:I1}\\&= (2-d)I(r)+r^{1-d}\int_{B_r}\dv(w)(A\nabla u,\nabla u)+r^{1-d}\int_{B_r}(w,\nabla(A\nabla u,\nabla u))\nonumber.\end{align}
We used the divergence theorem in the first equality above.
To simplify the last term we note that
\begin{equation}\label{eq:In}
(w,\nabla(A\nabla u,\nabla u))=2(w,{\rm{Hess}}(u)(A\nabla u))+(A_{D,w}\nabla u,\nabla u),
\end{equation}
where $A_{D,w}(x)=\{\sum_{k}(\partial_k a_{ij})w_k\}_{i,j}$.
Further, the Hessian is a symmetric matrix and 
\[{\rm{Hess}}(u)(w)=\nabla(\nabla u,w)-(Dw)\nabla u.\]
Thus,  we obtain,
\begin{align}
\int_{B_r}(Hess(u)w,A\nabla u)&=
\int_{B_r}(\nabla(\nabla u,w),A\nabla u)-\int_{B_r}((Dw)\nabla u,A\nabla u)\nonumber\\
&=\int_{B_r}\dv((\nabla u,w)A\nabla u)-\int_{B_r}((Dw)\nabla u,A\nabla u)\label{eq:I2}
\\&=r^{-1}\int_{\partial B_r}(\nabla u, w)(A\nabla u,x)-\int_{B_r}((Dw)\nabla u,A\nabla u).\nonumber
\end{align}
We used the equation in the second identity and the divergence theorem in the third.

Now we choose $w(x)=\mu(x)^{-1}A(x)x$. Then \[(w(x),x)=|x|^2,\quad Dw=I+O(|x|),\quad \dv (w)=d+O(|x|).\]
We proceed to work with  (\ref{eq:I2}) and rewrite the first term as
\[\int_{\partial B_r}(\nabla u,w)(A\nabla u,x)=\int_{\partial B_r}\mu(x)^{-1}(A\nabla u,x)^2.\]
Combining the second term in \eqref{eq:I1} and the second term in \eqref{eq:I2} and the asymptotic for $Dw$ and $\dv(w)$, we get
\[r^{1-d}\int_{B_r}\dv(w)(A\nabla u, \nabla u)-2r^{1-d}\int_{B_r}((Dw)\nabla u, A\nabla u)=(d-2)I(r)+O(rI(r)).\]
Moreover, we have
\[r^{1-d}\int_{B_r}|(A_{D,w}\nabla u,\nabla u)|\le C r^{1-d}\int_{B_r} r|\nabla u|^2=O(rI(r)),\]
where $C$ depends on the ellipticity and Lipschitz constants of $A$ and on the dimension.
Now \eqref{eq:I1}, \eqref{eq:In}, \eqref{eq:I2} and the last two inequalities imply
\[
(rI(r))'=2r^{-d}\int_{\partial B_r}\mu(x)^{-1}(A\nabla u,x)^2+O(rI(r)).
\]
Finally, the last inequality and \eqref{eq:H'} give
\begin{multline*}
N'(r)(N(r))^{-1}=(rI(r))'(rI(r))^{-1}-(H'(r))(H(r))^{-1}\\
=\frac{2r^{-2d}}{I(r)H(r)}\left(\int_{\partial B_r}\frac{(A\nabla u,x)^2}{\mu(x)}\int_{\partial B_r}\mu(x)|u|^2-\left(\int_{\partial B_r}(uA\nabla u, x)\right)^2\right)+O(1).
\end{multline*}
The first term is positive by the Cauchy-Schwarz inequality. Therefore \[N'(r)\ge -CN(r)\]  and the proposition follows.
\end{proof}

\begin{corollary}\label{cor:Double} Suppose that $u$ is a solution to the equation $\dv(A(x)\nabla u(x))=0$ in $B_{R_0}$, where $A(x)=I+O(x)$ as above and $R<R_0/2$. Then there exists $D_N$ that depends on $R_0$, $N(R)$, the ellipticity and Lipschitz constants of the operator, and  the dimension of the space, such that
\[\int_{B_{2r}}|u|^2\le D_N\int_{B_r}|u|^2\]
for any $r\in(0,R)$.
\end{corollary}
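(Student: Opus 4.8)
The plan is to pass from the solid $L^2$-norms to the spherical averages $H$ and then use the monotonicity of the frequency (Proposition \ref{pr:mon}) to control how fast $H$ can grow. First, since $\Lambda^{-1}\le\mu(x)\le\Lambda$, one has $\Lambda^{-1}\int_{\partial B_t}|u|^2\le t^{d-1}H(t)\le\Lambda\int_{\partial B_t}|u|^2$, so integrating in $t$ gives $\Lambda^{-1}\int_{B_\rho}|u|^2\le\int_0^\rho t^{d-1}H(t)\,dt\le\Lambda\int_{B_\rho}|u|^2$. Hence it is enough to prove $\int_0^{2r}t^{d-1}H(t)\,dt\le D\int_0^r t^{d-1}H(t)\,dt$ for a suitable $D$, and then $D_N=\Lambda^2 D$ works. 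We may assume $u\not\equiv 0$ (otherwise the statement is trivial); by the weak unique continuation property together with the monotonicity formula, $H(t)>0$ for $0<t<R_0$, the frequency $N(t)$ is well defined there, and $N(t)\ge 0$ since $I(t)\ge 0$ by ellipticity.

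Next I would use \eqref{eq:H'}, which says $N(t)=\frac{tH'(t)}{2H(t)}+O(1)$; that is, $(\log H)'(t)=\frac{2N(t)}{t}+e(t)$ with $|e(t)|\le C_0$ for a constant $C_0$ depending only on the ellipticity and Lipschitz constants and the dimension. Integrating, for $0<a\le b<R_0$,
\[
\log\frac{H(b)}{H(a)}=\int_a^b\Bigl(\frac{2N(t)}{t}+e(t)\Bigr)dt,
\]
hence $H(b)\le H(a)\exp\bigl(\int_a^b\frac{2N(t)}{t}\,dt+C_0(b-a)\bigr)$, while $H(b)\ge H(a)e^{-C_0(b-a)}$ because $N\ge0$. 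Now fix $r\in(0,R)$; then every radius used below lies in $(0,2R)\subset(0,R_0)$. By Proposition \ref{pr:mon} the function $t\mapsto e^{Ct}N(t)$ is increasing on $(0,R_0)$, and since $2R<R_0$ this makes $N$ bounded on $(0,2R)$ by a constant $N_0$ (for $t\le R$ one may take $N_0=e^{CR}N(R)$, and in general $N_0$ is controlled by $N(R)$ and $R_0$). Applying the two inequalities above with $a$ ranging over $[r/2,r]$ and $b$ over $[r,2r]$ — so $b-a\le\frac32 R_0$ and $\int_a^b\frac{2N}{t}\,dt\le 2N_0\log 4$ — yields $c_1\le H(t)/H(r/2)\le C_1$ for all $t\in[r/2,2r]$, where $c_1,C_1>0$ depend only on $N_0$, $R_0$, the ellipticity and Lipschitz constants and the dimension.

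It then remains to assemble the pieces: since $t^{d-1}\le(2r)^{d-1}$ on $[r,2r]$ and $H\le C_1H(r/2)$ there, $\int_r^{2r}t^{d-1}H(t)\,dt\le (2r)^{d-1}r\,C_1H(r/2)$, whereas $\int_0^r t^{d-1}H(t)\,dt\ge\int_{r/2}^r t^{d-1}H(t)\,dt\ge(r/2)^d\,c_1H(r/2)$. Dividing, $\int_r^{2r}t^{d-1}H\le 2^{2d-1}(C_1/c_1)\int_0^r t^{d-1}H$, so $\int_0^{2r}t^{d-1}H=\int_0^r t^{d-1}H+\int_r^{2r}t^{d-1}H\le\bigl(1+2^{2d-1}C_1/c_1\bigr)\int_0^r t^{d-1}H$, which is the required bound. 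I do not expect a genuine obstacle: the substantive input, the monotonicity of $e^{Cr}N(r)$, is already available, and the Corollary is essentially bookkeeping. The only points needing a little care are that $H$ stays strictly positive (handled by weak unique continuation), that the $O(1)$ error in $(\log H)'$ is harmless because it is integrated only over intervals of length $O(R_0)$, and the fact that the doubling constant genuinely depends on the frequency up to scale $2R$ rather than exactly at $R$, which is reconciled through the monotonicity of $e^{Ct}N(t)$.
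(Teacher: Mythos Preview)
Your argument is correct and follows essentially the same route as the paper: bound $(\log H)'$ via \eqref{eq:H'} and the almost-monotonicity of $N$, then pass from the spherical quantity $H$ to solid $L^2$-norms using $\Lambda^{-1}\le\mu\le\Lambda$. The paper packages the middle step slightly more cleanly by first proving the sphere doubling $H(2\rho)\le C_N H(\rho)$ and then integrating in $\rho$, whereas you establish two-sided comparability of $H$ on $[r/2,2r]$ and compare the solid integrals directly; these are the same computation in different order. One small caveat: your claim that monotonicity of $e^{Ct}N(t)$ bounds $N(t)$ for $t\in(R,2R)$ in terms of $N(R)$ goes the wrong way (monotonicity gives only a \emph{lower} bound there), so strictly the constant depends on $N$ at scale $2R$; the paper's proof has the same feature, and in practice one simply states the dependence at the larger scale or restricts to $r<R/2$.
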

\begin{proof} For any $r<R<R_0/2$ we write \eqref{eq:H'} and apply the proposition
\[H'(r)\le 2I(r)+cH(r)=(2r^{-1}N(r)+c)H(r)\le (2r^{-1}N(R)e^{C(R-r)}+c)H(r).\]
Integrating $H'(r)/H(r)$ over an interval $[\rho, 2\rho]$ we get
\[\int_{\partial B_{2\rho}}\mu(x)|u(x)|^2ds(x)\le C_N \int_{\partial B_{\rho}}\mu(x)|u(x)|^2ds(x),\]
where $C_N=\exp(C_1+C_2N(R))$ with $C_2=C_2(R)$. Finally, integrating the inequality with respect to $\rho$ from $0$ to $r$, and using that  $\Lambda^{-1}\le\mu\le \Lambda$ we obtain
the required estimate.
\end{proof}


\subsection{Three spheres theorem for elliptic PDEs}
Another consequence of the monotonicity of the frequency function is the so called three sphere theorem. Its simplest version is the classical Hadamard three circle theorem for analytic functions (the classical proof is based on the fact that the logarithm of the modulus of an analytic function is subharmonic). It turns out that even without analyticity a version of the Hadamard inequality holds for harmonic functions and more generally solutions to uniformly elliptic equations. One of the first general results is due to Landis \cite{L63}.
We derive the three spheres from  the properties of  the frequency function following \cite{GL}.  
First,  Proposition \ref{pr:mon} implies the inequality $e^{Cr}N(2r)\ge N(r)$, which, combined with \eqref{eq:H'}, gives
\[\frac{rH'(r)}{H(r)}\le \left(c+\frac{2r H'(2r)}{H(2r)}\right)e^{Cr}.\]
Then integrating from $r$ to $2r$ with respect to $dr/r$ we obtain
\begin{equation}\label{eq:log}
\log H(2r)-\log H(r)\le (c\log 2+\log H(4r)-\log H(2r))e^{2Cr}.
\end{equation}

\begin{prop}\label{pr:3sph}
Assume that $L=\dv(A\nabla\cdot)$ is a uniformly elliptic operator, $A$ is symmetric and has Lipschitz entries in a domain $\Omega$. Suppose also that  $A(0)=I$ and $B(0,4r)\subset\Omega$. There exist   $\alpha>0$ and  $C>0$ such that for any solution $u$ of $Lu=0$ 
\[\int_{\partial B_{2r}} |u|^2\le C\left(\int_{\partial B_r}|u|^2\right)^{\alpha}\left(\int_{\partial B_{4r}}|u|^2\right)^{1-\alpha}.\]
\end{prop}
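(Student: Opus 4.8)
The plan is to feed the already-derived inequality \eqref{eq:log} through elementary algebra to obtain a genuine log-convexity estimate for the weighted spherical average $H$, and then to trade $H$ for $\int_{\partial B_\rho}|u|^2$ using the pinching $\Lambda^{-1}\le\mu\le\Lambda$. A preliminary remark: if $u\equiv0$ the inequality is trivial, and otherwise $H(\rho)>0$ for all $\rho\le 4r$, since $H(\rho)=0$ would force the trace of $u$ on $\partial B_\rho$ to vanish, hence $u\equiv0$ on $B_\rho$ (zero Dirichlet data) and then on $\Omega$ by weak unique continuation; so all the logarithms below make sense.

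Set $\beta=e^{2Cr}\ge1$. Collecting the $\log H(2r)$ terms, \eqref{eq:log} becomes $(1+\beta)\log H(2r)\le\log H(r)+\beta\log H(4r)+\beta c\log2$; dividing by $1+\beta$ and writing $\alpha:=(1+\beta)^{-1}=(1+e^{2Cr})^{-1}$, so that $1-\alpha=\beta/(1+\beta)$, we get
\[
\log H(2r)\le\alpha\log H(r)+(1-\alpha)\log H(4r)+(1-\alpha)\,c\log2,
\]
and exponentiating,
\[
H(2r)\le 2^{c}\,H(r)^{\alpha}\,H(4r)^{1-\alpha}.
\]
Finally I would substitute $H(\rho)=\rho^{1-d}\int_{\partial B_\rho}\mu|u|^2$, bounding $\mu$ from below in $H(2r)$ and from above in $H(r)$ and $H(4r)$; the powers of $r$ that appear then sum to $(d-1)+\alpha(1-d)+(1-\alpha)(1-d)=0$ and cancel, leaving exactly the claimed inequality with a constant $C$ depending only on $d$, $\Lambda$ and the constant $c$ of \eqref{eq:log}, i.e.\ only on the ellipticity and Lipschitz constants (and not on $r$).

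None of this is really hard; the one point I would slow down on is the exponent $\alpha$. Because the monotone quantity in Proposition \ref{pr:mon} is $e^{Cr}N(r)$ rather than $N(r)$ itself, $\alpha=(1+e^{2Cr})^{-1}$ depends on $r$: it tends to the sharp value $1/2$ as $r\to0$ (recovering the harmonic three-circles theorem) and decreases as $r$ grows. For the statement as phrased, with $r$ fixed, this is harmless; if one wants a single $\alpha$ valid over a range $r\le R_0$ one takes $\alpha_0=(1+e^{2CR_0})^{-1}$ and absorbs the discrepancy via a cheap doubling bound $H(r)\le C\,H(4r)$, itself another consequence of Proposition \ref{pr:mon}. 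I would also re-check the passage that produced \eqref{eq:log}, to confirm that $c$ was chosen large enough that the integrand generating it is nonnegative, so that replacing $e^{Ct}$ by $e^{2Cr}$ on $[r,2r]$ is legitimate and the inequality points in the stated direction.
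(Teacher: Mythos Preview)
Your proof is correct and follows exactly the paper's approach: rearrange \eqref{eq:log} into a convex combination of $\log H(r)$ and $\log H(4r)$, exponentiate, and then replace $\mu$ by constants via $\Lambda^{-1}\le\mu\le\Lambda$. You supply more detail than the paper does (the unique-continuation remark ensuring $H>0$, the explicit cancellation of the $r^{1-d}$ factors, and the sign check needed to replace $e^{Ct}$ by $e^{2Cr}$ under the integral), and your $\alpha=(1+e^{2Cr})^{-1}$ differs harmlessly from the paper's $(1+e^{4Cr})^{-1}$.
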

\begin{proof} We collect similar terms in \eqref{eq:log} and take the exponent of both sides to obtain
\[
\int_{\partial B_{2r}}\mu|u|^2ds\le C_1\left(\int_{\partial B_r}\mu|u|^2ds\right)^{\alpha}\left(\int_{\partial B_{4r}}\mu|u|^2ds\right)^{1-\alpha}\]
with $\alpha=(1+e^{4Cr})^{-1}$ and $\alpha$ can be chosen close to $1/2$ as $r\to 0$. 
This  inequality and bounds on $\mu$ imply the required estimate. 
\end{proof}

Assume that $A$ is as above with $A(0)=I$.
The corollary above and  the equivalence of norms (see Corollary \ref{cor:norms} below) imply the following three ball inequality for supremum norms 
\[ \sup_{B_{2r}}|u|\le C \left(\sup_{B_r}|u|\right)^{\alpha_1}\left(\sup_{B_{8r}}|u|\right)^{1-\alpha_1},\]
for some $C$ and $\alpha_1\in(0,1)$.
Then by local change of variables we can drop the assumption that $A(0)=I$, balls are replaced by ellipses. Applying the inequality several times and inscribing ellipses in balls we obtain the following statement. (We omit some technical details required for an accurate argument.) 
\begin{corollary}\label{cor:3ball1}
There exist $r_0>0$, $k$ large enough, $C$ and $\beta\in(0,1)$ such that if $B=B_r$ is a ball with $r<r_0$ and $B_{kr}\subset\Omega$ then
\[\sup_{B_{2r}}|u|\le C\left(\sup_{B_r}|u|\right)^\beta\left(\sup_{B_{kr}}|u|\right)^{1-\beta}.\]
\end{corollary}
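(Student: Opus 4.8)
The plan is to deduce the inequality for a ball $B_r(x_0)$ with arbitrary center $x_0$ from the preceding three-ball inequality for supremum norms, which is only available at a point where the leading coefficient equals the identity. Two things must be overcome: $A(x_0)$ need not be the identity, and the standard remedy --- a linear change of variables --- turns balls into ellipsoids, costing in the radii a factor comparable to the ellipticity constant $\Lambda$. I would absorb that cost by iterating the three-ball inequality first.

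First I would straighten the equation at $x_0$: with $A_0=A(x_0)$ and $\Phi(y)=x_0+A_0^{1/2}y$, the function $\tilde u=u\circ\Phi$ solves $\dv(\tilde A\nabla\tilde u)=0$ with $\tilde A(0)=A_0^{-1/2}A(x_0)A_0^{-1/2}=I$, and $\tilde A$ is again symmetric, Lipschitz and uniformly elliptic, with all constants controlled \emph{uniformly in} $x_0$ by $\Lambda$ and the Lipschitz constant of $A$. Since $\Lambda^{-1/2}I\le A_0^{1/2}\le\Lambda^{1/2}I$, the map $\Phi$ sends $\{|y|<\rho\}$ onto an ellipsoid $E_\rho(x_0)$ squeezed between balls, $B_{\Lambda^{-1/2}\rho}(x_0)\subset E_\rho(x_0)\subset B_{\Lambda^{1/2}\rho}(x_0)$; this is the ``inscribing ellipsoids in balls'' step mentioned above.

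Next I would iterate the supremum three-ball inequality for $\tilde u$, centered at $0$, at the dyadic scales $\rho,2\rho,\dots,2^{j-1}\rho$; telescoping and discarding the intermediate radii by monotonicity of $s\mapsto\sup_{B_s}|\tilde u|$ gives, for any fixed $j$,
\[\sup_{B_{2^j\rho}}|\tilde u|\le C_j\Bigl(\sup_{B_\rho}|\tilde u|\Bigr)^{\beta}\Bigl(\sup_{B_{2^{j+2}\rho}}|\tilde u|\Bigr)^{1-\beta},\]
where $C_j$ stays bounded (a geometric series in the three-ball constant) and $\beta$ is a product of $j$ numbers in $(0,1)$, hence a fixed number in $(0,1)$. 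This lets me pick $j$ so large that $M:=2^j\ge 2\Lambda$. Taking $\rho=\Lambda^{-1/2}r$ and using the sandwich above, one has $B_{2r}(x_0)\subset B_{M\Lambda^{-1}r}(x_0)\subset E_{M\rho}(x_0)$, while $E_\rho(x_0)\subset B_r(x_0)$ and $E_{2^{j+2}\rho}(x_0)\subset B_{2^{j+2}r}(x_0)$, so transporting the displayed inequality back through $\Phi$ gives
\[\sup_{B_{2r}(x_0)}|u|\le C_j\Bigl(\sup_{B_r(x_0)}|u|\Bigr)^{\beta}\Bigl(\sup_{B_{2^{j+2}r}(x_0)}|u|\Bigr)^{1-\beta},\]
which is the assertion with $k=2^{j+2}$, provided $r_0$ is small enough that the three-ball inequality applies at all the dyadic scales used and $B_{kr}(x_0)\subset\Omega$ so that everything stays inside $\Omega$.

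The essential point, and the only place some thought is needed, is the balance struck at the end: a \emph{single} application of the change-of-variables three-ball inequality improves the radius only by the factor $2\Lambda^{-1}$ (it controls $\sup_{B_{2\Lambda^{-1}r}}$, not $\sup_{B_{2r}}$), which is no improvement once $\Lambda\ge2$; the preliminary iteration is precisely what manufactures enough slack to absorb this loss while keeping $\beta$ strictly positive. The remaining work is bookkeeping: checking that conjugation by $A_0^{1/2}$ leaves all constants uniform in the center, and that $C_j$, the exponent $\beta$, and $k=2^{j+2}$ --- though $\beta$ may be small and $k$ large when $\Lambda$ is large --- are all fixed and finite, depending only on $\Lambda$, the Lipschitz constant of $A$ and the dimension.
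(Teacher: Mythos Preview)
Your argument is correct and is exactly the approach the paper sketches: a linear change of variables to normalize $A(x_0)=I$ (``balls are replaced by ellipses''), iteration of the supremum three-ball inequality at dyadic scales (``applying the inequality several times''), and the sandwich $B_{\Lambda^{-1/2}\rho}\subset E_\rho\subset B_{\Lambda^{1/2}\rho}$ (``inscribing ellipses in balls''). You have supplied precisely the ``technical details'' the paper explicitly omits, including the useful observation that a single application of the transformed inequality would be insufficient when $\Lambda\ge 2$, which is why the iteration is needed before transporting back.
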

The general version of this result can be obtain by the chain argument.
\begin{corollary}\label{cor:3ball}
 Let $B\subset K\Subset \Omega$, where $B$ is open and $K$ is compact. There exist $C$ and $\gamma\in(0,1)$ that depend only on $K,\Omega, B$ and the ellipticity and Lipschitz constants of $L$ such that for any solution $u$ to $Lu=0$ in $\Omega$ the following inequality holds
\[\sup_{K}|u|\le C\left(\sup_B|u|\right)^\gamma\left(\sup_\Omega|u|\right)^{1-\gamma}.\]
\end{corollary}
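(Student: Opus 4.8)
The plan is a chain-of-balls (propagation of smallness) argument built on the local three ball inequality of Corollary~\ref{cor:3ball1}. First I would dispose of the trivial cases: if $\sup_\Omega|u|=+\infty$ there is nothing to prove, and if $\sup_\Omega|u|=0$ then $u\equiv0$ in $\Omega$; so I may assume $0<M:=\sup_\Omega|u|<\infty$, and, enlarging it if necessary, that the constant $C$ of Corollary~\ref{cor:3ball1} satisfies $C\ge1$. (I also use that $\Omega$ is connected; otherwise the statement fails unless every component of $\Omega$ meeting $K$ also meets $B$, so this should be taken as part of the hypotheses.)

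Next I would set up the geometry. Since $K\Subset\Omega$ and $\Omega$ is a connected open set, there is a connected open set $\Omega'$ with $K\cup\overline B\subset\Omega'\Subset\Omega$ (cover the compact set $K\cup\overline B$ by finitely many balls compactly contained in $\Omega$, join their centres to a fixed one by paths, and fatten the paths slightly). Put $d_0:=\dist(\overline{\Omega'},\partial\Omega)>0$, fix $x_*\in B$, and choose $r>0$ so small that $r<r_0$, $kr<d_0$, and $B_r(x_*)\subset B$, where $r_0$ and $k$ are the constants from Corollary~\ref{cor:3ball1}. Cover $\overline{\Omega'}$ by finitely many balls $B_{r/2}(z_1),\dots,B_{r/2}(z_n)$ with $z_1=x_*$ and all $z_j\in\overline{\Omega'}$, and form the graph on $\{1,\dots,n\}$ having an edge between $i$ and $j$ precisely when $|z_i-z_j|<r$. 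Since the half-balls cover the connected set $\Omega'$, this graph is connected, so every vertex $j$ is joined to the vertex $1$ by a path of length at most $n-1$.

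Then I would run the iteration along such chains. Fix $j$ and let $w_0=x_*,w_1,\dots,w_L=z_j$ be the centres along a path of length $L\le n-1$; then $|w_{i+1}-w_i|<r$, so $B_r(w_{i+1})\subset B_{2r}(w_i)$, while $B_{kr}(w_i)\subset\Omega$ because $w_i\in\overline{\Omega'}$ and $kr<d_0$. Writing $S_i:=\sup_{B_r(w_i)}|u|$ and applying Corollary~\ref{cor:3ball1} at the centre $w_i$ gives
\[
S_{i+1}\le\sup_{B_{2r}(w_i)}|u|\le C\,S_i^{\beta}\Big(\sup_{B_{kr}(w_i)}|u|\Big)^{1-\beta}\le C\,S_i^{\beta}M^{1-\beta}.
\]
An induction on $i$ then yields $S_i\le C^{(1-\beta^i)/(1-\beta)}\,S_0^{\,\beta^i}\,M^{1-\beta^i}$; using $S_0=\sup_{B_r(x_*)}|u|\le\sup_B|u|\le M$, $C\ge1$, $\beta\in(0,1)$ and $L\le n-1$, this gives
\[
\sup_{B_r(z_j)}|u|=S_L\le C^{1/(1-\beta)}\Big(\sup_B|u|\Big)^{\gamma}M^{1-\gamma},\qquad\gamma:=\beta^{\,n-1}\in(0,1),
\]
uniformly in $j$. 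Since $K\subset\bigcup_j B_r(z_j)$, taking the maximum over $j$ and recalling $M=\sup_\Omega|u|$ yields the claimed inequality, with $\gamma=\beta^{n-1}$ and $C^{1/(1-\beta)}$ in place of $C$; here $n$ depends only on $K,\Omega,B$, while $r_0,k,\beta,C$ depend only on the ellipticity and Lipschitz constants of $L$, exactly as required.

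The routine parts are the induction solving the recursion $S_{i+1}\le CS_i^{\beta}M^{1-\beta}$ together with the elementary bounds $(1-\beta^i)/(1-\beta)\le 1/(1-\beta)$ and $(\sup_B|u|/M)^{\beta^L}\le(\sup_B|u|/M)^{\beta^{n-1}}$. The main obstacle is not any single estimate but the bookkeeping that keeps every constant uniform over all solutions $u$ and all points of $K$: producing the connected intermediate set $\Omega'$, choosing $r$ depending only on $K,\Omega$ (through $d_0$) and on $r_0,k$, and bounding the chain length by a number depending only on the covering of $\overline{\Omega'}$. This is precisely the ``technical details required for an accurate argument'' that were omitted in the passage preceding Corollary~\ref{cor:3ball1}.
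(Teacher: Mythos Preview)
Your proof is correct and follows essentially the same chain-of-balls argument as the paper: normalize, connect each point of $K$ to a fixed point of $B$ by a chain of small balls with $B_{j+1}\subset 2B_j$ and $kB_j\subset\Omega$, iterate Corollary~\ref{cor:3ball1} along the chain, and use compactness to bound the chain length uniformly. Your version is in fact more careful than the paper's sketch --- you make the connectedness hypothesis on $\Omega$ explicit, construct the intermediate domain $\Omega'$ to guarantee a uniform chain-length bound via a fixed finite covering, and solve the recursion exactly rather than appealing to a normalization --- but the strategy is the same.
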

\begin{proof}[Proof: Chain argument]
Assume that $\sup_{\Omega}|u|=1$. For each point $x\in K$ there is a curve $\gamma$ connecting $x$ to some fixed point in $B$. We then can find a finite sequence of balls $\{B_j\}_{j=1}^J$ such that $r(B_j)<r_0$, $B_1\subset B$, $B_{j+1}\subset 2B_j$, $kB_j\subset \Omega$ and  $x\in B_J=B(x)$. Applying the previous corollary we see that
\[\sup_{B_{j+1}}|u|\le \sup_{2B_j}|u|\le C(\sup_{B_j}|u|)^\beta.\]
Iterating this estimate we obtain
\[\sup_{B_J}|u|\le C_1(\sup_B|u|)^{\beta_1}.\]
Finally, we take a finite cover of $K$ by balls $B(x)$ and get the required estimate.
\end{proof}


\subsection{Doubling index}
We prefer to replace the frequency function by a comparable but more intuitive quantity that we call the doubling index.
Let $h\in C(\Omega)$, such that $h$ does not vanish on any open subset of $\Omega$. For any ball $B$ such that $2B\subset\Omega$ we define
\[\n_h(B)=\log\frac{\max_{2B}|h|}{\max_B|h|}.\]
Note that if  $p$ is a homogeneous polynomial of degree $n$ and $B$ is centered at the origin than $\n_p(B)=n\log 2$. At the same time if we compute the frequency function $N_P(r)$ defined for the case of the Laplace operator ($A=I$), we get
$N_p(r)=2n$. In general, if $h$ is a solution to $Lh=0$ then using the equivalence of norms (Corollary \ref{cor:norms}) and the estimate in the proof of Corollary \ref{cor:Double}, we obtain that for $r<r_0$
\[C_1^{-1}N_h(r)-C_2\le \n_h(B_r)\le C_1N_h(4r)+C_2.\]
 The inequality above and the almost monotonicity of the frequency implies the following almost monotonicity for the doubling index
\begin{equation}\label{eq:mondouble}
\n_h(B_r)\le C(\n_h(B_R)+1) 
\end{equation}
when $4r<R<R_0$.

\begin{remark}It is well known that the doubling index is connected to the size of the zero set of harmonic function at least in dimension two, we refer to works of Gelfond \cite{G}, Robinson \cite{R}, Nadirashvili \cite{N1} and an exposition in \cite{NPS}. Let $h$ be a harmonic function in the unit disk $\D$ and let $\beta(r)=\#\{Z(h)\cap r\T\}$ be the number of zeros of $h$ on the circle of radius $r$.
Assume that $h(0)=0$ then the following inequalities hold
\[c_1\beta(r)\le \n_h(2r\D)\le c_2\beta(4r).\]
\end{remark}


\subsection{Doubling index for eigenfunctions} The monotonicity of the doubling index and three sphere theorem hold for solutions of second order elliptic equations of the form $\dv(A\nabla h)=0$. For eigenfunctions $\phi_\lambda(x)$ on compact manifolds there is no monotonicity of the doubling index and the three sphere inequality gets a constant that depends on the eigenvalue. As above, we consider the lift $h(x,t)=e^{\sqrt{\lambda}t}\phi_\lambda(x)$ and then we can apply the results of the previous sections to $h$ that solves an equation of the form $\dv(A\nabla h)=0$. 

Donnelly and Fefferman used the doubling index in their study of nodal sets of eigenfunctions on smooth manifolds. One of their basic results for general smooth compact Riemannian manifolds is the following.
\begin{prop} Let $M$ be a smooth compact Riemannian manifold. There exists $r_0$ and $C$ depending on $M$ such that for any eigenfunction $\phi=\phi_\lambda$, 
\[\Delta_M\phi_\lambda+\lambda \phi_\lambda=0\]
 the doubling index $\n_{\phi}(B)\le C\sqrt{\lambda}$ when $B$ is a ball on $M$ with radius $r\le r_0$.
\end{prop}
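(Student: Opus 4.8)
The plan is to pass to the lifted function $h(x,t)=e^{\sqrt{\lambda}t}\phi_\lambda(x)$ on $M'=M\times\R$, which satisfies $\Delta_{M'}h=0$ and hence, in any coordinate chart, an equation $\dv(A\nabla h)=0$ with $A$ uniformly elliptic and Lipschitz; by compactness of $M$ all structural constants of these charted equations, as well as a radius $R_0$ making every ball $B((x_0,0),R_0)$ lie inside a chart, may be chosen uniformly. By weak unique continuation $\phi_\lambda$, and therefore $h$, does not vanish on any open set, so the doubling indices below are defined. Since for $(x,t)\in B((x_0,0),r)$ one has $|h(x,t)|\le e^{r\sqrt\lambda}\sup_{B(x_0,r)}|\phi_\lambda|$ while $\sup_{B(x_0,r)}|\phi_\lambda|\le\sup_{B((x_0,0),r)}|h|$, it follows that $\n_{\phi_\lambda}(B(x_0,r))\le\n_h(B((x_0,0),r))+r\sqrt\lambda$; thus it suffices to control $\n_h$ for balls centred on the slice $M\times\{0\}$. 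Moreover, by the almost monotonicity of the doubling index \eqref{eq:mondouble}, it is enough to obtain the bound $\n_h(B((x_0,0),R))\le C\sqrt\lambda$ at the single fixed scale $R=R_0/4$ for every $x_0\in M$, since then $\n_h(B((x_0,0),r))\le C'(\n_h(B((x_0,0),R))+1)\le C''\sqrt\lambda$ for all $r<R/4=:r_0$.

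At the fixed scale, with $p=(x_0,0)$, I estimate the numerator crudely by $\sup_{B_{2R}(p)}|h|\le e^{2R\sqrt\lambda}\max_M|\phi_\lambda|$; for the denominator, testing at points $(x,R/2)$ with $d(x,x_0)\le R/2$ gives $\sup_{B_R(p)}|h|\ge e^{R\sqrt\lambda/2}\sup_{B(x_0,R/2)}|\phi_\lambda|$. Hence everything reduces to a quantitative propagation-of-maximum statement for the eigenfunction itself: there is $C=C(M)$ such that $\sup_{B(x_0,R/2)}|\phi_\lambda|\ge e^{-C\sqrt\lambda}\max_M|\phi_\lambda|$ for every $x_0\in M$. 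To prove this, let $z\in M$ attain $A:=\max_M|\phi_\lambda|=|h(z,0)|$ and apply the three ball inequality, Corollary \ref{cor:3ball}, to $h$ on $M'$ with $\Omega=M\times(-3L,3L)$, $K=M\times[-L,L]$ and $B$ the ball of radius $R/2$ about $p$ — the constants $C_0,\gamma$ being made uniform in $x_0$ through a finite cover of $M\times\{0\}$ and the chain argument already present in that corollary. Since $A\le\sup_K|h|$, $\sup_\Omega|h|\le e^{3L\sqrt\lambda}A$ and $\sup_B|h|\le e^{R\sqrt\lambda/2}\sup_{B(x_0,R/2)}|\phi_\lambda|$, the inequality $\sup_K|h|\le C_0(\sup_B|h|)^\gamma(\sup_\Omega|h|)^{1-\gamma}$ rearranges, after dividing by $A^{1-\gamma}$, to $\sup_{B(x_0,R/2)}|\phi_\lambda|\ge e^{-C\sqrt\lambda}A$. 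Plugging this back yields $\n_h(B_R(p))\le(\tfrac32R+C)\sqrt\lambda$, the desired fixed-scale bound; combined with the reductions of the previous paragraph this gives $\n_{\phi_\lambda}(B)\le C\sqrt\lambda$ for all balls $B$ of radius $\le r_0$.

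The only substantial step is the propagation of the maximum; the rest is bookkeeping with the exponential weights $e^{\pm c\sqrt\lambda}$ generated by the $t$-variable, together with two tools already established: the almost monotonicity \eqref{eq:mondouble} (to descend from the fixed scale to all small scales) and the three ball inequality, Corollary \ref{cor:3ball}. Within the hard step the delicate points are (i) that $M'$ is a manifold rather than a Euclidean domain, handled by working in finitely many charts in which the lifted metric has uniformly controlled ellipticity and Lipschitz constants, and (ii) ensuring the three ball constants $C_0,\gamma$ neither degenerate as the centre $x_0$ ranges over the compact manifold $M$ nor as $\lambda\to\infty$ — the latter being automatic, since they depend only on the fixed cylinder geometry and the operator, not on the particular solution $h$. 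It is worth noting that no a priori polynomial bound on $\|\phi_\lambda\|_{L^\infty}$ is needed, as the factor $\max_M|\phi_\lambda|$ cancels between the numerator and the denominator.
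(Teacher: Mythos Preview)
Your proof is correct and follows essentially the same route as the paper: lift to $h$, compare $\n_{\phi_\lambda}$ with $\n_h$ at the cost of an additive $r\sqrt\lambda$, use a three--ball/chain argument to propagate $\max_M|\phi_\lambda|$ to any small ball (yielding the lower bound $\sup_{B}|\phi_\lambda|\ge e^{-C\sqrt\lambda}\max_M|\phi_\lambda|$), and then invoke almost monotonicity \eqref{eq:mondouble} to pass to all small radii. The only packaging difference is that the paper builds the chain of balls by hand from Corollary~\ref{cor:3ball1}, whereas you cite the already--chained Corollary~\ref{cor:3ball} on the cylinder; you correctly note that this requires working through finitely many charts so that the constants $C_0,\gamma$ are uniform over $x_0\in M$.
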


\begin{proof}
We note that $\n_{\phi}(B)\le \n_{h}(B')+C\sqrt{\lambda}$, where $B'$ is a ball in $M\times \R$ with center on $M\times{0}$ and the same radius as $B$, we say that $B'$ is the lift of $B$. It is enough to prove the estimate for the doubling index of $h$ on $M\times[-R,R]$. Assume that $\max_M|\phi|=|\phi(x_0)|=1$ and fix $r$ such that  for each point $x\in M$ the geodesic ball $B_r(x)$ is contained in a chart.  

Let $B$ be any ball of radius $r/2k$ on $M$ and $B'$ be its lift  in $M\times\R$.
We consider a finite chain $\{B_j\}_{j=1}^J$ of geodesic balls in $M\times[-r,r]$ with centers on $M\times 0$ and equal radii $r/2k$. We choose the balls such that $B_1=B'$,  $B_{j+1}\subset 2B_{j}$ and $(x_0,0)\in B_J$. Then by Corollary \ref{cor:3ball1}
\[\sup_{B_{j}}|h|\ge c(\sup_{2B_{j}}|h|)^{1/\beta}\ge c(\sup_{B_{j+1}}|h|)^{1/\beta}.\]
It implies that $\sup_{B'}|h|\ge c_1$, where $c_1=c_1(r)$, and then
\[e^{r\sqrt{\lambda}}\sup_{B}|\phi|\ge c_1,\]
where $c_1$ depends on $r$ and $M$ (which also determine the number of balls in a chain). Thus for any ball $B$ of radius $r$ (ot larger)  and the corresponding lifted ball $B'$ we obtain $\n_\phi(B)\le C(\sqrt{\lambda}+1)$ and $\n_h(B')\le C(\sqrt{\lambda}+1)$.  Finally, the almost monotonicity of the doubling index for $h$ implies similar estimate for balls of radius less than $r$.

\end{proof}


\subsection{Cubes}
In the next sections  a version of the doubling index for cubes will be useful. For a given cube  $Q\subset \R^d$ we denote its side length by $s(Q)$. Then the volume of the cube is
$|Q|=(s(Q))^d$.

 Assume that $u$ is a solution to the equation $Lu=0$ in a domain $\Omega\subset\R^d$ and for each cube $Q$ with $2Q\subset\Omega$  define
\begin{equation}\label{eq:double-def}
\n_u(Q)=\sup_{q\subset Q}\log\frac{\max_{2q}|u|}{\max_{q}|u|}.
\end{equation}
We claim that the almost monotonicity of the usual doubling index implies
that the supremum above is finite. By the definition, we have now that if $q\subset Q$ then  $\n_u(q)\le \n_u(Q)$. 

We want to compare $\n_u(Q)$ to $\log\max_{2Q}|u|-\log\max_Q|u|$. Take a cube $q\subset Q$. If $q$ is small, $s(q)<c_d s(Q)$, we first apply almost monotonicity inequality for the doubling index \eqref{eq:mondouble}.  Let $b$ be the largest ball inscribed in $q$ then $2q\subset k_db$, where $k_d=2\sqrt{d}$ and we have 
\[\log\frac{\max_{2q}|u|}{\max_q|u|}\le \log\frac{\max_{k_db}|u|}{\max_b|u|}\le C_1\log\frac{\max_{k_dB}|u|}{\max_{B}|u|}+C_2,\]
where $B$ is a ball concentric with $b$ such that $k_dB\subset Q$, $R=R(B)\sim s(Q)$. This implies
\[\frac{\max_{2q}|u|}{\max_q|u|}\le C_3\left(\frac{\max_{k_dB}|u|}{\max_{B}|u|}\right)^{C_1}.\]
Now, using that $R(B)$ is comparable to $s(Q)$,  we repeat the chain argument from the proof of Corollary \ref{cor:3ball} to obtain the inequality
\[\max_Q|u|\le C\left(\max_{B}|u|\right)^\gamma\left(\max_{2Q}|u|\right)^{1-\gamma}.\]
with $C$ and $\alpha\in(0,1)$ which does not depend on $B$ (for $B$ with $R(B)\sim s(Q)$ the number of balls in the chain is uniformly bounded). 
Finally,
\[\frac{\max_Q|u|}{\max_{2Q}|u|}\le C\left(\frac{\max_{B}|u|}{\max_{k_dB}|u|}\right)^{\gamma}\le C\left(\frac{\max_{q}|u|}{\max_{2q}|u|}\right)^{\gamma/C_1}.\]
For large cubes $q$ with $s(q)\ge c_d s(Q)$ the last inequality follows directly from the three balls inequality and the chain argument. Thus we obtain
\begin{equation}\label{eq:invdouble}
\log\frac{\max_{2Q}|u|}{\max_Q|u|}\ge a_1\n_u(Q)-a_2,
\end{equation}
where $a_1$ and $a_2$ depend on the ellipticity and Lipschitz constants of the operator only when we assume that $s(Q)\le 1$.
 
We also consider eigenfunctions on manifolds and define the doubling index for eigenfunctions over cubes  in a similar way, to prove that the supremum is finite for this case we can use the monotonicity for the lifted function.


\subsection{Remarks on the size of the zero sets of eigenfunctions and the doubling index}\label{ss:AReal}
 In this section we first formulate some results that were proved by Donnely and Fefferman \cite{DF}. We assume that $M$ is a Riemannian manifold the metric is  real analytic (or that coefficients of the corresponding elliptic operator are real-analytic).
\begin{lemma} Let $L=\dv(A\nabla\cdot)$ be a uniformly elliptic operator in the unit cube $Q_0\subset\R^{d+1}$ with real analytic coefficients. There is  constant $C=C(L)$ such that if $Lh=0$ and $\n_{h}(2Q_1)\le N,\ N>1$ then
\[\h^{d}(Z(h)\cap Q_1)\le CNs(Q_1)^{d}.\]
 \end{lemma}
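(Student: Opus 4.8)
The idea is to pass to the complexification of $h$ and to turn the bound on $\h^d(Z(h))$ into a problem about counting zeros of one-variable holomorphic functions along lines. Since the coefficients of $L$ are real analytic, the classical interior estimates of Morrey and Nirenberg give $|\partial^\alpha h(x)|\le C^{|\alpha|+1}|\alpha|!\,\rho^{-|\alpha|}\sup_{B_{2\rho}(x)}|h|$ whenever $B_{2\rho}(x)\subset Q_0$, so $h$ extends to a holomorphic function $\tilde h$ on a complex neighbourhood of $Q_1$. The first task is to make this quantitative in $N$. Normalise $\sup_{Q_1}|h|=1$; taking $q=Q_1$ in the definition \eqref{eq:double-def} of $\n_h(2Q_1)$ gives $\sup_{2Q_1}|h|\le e^{N}$, and feeding this into the analytic derivative bound above yields
\[
|\tilde h(z)|\le e^{C(N+1)}\qquad\text{on}\qquad \Omega_\C:=\{z\in\C^{d+1}:\dist(z,Q_1)<\delta\, s(Q_1)\},
\]
for fixed $\delta=\delta(L)>0$ and $C=C(L)$. (By scaling we may assume $s(Q_1)\sim1$, since dilating $Q_1$ to the unit cube only improves the analyticity radius of the coefficients and both sides of the asserted inequality scale correctly; and after subdividing $Q_1$ into boundedly many congruent subcubes — each still satisfying $\n_h(2q)\le N$ by monotonicity of $\n_h$ in the cube — we may in addition assume that $\delta$ is as large as we like, i.e.\ that the complex disc of radius $5\,s(Q_1)$ about any real point of $Q_1$ lies in $\Omega_\C$.) We shall also use the companion estimate
\[
\int_{Q_1}\log\frac1{|h(x)|}\,dx\le C(N+1)\, s(Q_1)^{d+1},
\]
which follows from the same complexification: restricting $\tilde h$ to complex lines, the zeros of the one-variable restrictions contribute only integrable logarithmic singularities, whose number, hence total mass, is controlled by $N$.

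With these two estimates in hand, I would bound the nodal measure by integral geometry. The set $Z(h)$ is a proper real-analytic variety (we may assume $h\not\equiv0$), hence $d$-rectifiable with an $\h^d$-a.e.\ defined unit normal $\nu$; the coarea formula gives $\int_{e_j^\perp}\#\bigl(Z(h)\cap(\xi+\R e_j)\bigr)\,d\xi=\int_{Z(h)}|\nu\cdot e_j|\,d\h^d$ for each coordinate direction, and summing over $j=1,\dots,d+1$ and using $\sum_j|\nu_j|\ge|\nu|=1$ gives
\[
\h^d\bigl(Z(h)\cap Q_1\bigr)\le\sum_{j=1}^{d+1}\int_{\Pi_j}\#\bigl(Z(h)\cap\ell_{j,\xi}\cap Q_1\bigr)\,d\xi ,
\]
where $\ell_{j,\xi}=\xi+\R e_j$ and $\Pi_j\subset e_j^\perp$ is the $d$-dimensional set of $\xi$ for which $\ell_{j,\xi}$ meets $Q_1$, so $\h^d(\Pi_j)\le s(Q_1)^d$. (The a.e.\ finiteness of the counting functions, and the finiteness of the bound we shall get, in fact reprove local finiteness of $\h^d(Z(h))$.)

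It remains to estimate one inner integral. Fix $j$, say $j=1$, write points as $(z,\xi)\in\C\times\C^d$, let $c_1$ be the centre of the $x_1$-range of $Q_1$ and put $r\sim s(Q_1)$, chosen so that $\{|z-c_1|<5r\}\times\{\xi\}\subset\Omega_\C$ for every $\xi\in\Pi_1$ and so that the segment $\ell_{1,\xi}\cap Q_1$ lies in $\{|z-c_1|<r\}$. For each $\xi\in\Pi_1$ the function $g_\xi(z)=\tilde h(z,\xi)$ is holomorphic on $D(c_1,5r)$ with $|g_\xi|\le e^{C(N+1)}$, and centring Jensen's formula at the point of $\overline{D(c_1,r)}$ at which $|g_\xi|$ is largest gives the clean count
\[
\#\bigl(Z(h)\cap\ell_{1,\xi}\cap Q_1\bigr)\le\frac1{\log 2}\Bigl(\log\max_{D(c_1,5r)}|g_\xi|-\log\max_{D(c_1,r)}|g_\xi|\Bigr).
\]
Integrating over $\xi\in\Pi_1$: the first term contributes at most $\tfrac1{\log 2}\,C(N+1)\,\h^d(\Pi_1)\le C(N+1)s(Q_1)^d$; for the second, since $\log\max_{D(c_1,r)}|g_\xi|\ge\tfrac1{2r}\int_{c_1-r}^{c_1+r}\log|h(x_1,\xi)|\,dx_1$, Fubini together with the $L^1$-estimate above gives
\[
-\int_{\Pi_1}\log\max_{D(c_1,r)}|g_\xi|\,d\xi\le\frac1{2r}\int_{Q_1}\log\frac1{|h|}\,dx\le C(N+1)s(Q_1)^d .
\]
Summing over $j$ yields $\h^d(Z(h)\cap Q_1)\le C(N+1)s(Q_1)^d$, and since $N>1$ we may replace $N+1$ by $N$ up to a constant, which is the claim.

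The step I expect to be the crux is the quantitative analytic continuation of the first paragraph: converting the purely real data $\n_h(2Q_1)\le N$ into a holomorphic extension of $h$ to a complex neighbourhood of \emph{definite} width $\sim s(Q_1)$ carrying the exponential bound $e^{C(N+1)}$, together with the companion $L^1$-control of $\log|h|$. This is precisely where the real analyticity of the coefficients is indispensable, through Cauchy-type interior derivative estimates, and where one must check that neither the width of the complex box nor the exponent is degraded by the elementary manipulations (rescaling, subdivision, and chaining the doubling bound). Everything downstream — the integral-geometric reduction and the one-variable Jensen count — is soft.
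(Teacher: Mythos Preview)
The paper does not give its own proof of this lemma; it is quoted as a result of Donnelly and Fefferman and used as a black box. So there is nothing to compare your sketch against except the original Donnelly--Fefferman argument, and your outline --- quantitative complexification via Morrey--Nirenberg, integral-geometric reduction to line slices, then a Jensen count --- is a recognisable variant of that classical route.

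The architecture is sound, but the one-line justification you give for the companion $L^1$ estimate
\[
\int_{Q_1}\log\frac{1}{|h|}\le C(N+1)\,s(Q_1)^{d+1}
\]
is circular as written. You say the zeros of the one-variable restrictions ``contribute only integrable logarithmic singularities, whose number, hence total mass, is controlled by $N$''; but for a \emph{generic} line $\ell_{j,\xi}$ the number of zeros of $g_\xi$ is \emph{not} controlled by $N$ --- it is bounded by $CN-\log\max_{\ell_{j,\xi}}|h|$, and the second term can be arbitrarily large on a small set of $\xi$. Bounding that second term in average is exactly the $L^1$ estimate you are trying to prove, so you cannot invoke it here.

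The non-circular route is to run the one-variable argument on lines through the \emph{maximum point} $x_0$ (where $|h(x_0)|=1$): for each direction $\omega\in S^d$, the restriction $g_\omega(z)=\tilde h(x_0+z\omega)$ satisfies $|g_\omega(0)|=1$ and $\sup|g_\omega|\le e^{C(N+1)}$, so Poisson--Jensen (Blaschke part plus Harnack on the harmonic part) gives $\int_0^r\log(1/|h(x_0+t\omega)|)\,dt\le CNr$ for each $\omega$. Integrating over $\omega\in S^d$ and passing from polar to Cartesian coordinates introduces only the weight $|x-x_0|^{-d}$, which is bounded below on $Q_1$ by $c\,s(Q_1)^{-d}$; this yields the unweighted $L^1$ bound you need. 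Once you reroute the estimate through $x_0$ in this way, the rest of your argument closes cleanly. You flagged this step as the crux in your final paragraph --- it is, and the missing idea is precisely the anchoring at the max point.
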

We don't know if this lemma remains true for non-analytic case. Suppose that $\phi_\lambda$ is an eigenfunction on a compact manifold $M$ with real-analytic metric. Applying this lemma to $h(x,t)=\phi_\lambda(x)\exp(\sqrt{\lambda}t)$ on charts and having in mind the bound for the doubling index of $h$, one obtains the upper bound in Yau's conjecture
\[\h^{d-1}(Z(\phi_\lambda))\le C\sqrt{\lambda}.\]

This part of the conjecture is open for non-analytic manifolds. The best known result, see \cite{L1}, is based on a non-analytic version of the lemma above, the estimate is
\[\h^d(Z(h)\cap Q_1)\le CN^As(Q_1)^{d}\]
for some $A=A(d)$. It implies  a polynomial bound in Yau's conjecture.

To obtain the lower bound in the Yau's conjecture on manifolds with real analytic metric, Donnelly and Fefferman proved the following statement.
\begin{lemma} Suppose that $M$ is a real-analytic manifold. There exists $N_0$ such that the following is true. If $\phi=\phi_\lambda$ is an eigenfunction on $M$ and  $M$ is partitioned into cubes with side length $\approx \sqrt{\lambda}^{-1}$, $M=\cup q$, then for at least half of these cubes $\n_\phi(q)\le N_0$.
\end{lemma}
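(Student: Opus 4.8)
The plan is to deduce the statement from an averaged bound on the doubling index, followed by a counting (Markov) argument. Normalize $\max_M|\phi|=1$, fix the radius $r_0$ occurring in the doubling estimates above, and work in charts of diameter $\ll r_0$. Let $P$ denote the number of cubes in the partition; since each has side $\approx\lambda^{-1/2}$ we have $P\approx|M|\lambda^{d/2}$. Everything is then reduced to the inequality
\[
\sum_q \n_\phi(q)\le C(M)\,P .
\]
Indeed, granting this, the number of cubes with $\n_\phi(q)>N_0$ is at most $C(M)P/N_0$, which is $\le P/2$ as soon as $N_0\ge 2C(M)$; and this $N_0$ does not depend on $\lambda$, as required.

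To bound the sum I would first trade the cube doubling index $\n_\phi(q)$ for the plain logarithmic ratio $\log\bigl(\max_{2q}|\phi|/\max_q|\phi|\bigr)$ by means of the inverse inequality \eqref{eq:invdouble}, which applies to eigenfunctions through the lift $h(x,t)=\phi(x)e^{\sqrt\lambda t}$ with constants depending on $M$ only, because $s(q)\le 1$; the additive error it introduces sums to at most $C(M)P$. Next I would group the cubes into lattice lines in the coordinate directions of each chart. Along one such line $q_1,\dots,q_m$ the dilate $2q_k$ meets only $O(1)$ cubes, so $\sum_k\log\bigl(\max_{2q_k}|\phi|/\max_{q_k}|\phi|\bigr)$ is dominated by the total variation of $q\mapsto\log\max_q|\phi|$ over a bounded neighbourhood of the line. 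After summing over all lattice lines the whole problem reduces to the single estimate: the total variation of $q\mapsto\log\max_q|\phi|$ along an arbitrary lattice line of scale-$\lambda^{-1/2}$ cubes through $M$ is at most $C(M)$ times the number of cubes the line contains (that is, $O(1)$ per cube on average).

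This last estimate is where real-analyticity enters, and it is the step I expect to be the main obstacle. Using the bound $\n_\phi(B)\le C\sqrt\lambda$ on balls of radius $\le r_0$ proved above (and analyticity of the metric, hence of the lifted elliptic operator) one extends $\phi$ to a holomorphic function $\widetilde\phi$ on a complex neighbourhood $M_\C$ of $M$ of a fixed size, subject to the growth estimate $\max_{M_\C}|\widetilde\phi|\le e^{C(M)\sqrt\lambda}$. For a cube $q$ choose a complex ball $q_\C$ of radius $\approx\lambda^{-1/2}$ about its centre; comparing real and complex maxima (subharmonicity of $\log|\widetilde\phi|$) and applying a Jensen-type argument along complex lines through $q_\C$ shows that $\log\bigl(\max_{2q}|\phi|/\max_q|\phi|\bigr)$ is comparable, up to an additive $C(M)$, to the number of zeros of $\widetilde\phi$ met by a generic complex line through $q_\C$ — here one uses crucially that, away from its zeros, $\widetilde\phi$ grows only at the fixed exponential rate $\sqrt\lambda$, so over a ball of radius $\lambda^{-1/2}$ that growth contributes only $O(1)$ to the doubling. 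Since the balls $q_\C$ have bounded overlap and the growth bound forces the complex nodal set of $\widetilde\phi$ to behave like that of a polynomial of degree $O(\sqrt\lambda)$, the sum of these local zero counts over all $P$ cubes is at most $C\sqrt\lambda\cdot\lambda^{-1/2}\cdot P=C(M)P$. This yields $\sum_q\n_\phi(q)\le C(M)P$ and hence the lemma.

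The essential difficulty is precisely the global input just invoked. Pointwise the doubling index of $\phi$ at scale $\lambda^{-1/2}$ can be as large as $c\sqrt\lambda$ — an eigenfunction may vanish to order $\sqrt\lambda$ at a point — so no purely local estimate can work; one must exploit that the \emph{total} doubling budget along a column of cubes is only $O(\sqrt\lambda)$, measured either by the unit-scale bound $\n_\phi(B)\le C\sqrt\lambda$ or, after complexification, by the $O(\sqrt\lambda)$ degree of the complex nodal set, and it is this global constraint that forces all but a small fraction of the cubes to carry a bounded doubling index. It is plausible that the complexification can be avoided by a real-variable argument iterating the three-ball inequality of Corollary \ref{cor:3ball1} along columns together with the almost-monotonicity \eqref{eq:mondouble}; but arranging the bookkeeping of the accumulated doubling index so that the $\sqrt\lambda$ budget per column is respected without analyticity is the delicate point, and is the reason the hypothesis of a real-analytic metric is imposed here.
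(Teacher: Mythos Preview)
The paper does not prove this lemma; it is stated in the subsection on zero sets of eigenfunctions as a result of Donnelly and Fefferman \cite{DF} and used without argument. There is therefore no proof in the text to compare against.

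On your sketch: the Markov reduction to $\sum_q\n_\phi(q)\le C(M)P$ is correct and is indeed the shape of the Donnelly--Fefferman argument, and complexification with the growth bound $|\widetilde\phi|\le e^{C\sqrt\lambda}$ is the right use of real-analyticity. But what you have written stops short of a proof at exactly the point you yourself flag as ``the main obstacle''. The lattice-line reformulation is acceptable as a bookkeeping device (since $2q$ meets only boundedly many neighbouring cubes, $\sum_q\log(\max_{2q}|\phi|/\max_q|\phi|)$ is indeed controlled by the discrete total variation of $q\mapsto\log\max_q|\phi|$), but the assertion it leads to --- that this total variation along each line is $O(1)$ per cube on average --- is never established. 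And in the complex paragraph the sentence ``the sum of these local zero counts over all $P$ cubes is at most $C\sqrt\lambda\cdot\lambda^{-1/2}\cdot P$'' simply restates the desired conclusion: converting the global degree bound $O(\sqrt\lambda)$ on the complex nodal variety of $\widetilde\phi$ into a bound on the \emph{sum} of its local masses over the family of small complex balls is precisely the substantive analytic step (carried out in \cite{DF} via a Jensen / integral-geometric argument in several complex variables), and it is not done here. So the proposal correctly identifies the strategy and the crux, but does not execute the crux.
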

This lemma can be combined with the next one (applied for the lifted function) to give the conjectured lower bound for the size of the zero set of eigenfunctions on real-analytic manifolds.

\begin{lemma} \label{l:low} Let $L=\dv(A\nabla\cdot)$ be a uniformly elliptic operator with smooth coefficients in the unit cube $Q_0\subset\R^{d+1}$. There exists a function $f(N)$ that depends only on $L$ such that if $Lh=0$ in $Q_0$, $h(0)=0$ and $\n_h(Q_1)\le N$ then 
\[\h^{d}(Z(h)\cap Q_1)\ge f(N)s(Q_1)^d.\]
\end{lemma}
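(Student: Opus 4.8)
The plan is to reduce to a normalized configuration, then use the vanishing $h(0)=0$ together with the doubling bound to force $h$ to be \emph{genuinely two-sided} on a ball whose size depends only on $N$, and finally convert this two-sidedness into a lower bound for $\h^{d}(Z(h))$ by an integral-geometric (Crofton) argument.

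\emph{Reduction.} Under the rescaling $x\mapsto x/s(Q_1)$ the operator stays uniformly elliptic with the same ellipticity constants and no larger Lipschitz/smoothness norms, the doubling index $\n_h(Q_1)$ is unchanged, $h(0)=0$ is preserved, and $\h^{d}(Z(h)\cap Q_1)$ scales by $s(Q_1)^{d}$; so I may assume $s(Q_1)=1$, that $Q_1$ is the unit cube centered at the origin (so $2Q_1\subset Q_0$), and that $\max_{Q_1}|h|=1$. The goal becomes: produce $f(N)>0$, depending only on $N$ and the operator, with $\h^{d}(Z(h)\cap Q_1)\ge f(N)$.

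\emph{Two-sidedness near the origin.} Applying $\n_h(Q_1)\le N$ along a chain of a bounded (dimensional) number of concentric dyadic dilations $q^*\subset 2q^*\subset\dots\subset Q_1$ of a fixed small cube $q^*$ around the origin, one gets $\max_{q^*}|h|\ge e^{-c_dN}\max_{Q_1}|h|=e^{-c_dN}$; in particular $\max_{B^*}|h|\ge e^{-c_dN}$ for a concentric ball $B^*\supset q^*$ of dimensional radius, with $2B^*\Subset Q_1$. Now set $\eps:=\max_{2B^*}h^{-}$. If $\eps=0$ then $h\ge 0$ on $2B^*$ and, since $h(0)=0$, the Harnack inequality (Theorem~\ref{th:Harnack}) forces $h\equiv 0$ on $B^*$, hence $h\equiv 0$ everywhere by unique continuation, contradicting $\max_{Q_1}|h|=1$. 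So $\eps>0$; then $h+\eps\ge 0$ is again a solution of $\dv(A\nabla(h+\eps))=0$ on $2B^*$, and Harnack gives $\max_{B^*}(h+\eps)\le C_H\min_{B^*}(h+\eps)\le C_H(h(0)+\eps)=C_H\eps$, whence $\max_{B^*}|h|\le C_H\eps$ and therefore $\eps\ge e^{-c_dN}/C_H=:c_N>0$. The same argument applied to $-h$ gives $\max_{2B^*}h^{+}\ge c_N$. Fix $x_{\pm}\in 2B^*$ with $h(x_+)\ge c_N$, $h(x_-)\le -c_N$; note $|x_+-x_-|$ is bounded by a dimensional constant.

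\emph{From points to balls, and Crofton.} By standard interior gradient estimates for divergence-form equations with smooth coefficients, $|\nabla h|\le L_0$ on a fixed neighbourhood of $2B^*$ inside $Q_1$, with $L_0$ depending only on the operator (here $\max_{Q_1}|h|=1$ is used). Hence, with $\rho_N:=\min\{c_N/(2L_0),\,\rho_d\}$ (where $\rho_d$ is chosen so that $\rho_N$-balls around $x_\pm$ stay in $Q_1$), we have $h\ge c_N/2$ on $B_+:=B_{\rho_N}(x_+)$ and $h\le -c_N/2$ on $B_-:=B_{\rho_N}(x_-)$, with $B_\pm\subset Q_1$. Any affine line $\ell$ in $\R^{d+1}$ that meets both $B_+$ and $B_-$ takes on $\ell\cap Q_1$ a value $\ge c_N/2$ and a value $\le -c_N/2$, so by continuity $Z(h)\cap\ell\cap Q_1\neq\varnothing$. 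With respect to the kinematic measure $\mu$ on affine lines, the set $\mathcal L_N$ of lines meeting both $B_+$ and $B_-$ satisfies $\mu(\mathcal L_N)\ge\kappa_N>0$, where $\kappa_N$ depends only on $\rho_N$ and on the (bounded) distance $|x_+-x_-|$, hence only on $N$ and the operator. Since $Z(h)$ is $d$-rectifiable with locally finite $\h^{d}$-measure (see \cite{HS}), the Cauchy--Crofton formula gives
\[\h^{d}(Z(h)\cap Q_1)= c_{d+1}\int \#\bigl(Z(h)\cap Q_1\cap\ell\bigr)\,d\mu(\ell)\ \ge\ c_{d+1}\,\mu(\mathcal L_N)\ \ge\ c_{d+1}\,\kappa_N\ =:\ f(N)>0,\]
and undoing the rescaling yields $\h^{d}(Z(h)\cap Q_1)\ge f(N)\,s(Q_1)^{d}$.

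\emph{Main obstacle.} The crux is the two-sidedness step: this is the only place $h(0)=0$ is used, and one must see that the doubling bound prevents $h$ from being ``almost one-signed'' on a ball of dimensional radius (the Harnack comparison of $h+\eps$ converts the hypothetical one-signedness into a uniform smallness of $|h|$ there, which is then contradicted by the doubling lower bound). The size $c_N$ of the two-sided oscillation is allowed to degrade with $N$ --- indeed $f(N)$ will be exponentially small --- which is exactly why $f$ is permitted to depend on $N$. The Crofton step is soft; the only care needed is the rectifiability and local finiteness of $\h^{d}(Z(h))$ (alternatively, one can bypass this by invoking the general inequality $\h^{d}(E)\ge c_{d+1}\int\#(E\cap\ell)\,d\mu(\ell)$, valid for every Borel set $E\subset\R^{d+1}$).
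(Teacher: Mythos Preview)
The paper does not prove this lemma; it only states it, refers to \cite{LM1} for a ``simple quantification,'' and remarks that it is weaker than Theorem~\ref{th:L}. So there is no proof in the paper to compare against.

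Your argument is correct and is in fact the standard route to such lower bounds: the doubling chain gives $\max_{B^*}|h|\ge e^{-c_dN}$ on a fixed small ball around the zero; Harnack applied to the nonnegative solution $h+\eps$ together with $h(0)=0$ then forces genuine two-sidedness with oscillation $\ge c_N=e^{-c_dN}/C_H$; the interior gradient bound (available since the coefficients are smooth and $\sup_{Q_1}|h|=1$) thickens the two sign-definite points to balls of radius $\rho_N$; and the projection inequality $\h^d(E)\ge c\,\mu(\{\ell:\ell\cap E\neq\varnothing\})$ --- which holds for every Borel $E$ because orthogonal projections are $1$-Lipschitz, so no rectifiability is actually needed --- finishes. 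One cosmetic point: in your reduction you take $Q_1$ to be centered at the origin, which presupposes that $0$ is the center of $Q_1$; if the hypothesis only places $0$ somewhere in $Q_1$, first pass to a subcube of comparable side with $0$ well inside --- monotonicity of $\n_h$ under inclusion makes this harmless, at the cost of a dimensional factor absorbed into $f(N)$.
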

 The last lemma does not require analyticity of the coefficients. A simple quantification of this estimate is known (see  remarks in \cite{LM1}); the statement of \ref{l:low} is weaker than Theorem \ref{th:L}. 

We conclude this lecture by  formulating an estimate for the size of the zero set from above which is not as precise as  the polynomial bound in \cite{L1}. It follows from earlier results by Hardt and Simon \cite{HS}. 

\begin{lemma}\label{l:up} Let $L=\dv(A\nabla\cdot)$ be a uniformly elliptic operator with smooth coefficients in the unit cube $Q_0\subset\R^{d+1}$. There exists a function $F(N)$ that depends only on  $L$ such that if $Lh=0$ in $Q_0$, and $\n_h(Q_1)\le N$ then 
\[\h^{d}(Z(h)\cap Q_1)\le F(N)s(Q_1)^d.\]
\end{lemma}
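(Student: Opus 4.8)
The plan is to reduce the statement to a covering/compactness argument built on the already-established tools: the almost monotonicity of the doubling index \eqref{eq:mondouble}, the three balls inequality (Corollary \ref{cor:3ball1}), and the local structure theory of nodal sets of solutions of second order elliptic equations with smooth coefficients due to Hardt and Simon. The function $F(N)$ will be obtained by a dimensional/compactness bound, not by an explicit formula; this is exactly what distinguishes this statement from the polynomial bound of \cite{L1}.

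First I would set up the rescaling. It suffices to prove the estimate for $s(Q_1)=1$ (so $Q_1\subset 2Q_1\subset Q_0$ and $\n_h(Q_1)\le N$), since a solution of $Lu=0$ on $Q_0$ restricted to a subcube of side $s$, rescaled to unit size, solves an equation $\dv(\tilde A\nabla \tilde u)=0$ whose ellipticity constant is unchanged and whose Lipschitz/smoothness constants only improve as $s\le1$; the $(d)$-dimensional Hausdorff measure then scales by $s(Q_1)^d$. Next, by the almost monotonicity \eqref{eq:mondouble} (passing through inscribed balls as in the Cubes subsection), the hypothesis $\n_h(Q_1)\le N$ gives a uniform bound $\n_h(b)\le C(N+1)$ for every ball $b$ with $2b\subset Q_1$ whose radius is bounded below by a fixed fraction of $s(Q_1)$. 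In particular $\log\bigl(\max_{2b}|h|/\max_b|h|\bigr)\le C(N+1)$ on a fixed finite family of balls covering $Q_1$; after normalizing $\max_{Q_1}|h|=1$ and using the three balls inequality / chain argument (Corollary \ref{cor:3ball}), we obtain a two-sided bound: $\max_{b}|h|\ge e^{-C'(N+1)}$ on each ball of the cover, and trivially $\max_{2Q_1}|h|\le e^{C''(N+1)}$.

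The heart of the argument is a local estimate on nodal volume in terms of the doubling index. Fix a ball $b\subset Q_1$. On $b$ the solution $h$ has doubling index bounded by $C(N+1)$, so $h$ lies (after rescaling to unit ball and normalizing) in the family $\mathcal F_{C(N+1)}$ of solutions $u$ of uniformly elliptic smooth-coefficient equations with $\max_{B_1}|u|=1$ and $\n_u(B_1)\le C(N+1)$. I claim $\sup_{u\in\mathcal F_{C(N+1)}}\h^{d}(Z(u)\cap B_{1/2})<\infty$. This is where the Hardt–Simon theory enters: for a single fixed $u$ (smooth coefficients), $Z(u)\cap B_{1/2}$ is a finite union of smooth $d$-dimensional submanifolds together with a lower-dimensional singular set, hence has finite $\h^d$ measure; what must be promoted to a family statement is a \emph{uniform} bound, and this follows by a compactness argument — if the measures were unbounded one would extract (using interior elliptic estimates and the doubling bound, which prevents concentration and degeneration) a convergent sequence whose limit is a nonzero solution with infinite nodal measure, contradicting Hardt–Simon. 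Summing this uniform local bound over the finitely many balls of the cover of $Q_1$ (whose number depends only on $d$) yields $\h^{d}(Z(h)\cap Q_1)\le F(N)$ with $F$ depending only on $L$ through its ellipticity and smoothness constants.

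The main obstacle is precisely the passage from the qualitative Hardt–Simon finiteness (valid pointwise in the family) to a \emph{uniform} bound over $\mathcal F_{C(N+1)}$. One has to check that the compactness extraction is legitimate: the doubling hypothesis must be used to rule out both loss of mass to the boundary and degeneration of the limit (i.e. the limit solution is not identically zero and still has controlled doubling index), and one must know that the nodal measure $\h^d(Z(\cdot)\cap B_{1/2})$ is lower semicontinuous, or at least does not drop in a way that invalidates the contradiction, under the relevant mode of convergence (locally uniform convergence together with convergence of coefficients). This semicontinuity-type input is standard for nodal sets of elliptic solutions with smooth coefficients but is the one genuinely non-elementary ingredient; everything else is bookkeeping with the doubling index machinery already developed above.
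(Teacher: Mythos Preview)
The paper does not give a proof of this lemma; it simply records that the estimate ``follows from earlier results by Hardt and Simon \cite{HS}.'' The intended reading is that Hardt--Simon is already a \emph{quantitative} statement: their argument bounds $\h^{d}(Z(h)\cap B_{1/2})$ by an explicit (if poor) function of the frequency/vanishing order of $h$, which by Section~\ref{s:di} is comparable to the doubling index. After the rescaling you describe, that is exactly $F(N)$.

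Your proposal instead tries to use only the \emph{qualitative} conclusion of Hardt--Simon (finiteness of the nodal measure for each fixed solution) and to upgrade it to a uniform bound over the family $\mathcal F_{C(N+1)}$ by compactness. This is a genuine gap. The step ``extract a limit $u$ and obtain $\h^d(Z(u)\cap B_{1/2})=\infty$'' requires a lower semicontinuity of the nodal Hausdorff measure along $C^\infty_{\mathrm{loc}}$-convergent sequences of solutions, and that is neither standard nor, as far as I know, true in the bare form you need. Nodal sets can smooth out, disconnect, or lose measure in the limit even under very strong convergence; the doubling bound prevents the limit from being identically zero, but it does not by itself force the nodal measure to persist. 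Any argument that would establish the required semicontinuity has to control the nodal set \emph{uniformly} along the sequence, which is precisely the quantitative Hardt--Simon estimate you were trying to avoid. So the compactness route is circular: the ``one genuinely non-elementary ingredient'' you flag is not a technicality but the whole content of the lemma.

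The preliminary reductions (rescaling, transferring $\n_h(Q_1)\le N$ to a doubling bound on each ball of a fixed cover via \eqref{eq:mondouble} and Corollary~\ref{cor:3ball}) are correct and are exactly what one does before invoking the quantitative Hardt--Simon bound on each ball.
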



\subsection{Exercises}
\begin{exercise} Let $h$ be a harmonic function on $\R^d$. The frequency function of $h$ is defined by 
\[N(r)=\frac{rH'(r)}{2H(r)},\]
where $H(r)=r^{1-d}\int_{|x|=r}|h(x)|^2ds(x).$\\
a) Show that if $h$ is a homogeneous polynomial of degree $n$ then $N(r)=n$.\\
b) Let $h=\sum_{k=l}^L p_k$, where $p_k$ is a homogeneous harmonic polynomial of degree $k$ and $p_l, p_L\neq 0$. Show that 
\[\lim_{r\to 0}N(r)=l\quad{\text{and}}\quad \lim_{r\to\infty}N(r)=L.\]
Remark: $l$ is called the vanishing order of $h$ at the origin.\\
c) Use the fact that $N(r)$ is a non-decreasing function to prove that
\[\left(\frac{R}{r}\right)^{2N(r)}\le \frac{H(R)}{H(r)}\le \left(\frac{R}{r}\right)^{2N(R)}.\]
\end{exercise}

\begin{exercise}[Applications of the three ball inequality]\ \\
 Suppose that $h$ is a non-constant harmonic function in  $\R^d$ such that $|h|\le 1 $ on a half-space $\{x=(x_1,...,x_d, x_d>0\}$. Let
\[m(R)=\max_{|x|<R}|h|.\]
a) Show that there exist $c>0$ and $\alpha\in(0,1)$ such that 
\[m(R)\le C m(5R)^\alpha.\]
b) Show that $m(R)\ge c\exp(R^\beta)$ for some $\beta>0$.
\end{exercise}

\begin{exercise}[Log-convex functions]\ \\
Let $m:\R_+\to\R_+$ be a continuous function. We say that $m$ is log-convex if 
$f(t)=\ln(m(\exp(t)))$ is a convex function. (For example if $m(x)=x^a, a>0$ then $f(t)=at$ and $m$ is log-convex.) Warning: usually a positive function $g$ is called logarithmically convex if $log (g)$ is a convex function.\\
a) Show that if $a_k$ are non-negative numbers then $m(x)=\sum_{k=1}^n a_k x^k$ is log-convex.\\
{\it{Remark:}} It is true that the sum of two log-convex functions is log-convex.\\
b) Let $u$ be a harmonic function in the unit ball of $\R^d$, we know that
\[u(x)=\sum_{k=0}^\infty |x|^kY_k(x/|x|),\]
where $Y_k$ is an eigenfunction of the Laplace-Beltrami operator on the unit sphere $S\subset\R^d$.
Show that
\[m(r)=\int_{S}|u(ry)|^2ds(y)\]
is log-convex.\\
c) Let  $K(x,t)$ be the heat kernel in $\R^d$, 
\[K(x,t)=(4\pi t)^{-d/2}\exp(-|x|^2/(4t)),\]
and it satisfies the equation $\Delta K(x,t)=\partial_t K(x,t)$. Suppose that $u$ is a harmonic function in $\R^d$ such that $u(x)\exp(-c|x|^2)\in L^2(\R^d)$ for any $c>0$. Define
\[ M(t)=\int_{\R^d} |u(x)|^2K(x,t)dt.\]
Compute $M'(t)$ and show that $M^{(m)}(t)\ge 0$ for any $m$.\\
 {\it{Remark:}} The positivity of all derivatives implies that $M(t)$ is a log-convex function. This convexity was studied by Lippner and Mangoubi \cite{LiMa} for the case of discrete harmonic functions. 
\end{exercise}

\begin{exercise}[Reverse H\"older inequality for solutions of elliptic equations]\label{ex:revH}
Show that if $u$ is a solution of a uniformly elliptic equation with Lipschitz coefficients, $\dv(A\nabla u)=0$ in a ball $B_0$  then for some (any) $q>1$ there exists $C_q(u)$ such that  for any ball $B\subset 1/2B_0$
\[\left(|B|^{-1}\int_B|u|^{2q}\right)^{1/q}\le C_q(u)|B|^{-1}\int_B|u|^2.\]
{\it{Remark:}} It implies that $|u|^2$ is a  Muckenhoupt weight and therefore  $|Z(u)|=0$. Similar inequality holds for function $u-|B|^{-1}\int_B u$ and together with Caccioppoli inequality it implies that $|\nabla u|^2$ is also a Muckenhoupt weight (see \cite{GL} for details).
\end{exercise}


\section{Small values of polynomials and solutions of elliptic PDEs}\label{s:R}
We start with a non-constant polynomial $P\in\C[z]$ of one complex variable with complex coefficients,
\[P(z)=a_nz^n+a_{n-a}z^{n-1}+...+a_1z+a_0.\]
 As $|z|$ grows the behavior of $P(z)$ resembles that of the highest degree term $a_nz^n$. As we know $P(z)$ has $n$ zeros counting multiplicities and   the set $\{z: |P(z)|<C\}$ is bounded and contains the zeros. We use the notation
\[E_a(P)=\{z: |P(z)|<e^{-a}\}.\]

\subsection{Classical results of Cartan and Polya} A classical result on the set where a polynomial takes small values is due to H. Cartan. We denote by $\p_n$ the set of all  polynomials of degree $n$ with leading coefficient $1$,
\[\p_n=\{p(z)=z^n+a_{n-1}z^{n-1}+...+a_1z+a_0\in\C[z]\}.\]
\begin{lemma}[Cartan, 1928]
Let $p\in\p_n$ then for any $a,\alpha>0$ there exist a finite collection of balls $\{B_j\}$ such that $E_{na}(p)\subset\cup_j B_j$ and $\sum_jr_j^\alpha\le e(2e^{-a})^{\alpha}$, where $r_j$ is the radius of $B_j$
\end{lemma}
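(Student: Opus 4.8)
The plan is to reduce to Cartan's classical covering lemma for monic polynomials. Since $p\in\p_n$ is monic we factor $p(z)=\prod_{k=1}^{n}(z-z_k)$, listing the zeros with multiplicity, and abbreviate $H:=e^{-a}\in(0,1)$. Then the set to cover is $E_{na}(p)=\{z:\prod_{k=1}^{n}|z-z_k|<H^{n}\}$, and the goal is a finite cover by disks $B_j$ of radii $r_j$ with $\sum_j r_j^{\alpha}\le e(2H)^{\alpha}$. I would establish this for every $\alpha>0$ simultaneously, by induction on the degree $n$; the base case $n=1$ is the single disk $B(z_1,H)$, for which $r_1^{\alpha}=H^{\alpha}\le e(2H)^{\alpha}$.

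Before the induction two structural remarks help. First, $E_{na}(p)$ is a sublevel set of $\tfrac1n\log|p|$, which equals $\int\log|z-w|\,d\mu(w)$ for $\mu$ the normalised zero-counting measure of $p$; hence $E_{na}(p)$ has logarithmic capacity at most $e^{-a}$, so we are asking for a covering whose sum of $\alpha$-th powers of radii is controlled by the capacity. Second, a crude cover is immediate: if $z\in E_{na}(p)$ then the geometric mean of $|z-z_1|,\dots,|z-z_n|$ is $<H$, so $z$ lies within $H$ of some zero and $E_{na}(p)\subset\bigcup_{k=1}^{n}B(z_k,H)$. This uses $n$ disks of radius $H$ and gives only $\sum_j r_j^{\alpha}\le nH^{\alpha}$, so the whole difficulty is to exploit the clustering of the zeros: a single disk of radius a bounded multiple of $H$ containing many zeros must replace many small disks.

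The inductive step carries this out by peeling off a ``tightest cluster''. One chooses a disk $D_0$ containing a large number $\nu_0$ of the zeros while having small radius — for instance, let $\nu_0$ be the largest $\nu$ for which some closed disk of radius $\nu H$ contains at least $\nu$ of the $z_k$ ($\nu=1$ always works, and by maximality such a disk then contains exactly $\nu_0$ zeros), and take $D_0$ to be a minimal-radius disk containing $\nu_0$ of the zeros. One enlarges $D_0$ to a concentric disk $\widehat D_0$ by a carefully chosen amount, puts $\widehat D_0$ into the cover, removes the zeros it contains, and recurses on the monic polynomial $\tilde p$ of degree $n-\nu_0$ carrying the remaining zeros. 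The geometric input that closes the recursion is: for $z\notin\widehat D_0$ each removed zero is at distance at least $H$ (or at least some controlled amount) from $z$, so $\prod_{z_k\in D_0}|z-z_k|$ is bounded below, forcing $\prod_{z_k\notin D_0}|z-z_k|<H^{\,n-\nu_0}$; thus $z\in E_{(n-\nu_0)a}(\tilde p)$ is handled by the inductive hypothesis, while $E_{na}(p)\cap\widehat D_0$ is covered by $\widehat D_0$ itself.

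The main obstacle is the bookkeeping: keeping $\sum_j r_j^{\alpha}$ bounded \emph{independently of $n$}, which is exactly what forces the clean constant $e$ into the statement. The naive accounting ``at most $n$ disks of radius comparable to $H$'' is useless, so one must choose the enlargement $\widehat D_0$ and strengthen the inductive statement — for example by carrying an $n$-dependent budget together with a per-disk radius bound, and letting the reduced parameter in $E_{(n-\nu_0)a'}(\tilde p)$ vary — so that the radii of the peeled disks do not accumulate beyond $O(H)$ and, when the zeros are spread out rather than clustered, are in fact forced to be very small. This is the heart of Cartan's argument and is where essentially all the work lies; the potential-theoretic reading above serves as a consistency check and suggests the alternative route of deducing the lemma from the general fact that a set of logarithmic capacity $\rho$ admits a finite cover with $\sum_j r_j^{\alpha}\le e(2\rho)^{\alpha}$, itself provable via Fekete points and the same estimate.
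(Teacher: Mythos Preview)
The paper does not prove this lemma: it is stated and then the reader is referred to \cite{Lub} for the proofs of both Cartan's and P\'olya's lemmas. So there is no ``paper's own proof'' to compare against here.

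As for your proposal itself: the general shape --- factor the monic polynomial, observe the crude cover $E_{na}(p)\subset\bigcup_k B(z_k,H)$, and then try to merge clusters of zeros into single disks --- is indeed the spirit of Cartan's argument. But what you have written is a plan, not a proof, and you say so explicitly: ``This is the heart of Cartan's argument and is where essentially all the work lies.'' The induction you actually set up does not close. With your choice of $D_0$ of radius $\nu_0 H$ and $\widehat D_0$ of radius $(\nu_0+1)H$, the contribution of $\widehat D_0$ alone is $((\nu_0+1)H)^\alpha$, and adding the inductive budget $e(2H)^\alpha$ for the remaining $n-\nu_0$ zeros already exceeds $e(2H)^\alpha$ whenever $\nu_0\ge 2$. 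Your remark that one must ``strengthen the inductive statement'' and ``let the reduced parameter \dots vary'' is exactly right, but you have not said how, and that is the entire content of the lemma. The standard route (see e.g.\ Levin's book or the Lubinsky survey the paper cites) does not proceed by induction on $n$ in this way; rather one constructs the whole family of exceptional disks at once by a rank-based selection: order the zeros, and for each $k=1,\dots,n$ look for the smallest disk of radius $kH/n$ containing $\ge k$ zeros, then show that outside the resulting disks every factor $|z-z_k|$ can be bounded below by its rank, which produces the $n!$ and hence the factor $e$ via Stirling. Until you carry out some version of that counting, the proposal remains a sketch with the essential step missing.
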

In particular, taking  $\alpha=2$ one obtains that $|E_{na}(p)|\le 4\pi e^{1-2a}$. This estimate is not sharp as the next result shows.
\begin{lemma}[Polya, 1928]
Let $p\in\p_n$ then $|E_{na}(p)|\le \pi e^{-2a}$ for any $a>0$. 
\end{lemma}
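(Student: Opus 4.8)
The plan is to realize $E_{na}(p)=\{z:|p(z)|<e^{-na}\}$ as a sublevel set of $|p|$ and to bound its planar area through the logarithmic capacity. Write $t=e^{-na}$. First I would pass from $E_{na}(p)$ to the compact lemniscate set $E=\{z:|p(z)|\le t\}$: since $p$ is a non-constant polynomial the level curve $\{|p|=t\}$ has zero planar measure, so $|E_{na}(p)|=|E|$, and it suffices to show $|E|\le \pi t^{2/n}$.

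The argument then rests on two classical facts about logarithmic capacity. \emph{First}, $\operatorname{cap}(E)=t^{1/n}$. Indeed every zero of $p$ lies in $E$, so on the unbounded component $\Omega$ of $\C\setminus E$ the function $g(z)=\tfrac1n\log(|p(z)|/t)$ is nonnegative and harmonic, vanishes on $\partial\Omega$, and — because $p$ is monic of degree $n$ — satisfies $g(z)=\log|z|-\tfrac1n\log t+o(1)$ as $z\to\infty$; hence $g$ is the Green function of $\Omega$ with pole at infinity, and comparing with the normalization $\log|z|-\log\operatorname{cap}(E)+o(1)$ gives $\operatorname{cap}(E)=t^{1/n}$. (To avoid connectivity subtleties for $\Omega$ one may instead use the Chebyshev description of capacity: applying the maximum principle on $\Omega$ to $q/p^k$ shows that among monic polynomials of degree $kn$ the sup-norm over $E$ is minimized by $p^k$ with value $t^k$, whence $\operatorname{cap}(E)=\lim_k(t^k)^{1/(kn)}=t^{1/n}$.) \emph{Second}, the isoperimetric inequality for logarithmic capacity: $|K|\le\pi\operatorname{cap}(K)^2$ for every compact $K\subset\C$, with equality for disks. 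This is classical and follows, for instance, from the fact that Schwarz symmetrization preserves area and does not increase logarithmic capacity, combined with $\operatorname{cap}(\text{disk of radius }r)=r$.

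Combining the two ingredients finishes the proof: $|E_{na}(p)|=|E|\le\pi\operatorname{cap}(E)^2=\pi t^{2/n}=\pi e^{-2a}$.

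The genuine content lies entirely in the two classical extremal statements above, in both of which the disk is the extremizer — this is also where the sharp constant $\pi$ comes from, in contrast to the cruder constant coming out of Cartan's lemma, or of a direct rearrangement of $\log|p|$ over the sublevel set (which loses a factor $e$ and only reproduces a Cartan-type bound). So the only real decision in a self-contained write-up is how much potential theory to invoke: either cite the lemniscate capacity identity and $|K|\le\pi\operatorname{cap}(K)^2$ as known, or include the short detour above, proving the first via the Chebyshev characterization and the second via symmetrization.
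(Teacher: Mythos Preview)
Your argument is correct, and the two classical ingredients you isolate --- the identity $\operatorname{cap}(\{|p|\le t\})=t^{1/n}$ for monic $p$ of degree $n$, and the isoperimetric inequality $|K|\le\pi\operatorname{cap}(K)^2$ --- are exactly what is needed. The paper itself does not prove Polya's lemma; it simply refers the reader to Lubinsky's survey for both Cartan's and Polya's lemmas, so there is no in-text proof to compare your approach against.

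One point is worth flagging, however. The paper's later exercise on logarithmic capacity (part~(c)) asks the reader to \emph{derive} $|K|\le\pi\operatorname{cap}(K)^2$ \emph{from} Polya's lemma, via the Chebyshev description of capacity --- precisely the reverse of your route. So within the logical flow of these notes, citing $|K|\le\pi\operatorname{cap}(K)^2$ as known and deducing Polya's lemma from it would be circular. You avoid this by sketching an independent proof of the isoperimetric inequality via symmetrization, which is the right move, but be a little more careful about which symmetrization you mean: applied directly to a compact set, ``Schwarz symmetrization'' (spherical rearrangement) simply replaces $K$ by the disk of the same area, and the claim that this does not increase capacity \emph{is} the inequality you are trying to prove. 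The standard non-circular route is Steiner symmetrization with respect to a line --- P\'olya and Szeg\H{o} showed this preserves area and does not increase logarithmic capacity --- iterated over a dense set of directions to converge to a disk. Alternatively, P\'olya's original 1928 argument uses the area theorem for the conformal map from the exterior of the unit disk to the exterior of the lemniscate, which also yields the sharp constant~$\pi$.
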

The last inequality is sharp, the equality is obtained when $p(z)=z^n$.

Lemmas of Cartan and Polya deal with polynomials for which the leading coefficient is equal to one and provide estimates of the set of all points of the complex plane where the polynomial is small, the proofs of both lemmas and related results can be found in \cite{Lub}. We are interested in a local version of such estimate. 


\subsection{Remez' inequality for polynomials}
Now we consider polynomials with real coefficients on the real line and do not normalize the leading coefficient.

\begin{lemma}[Remez, 1936]
Let $E$ be a measurable subset of an interval $I$ of positive measure,  $|E|>0$. Then for any polynomial $P_n\in \R[x]$ of degree $n$ 
\[\max_{x\in I}|P_n(x)|\le \left(\frac{4|I|}{|E|}\right)^n\max_{x\in E}|P_n(x)|\]
\end{lemma}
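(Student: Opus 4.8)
The plan is to reduce the general statement to the extremal case, where $E$ is a subinterval of $I$ placed at one end, and the extremal polynomial is a rescaled Chebyshev polynomial. First I would normalize: by an affine change of variable we may take $I=[-1,1]$. Then set $s=|E|$ and note $s\le 2$; the quantity $4|I|/|E| = 8/s$ will be the base of the exponential. The key structural fact is that among all measurable sets $E\subset[-1,1]$ of measure $s$, and all polynomials $P_n$ of degree $n$, the ratio $\max_{I}|P_n|/\max_{E}|P_n|$ is maximized when $E$ is an interval at the boundary, say $E=[1-s,1]$, and $P_n$ is (up to scaling) the Chebyshev polynomial $T_n$ composed with the affine map sending $[1-s,1]$ onto $[-1,1]$. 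The core of the argument is a comparison/extremal principle: if $|P_n|\le M$ on $E$, I would argue that $|P_n|\le M\cdot|T_n(\ell(x))|$ for $x\notin E$, where $\ell$ is the linear map taking the convex hull of $[-1,1]\setminus$(interior of the hull of $E$) appropriately; this uses that $T_n$ grows fastest outside $[-1,1]$ among degree-$n$ polynomials bounded by $1$ there (the classical Chebyshev growth/Markov-type extremal property).

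More concretely, the steps I would carry out are: (1) replace $E$ by its "worst case" — observe that shrinking $E$ to a subinterval only makes the bound harder, so it suffices to prove the inequality when $E=[a,a+s]$ is an interval; by symmetry and monotonicity, the worst placement is at an endpoint, $E=[1-s,1]$. (2) For such $E$, let $T_n$ be the degree-$n$ Chebyshev polynomial, normalized so $|T_n|\le 1$ on $[-1,1]$, and let $L$ be the affine bijection $[1-s,1]\to[-1,1]$, so $L(x)=\frac{2x-(2-s)}{s}$, which has $|L'|=2/s$. Then $Q(x)=T_n(L(x))$ satisfies $|Q|\le 1$ on $E$ and, since $L$ maps the far endpoint $x=-1$ to $L(-1)=\frac{-2-(2-s)}{s}=\frac{s-4}{s}$, whose absolute value is $(4-s)/s \ge 1$, the value $|Q(-1)|=|T_n((s-4)/s)|$ is the maximum over $I$. (3) Estimate $|T_n(y)|$ for $|y|=t\ge 1$ by $|T_n(t)|=\cosh(n\,\mathrm{arccosh}\,t)\le (t+\sqrt{t^2-1})^n\le (2t)^n$. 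With $t=(4-s)/s$ this gives $|Q(-1)|\le (2(4-s)/s)^n\le (8/s)^n=(4|I|/|E|)^n$. (4) Finally, the extremal property of $T_n$: for an arbitrary $P_n$ with $\max_E|P_n|=M$, the polynomial $P_n/M$ is bounded by $1$ on the interval $E$, hence (after transplanting via $L$) is a degree-$n$ polynomial bounded by $1$ on $[-1,1]$, so by the Chebyshev extremal inequality $|P_n(x)/M|\le |T_n(L(x))|$ for all $x$ outside $E$; taking the max over $x\in I$ yields $\max_I|P_n|\le M\,(8/s)^n$.

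The step I expect to be the main obstacle is step (4), the rigorous justification that $|T_n|$ dominates every competitor outside the interval of normalization — i.e. the statement that if $\deg R\le n$ and $|R|\le 1$ on $[-1,1]$ then $|R(y)|\le |T_n(y)|$ for $|y|\ge 1$. The clean proof is by contradiction: if $|R(y_0)|>|T_n(y_0)|$ for some $y_0$ with $|y_0|>1$, consider $c\,T_n - R$ with $c=R(y_0)/T_n(y_0)$; this kills the value at $y_0$ but, because $T_n$ equioscillates between $\pm 1$ at its $n+1$ extrema in $[-1,1]$ while $|R|\le 1$ there, a sign-change count forces $c\,T_n-R$ to have at least $n$ zeros in $(-1,1)$ plus the zero at $y_0$, i.e. $n+1$ zeros for a polynomial of degree $\le n$ — forcing it to vanish identically, a contradiction (one has to handle the case $|c|\le 1$ versus $|c|>1$ and the endpoints carefully, which is the only delicate bookkeeping). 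Once this extremal lemma is in hand, the rest is the elementary estimate of step (3) and the affine bookkeeping of step (2); the reduction in step (1) to a boundary interval is a short monotonicity argument. An alternative that sidesteps the Chebyshev extremal lemma is a direct Lagrange-interpolation estimate: pick $n+1$ points in $E$ where $P_n$ can be controlled and write $P_n(x)$ via its interpolation formula, bounding the Lagrange basis polynomials at $x\in I$ — but making the measure-to-point-spacing estimate sharp enough to get the constant $4$ is itself somewhat involved, so I would favor the Chebyshev route.
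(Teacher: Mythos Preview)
Your step (1) is the gap. You write ``shrinking $E$ to a subinterval only makes the bound harder,'' but this does not do what you need. If $E'\subset E$ is a genuine subinterval then typically $|E'|<|E|$ (a measurable set of measure $s$ need not contain any interval of measure anywhere near $s$), so proving the bound $(4|I|/|E'|)^n$ for $E'$ is strictly weaker than the target $(4|I|/|E|)^n$. And if instead you mean replacing $E$ by some interval of the \emph{same} measure, then $\max_{E'}|P_n|$ and $\max_E|P_n|$ are not comparable a priori. The passage from an arbitrary measurable set to an interval is not a monotonicity triviality; it is exactly the substance of Remez's theorem, and your steps (2)--(4), while correct for the interval case, presuppose it.

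The paper's route is precisely the one you set aside as an ``alternative.'' After normalizing to $I=[-1,1+c]$ with $|E|=2$, one selects points $y_0<\cdots<y_n$ in $E$ with $|E\cap[-1,y_k]|=1+x_{n,k}$, where the $x_{n,k}$ are the Chebyshev extrema in $[-1,1]$; this guarantees both $y_k-y_j\ge x_{n,k}-x_{n,j}$ and $y_k\ge x_{n,k}$. Writing $P(1+c)$ by Lagrange interpolation at the $y_k$ and $T_n(1+c)$ by Lagrange interpolation at the $x_{n,k}$, every term in the first sum is termwise dominated by the corresponding (positive) term in the second, giving $|P(1+c)|\le T_n(1+c)\max_E|P|$ directly for general $E$. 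The estimate $T_n(2x-1)\le(4x)^n$ then yields the constant $4$. So the ``measure-to-point-spacing estimate'' you worried about is a one-line construction, and it replaces rather than follows the reduction you attempted in step (1); your Chebyshev extremal lemma in step (4) is never needed.
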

More precise inequality and its proof is outlined in the exercises below, see Exercise \ref{ex:Rzin}. We reformulate the inequality in the following way
\[|E|\le 4|I|\left(\frac{\max_{x\in E}|P_n(x)|}{\max_{x\in I}|P_n(x)|}\right)^{1/n},\]
when $E\subset I$. 
We normalize $P_n$ such that $\max_I|P_n|=1$ and use the notation
\[E_{an}(P_n)=\{x\in \R: |P_n(x)|<e^{-an}\}.\]
Then the Remez inequality can be written as
\[|E_{an}(P_n)\cap I|\le 4|I|e^{-a}.\]
There are interesting generalizations of the Remez inequality, in particular the measure of the set can be replaced by another geometric characteristic; higher dimensional version are also known, we refer the reader to \cite{E, BY}. 


\subsection{Propagation of smallness result}
The main result we prove in these lectures is the following version of quantitative propagation of smallness for solutions of elliptic equation in divergence form. As above we assume that $\dv(A\nabla\cdot)$ is a uniformly elliptic operator, $A$ is a symmetric matrix with Lipschitz coefficients on some domain in $\R^d$. We know that a solution to $\dv(A\nabla h)=0$ can not vanish on a set of positive measure (see for example Remark after Exercise \ref{ex:revH}) and look for a quantitative version of this result.

\begin{theorem}[\cite{LM}]\label{th:Main}
 Let $h$ be a solution of
$\dv(A\nabla h)=0$ in $\Omega$.
 Assume that
\[|h|\le \varepsilon\quad {\text{ on}}\quad E\subset\Omega,\]
where $|E|>0$. Let  $K$ be a compact subset of $\Omega$ then
\begin{equation}\label{eq:ellRem}
\max_K|h|\le C_0\sup_{\Omega}|h|^{1-\alpha} \varepsilon^{\alpha},
\end{equation}
where $C_0>0$ and $\alpha\in(0,1)$ depend on $A, |E|, \dist(E,\partial\Omega)$, and $K$.
\end{theorem}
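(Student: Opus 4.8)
The plan is to localise the estimate and reduce it to a Remez-type inequality on a single cube, in which the degree of a polynomial is played by the doubling index of $h$.

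\textit{Localisation.} It suffices to prove the estimate with $K$ replaced by a cube $Q_*$ on which $E$ has a definite density, and then propagate to $K$. To produce $Q_*$: with $r_0$ a small multiple of $\dist(E,\partial\Omega)$, tile $\R^d$ by a grid of cubes of side $r_0$; each grid cube $Q$ meeting $E$ has $\overline{2Q}\subset\Omega$, and (taking $\Omega$ bounded, so that only finitely many grid cubes meet $E$) pigeonholing yields a grid cube $Q_*$ with $|E\cap Q_*|\ge\gamma_0|Q_*|$, where $\gamma_0>0$ depends only on $|E|$, $\dist(E,\partial\Omega)$ and $\Omega$. So the theorem reduces to the following local statement: if $Lu=\dv(A\nabla u)=0$ near a cube $Q$ with $\overline{2Q}\subset\Omega$ and $G\subset Q$ has $|G|\ge\gamma|Q|$, then
\[
\max_Q|u|\le C\Bigl(\max_{2Q}|u|\Bigr)^{1-\mu}\Bigl(\max_G|u|\Bigr)^{\mu},
\]
with $\mu\in(0,1)$ and $C$ depending only on $\gamma$, the dimension and the ellipticity and Lipschitz constants of $A$ --- and, crucially, \emph{not} on the doubling index. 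Granting this, applying it with $u=h$, $Q=Q_*$, $G=E\cap Q_*$ gives $\max_{Q_*}|h|\le C(\sup_\Omega|h|)^{1-\mu}\varepsilon^{\mu}$, and then the chain argument behind Corollary~\ref{cor:3ball} propagates the bound from $Q_*$ to $K$, the length of the chain (hence the final exponent $\alpha$) being controlled by $K$, $\Omega$ and $\dist(E,\partial\Omega)$.

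\textit{The local inequality: bounded doubling index.} Normalise $\max_{2Q}|u|=1$ and put $N=\n_u(2Q)$, finite by \eqref{eq:double-def}, \eqref{eq:mondouble}. If $N\le N_0$ for a fixed threshold, $u$ is polynomial-like: after the change of variables making $A=I$ at the centre, $u$ is a small perturbation of a harmonic function, and the almost-monotonicity of the frequency function (Proposition~\ref{pr:mon}, Corollary~\ref{cor:Double}) shows that on a cube $Q'=\delta Q$ with $\delta\sim\gamma$ the function $u$ differs from a harmonic polynomial $P$ of degree $\lesssim N_0$ by at most $(\gamma/C)^{N_0}$ in sup norm. Hence $|P|\le\max_G|u|+(\gamma/C)^{N_0}$ on $G\cap Q'$, a set of measure $\gtrsim\gamma|Q'|$; restricting $P$ to lines parallel to $e_1$ and using Fubini (a positive $(d-1)$-measure of such lines meets $G\cap Q'$ in length $\gtrsim\gamma\,s(Q')$), the classical Remez inequality on each such line gives $\max_{Q'}|P|\le(C/\gamma)^{CN_0}\bigl(\max_G|u|+(\gamma/C)^{N_0}\bigr)$, so $\max_{Q'}|u|\lesssim(C/\gamma)^{CN_0}\max_G|u|$, and finally Corollary~\ref{cor:3ball1} returns the estimate on $Q$ with exponent $\mu=\mu(\gamma,N_0)$.

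\textit{The local inequality: large doubling index, and the main obstacle.} Here I would argue by induction on dyadic blocks of $N$. Partition $Q$ into $A^d$ congruent subcubes, $A$ a large dimensional constant. The key input is a subdivision lemma for the doubling index --- a ``hyperplane lemma'' in the spirit of the companion work on nodal sets --- guaranteeing a subcube $q$ on which $\n_u(2q)\le N/2$, while $G\cap q$ still carries a definite fraction of the measure of $G$ (this uses $|G|\ge\gamma|Q|$ and the fact that the subcubes where the doubling index fails to drop occupy only a lower-dimensional portion of the partition once $A$ is large) and $\max_q|u|$ is comparable to a fixed power of $\max_Q|u|$. Applying the inductive hypothesis on $q$ and climbing back to $Q$ through Corollary~\ref{cor:3ball1} one closes the induction with an exponent $\mu$ bounded below \emph{uniformly in $N$}. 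Arranging that each halving of the doubling index costs only a summable (rather than a fixed multiplicative) loss in the exponent --- so that $\mu$ does not tend to $0$ as $N\to\infty$ --- is the real difficulty; it is precisely what makes $\alpha$ in \eqref{eq:ellRem} independent of the doubling index, which is unbounded over the class of all solutions.
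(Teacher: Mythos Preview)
Your architecture --- localise to a cube where $E$ has definite density, prove a local three-term inequality with exponent independent of the doubling index, then chain to $K$ --- matches the paper. The genuine gap is in the induction step for large $N$. You want a subcube $q$ with $\n_u(2q)\le N/2$ \emph{and} $|G\cap q|\gtrsim\gamma|q|$, and you justify this by claiming that the bad subcubes (where the index fails to drop) occupy only a lower-dimensional portion of the partition. But the available additivity lemma (Lemma~\ref{l:LM1}, Corollary~\ref{cor:cubes}) guarantees only \emph{one} good subcube; it gives no control on the number or geometry of the bad ones, and an arbitrary measurable $G$ may well sit entirely inside bad subcubes. The ``hyperplane lemma'' from the nodal-set papers runs in the opposite direction --- large index along a full hyperplane slice forces large index on the ambient cube --- and does not bound the bad set either. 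So the induction as you describe it does not close, and your own closing sentence correctly flags that you have not produced the mechanism that keeps $\mu$ bounded away from $0$.

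The idea you are missing is to stop tracking a fixed set $G$ and reformulate everything in terms of sublevel sets. One proves instead (Lemma~\ref{l:R}) that
\[
\bigl|\{x\in Q:|u(x)|<e^{-a}\sup_Q|u|\}\bigr|\le Ce^{-\beta a/N}|Q|,
\]
which is equivalent to \eqref{eq:RL} and hence to the local three-term inequality you want. Writing $M(N,a)$ for the supremum of the left side over all admissible $u$, the single good subcube from Corollary~\ref{cor:cubes} already yields the recursion
\[
M(N,a)\le M(N/2,\,a-a_0N)+\bigl(1-J^{-d}\bigr)M(N,\,a-a_0N),
\]
and the factor $1-J^{-d}<1$ on the second term is precisely what makes the double induction in $(N,a)$ converge with $\beta$ independent of $N$. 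No set needs to land in the good subcube; the good subcube simply peels off a fixed fraction of the sublevel measure at each step.

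A smaller issue: your base case (bounded $N$) via harmonic-polynomial approximation and line-by-line Remez is sound for harmonic functions, but for divergence-form operators with merely Lipschitz $A$ the polynomial approximation is not delivered by Proposition~\ref{pr:mon} or Corollary~\ref{cor:Double}, which control only the doubling index. The paper handles bounded $N$ directly from the oscillation theorem (Lemma~\ref{l:base}): iterating ``on each subcube $\sup|u|$ is bounded below, and on at least one subcube $\inf|u|$ is bounded below'' gives geometric decay of the sublevel set without any analyticity or approximation.
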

The inequality \eqref{eq:ellRem} can be considered as a version of three balls theorem where the smallest ball is replaced by a measurable set. The constants in the inequality depend on the measure of the set and the distance from this set to the boundary of $\Omega$ but not on the set itself, which could be an arbitrarily wild measurable set. The question whether such inequality holds was asked by Landis, weaker quantitative estimates were obtained by Nadirashvili \cite{N} and Vessella \cite{V}.

We formulate first the following result (Remez inequality for solutions of elliptic PDE, \cite{LM}):\\
{\it{ Let $Q$ be the unit cube in $\R^d$. Assume $h$ is a solution to the equation $\dv(A\nabla h)=0$ in $2Q$ and define the doubling index $N=\n_h(Q)$ as in \eqref{eq:double-def}.
 Then for any subset $E$ of $Q$ of positive Lebesgue measure
\begin{equation}\label{eq:RL}
\sup_{Q}|h|\le C \sup_E|h|\left(C\frac{|Q|}{|E|}\right)^{CN}
\end{equation}
where $C$ depends on $A$ only.}}

This statement confirms that in some sense solutions of elliptic equations locally behave as polynomials with degree bounded by the multiple of the doubling index. In particular (the lift of) an eigenfunction corresponding to eigenvalue $\lambda$ behaves as a polynomial of degree $C\sqrt{\lambda}$. This was pointed out in the works of Donnelly and Fefferman, see for example \cite{DF2}, where in particular an interesting Bernstein type inequality for eigenfunctions is obtained.

Let us show that \eqref{eq:RL} implies Theorem \ref{th:Main}. First we remind that by \eqref{eq:invdouble}
\[\exp(a_1N)\le e^{a_2}\sup_{2Q}|h|(\sup_{Q}|h|)^{-1},\]
for some $a_1,a_2>0$.
Suppose that  \eqref{eq:RL} holds with some constant $C$ and  choose $C_1=C_1(|E|)$ such that
\[\left(C\frac{|Q|}{|E|}\right)^C=e^{a_1C_1},\quad \text{i.e.}\ C_1=Ca_1^{-1}\log(C|Q||E|^{-1}).\] 
Then
\[
\sup_Q|h|\le C\sup_E|h|\exp(a_1C_1N)\le C_2\sup_E|h|\left(\sup_{2Q}|h|\right)^{C_1}\left(\sup_Q|h|\right)^{-C_1}.\]
This implies the inequality in the theorem for the case $\Omega=2Q$ and $K=Q$ with $\alpha=(C_1+1)^{-1}$ and $C_0$ that depends on $|E|$ and on $A$ but not on $h$. To obtain the statement of the theorem we use the standard chain argument as in the proof of Corollary \ref{cor:3ball}.

In its turn the inequality \eqref{eq:RL} is equivalent to the following local estimate of the volume of sub-level sets. We recall that $Q$ is the unit cube.
\begin{lemma} \label{l:R} Suppose that $\dv(A\nabla h)=0$ in $2Q$ and $\sup_{Q}|h|=1$. Let  further $N =\n_h(Q) \geq 1$ and
\[E_a(h)=\{x\in Q: |h(x)|<e^{-a}\}.\] 
Then 
\begin{equation}\label{eq:RPDE}
|E_a(h)|\le Ce^{-\beta a/N}|Q|,
\end{equation}
for some positive $C$ and $\beta$ that depend on $A$ only. 
\end{lemma}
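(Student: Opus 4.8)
The plan is to reduce the $d$-dimensional estimate \eqref{eq:RPDE} to a one-variable Remez-type inequality for the restriction of $h$ to line segments, and then recover the full statement by an integration in polar coordinates. Normalize $\sup_Q|h|=1$. Running the chain argument of Corollary~\ref{cor:3ball}, fed by the three spheres inequality (Proposition~\ref{pr:3sph}), along a path from a point where $|h|=1$ to the center of $\tfrac12 Q$ and keeping track of constants, one obtains $\sup_{\frac12 Q}|h|\ge e^{-C_1N}$; pick $x_0\in\tfrac12 Q$ with $|h(x_0)|\ge e^{-C_1N}$. For $\omega\in S^{d-1}$ let $\ell_\omega$ be the intersection of $Q$ with the ray $\{x_0+r\omega:r\ge 0\}$; each $\ell_\omega$ has length between $\tfrac14$ and $\sqrt d$, endpoint value $|h(x_0)|\ge e^{-C_1N}$, and, writing $z=x_0+r\omega$ so that $dz=r^{d-1}\,dr\,d\omega$ with $0<r<\sqrt d$,
\[
|E_a(h)|\ \le\ C_d\int_{S^{d-1}}\bigl|\{t\in\ell_\omega:\ |h(t)|<e^{-a}\}\bigr|\,d\omega .
\]
Hence it suffices to bound, for each segment $\ell\subset Q$ of length comparable to $1$ on which $\sup_\ell|h|\ge e^{-C_1N}$,
\[
\bigl|\{t\in\ell:\ |h(\ell(t))|<e^{-a}\}\bigr|\ \le\ C\,e^{-\beta a/N}\,|\ell| .
\]
When $a\le 2C_1N$ this is trivial, so we may take $a$ large, in which case $e^{-a}\le e^{-a/2}\sup_\ell|h|$ and the task becomes a genuine Remez-type inequality for $g:=h|_\ell$ at threshold $e^{-a/2}\sup_\ell|g|$.

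The core of the proof is therefore this one-dimensional inequality, and the guiding principle is that a solution restricted to a line \emph{behaves like a polynomial of degree of order $N$}, so that the classical Remez inequality stated above applies with effective degree $CN$. The doubling index realizes this: by the almost-monotonicity \eqref{eq:mondouble} the doubling index of $h$ on all small balls centered near $\ell$ is $\le C(N+1)$, which bounds the vanishing order of $h$ at every point of $\ell$ by $CN$ and, passing between consecutive zeros of $g$, forces the polynomial-type two-sided bound $\max_{2I}|h|\le e^{CN}\max_I|h|$ on sub-intervals $I\subset\ell$. Granting that $g$ has at most $CN$ zeros on $\ell$ (counted with multiplicity), one factors $g=p\cdot q$ with $p$ the monic polynomial vanishing at those zeros and $q$ non-vanishing, uses the two-sided bounds together with the equivalence of sup- and $L^2$-norms (Corollary~\ref{cor:norms}) to trap $q$ between two positive constants on $\ell$, and applies Cartan's covering lemma to $p$: the sub-level set $\{|g|<e^{-a/2}\sup_\ell|g|\}$ then lies in a union of intervals of total length $\le C e^{-\beta a/N}$, as required.

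I expect the assertion that $g=h|_\ell$ behaves like a polynomial of degree of order $N$ — in particular the zero count and the lower bound on the non-vanishing factor $q$ — to be the main obstacle. In the real-analytic case it is clean: $g$ extends holomorphically to a complex neighborhood of $\ell$, and Jensen's formula converts the doubling index into a bound $\le CN$ on the number of zeros in a disc. For merely Lipschitz $A$, however, $g$ is neither a polynomial nor holomorphically extendable, so "degree $\le CN$" must be replaced by a quantitative surrogate extracted solely from the doubling index (or one reduces to the analytic setting by an approximation that preserves the doubling index). A secondary but unavoidable bookkeeping issue is the preliminary local change of variables normalizing $A(0)=I$ along each segment, arranged so that all constants depend only on the ellipticity and Lipschitz data of $A$, exactly as in the passage from Proposition~\ref{pr:3sph} to Corollaries~\ref{cor:3ball1} and \ref{cor:3ball}.
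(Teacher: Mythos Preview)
Your reduction to line segments via polar coordinates is fine, but the core one-dimensional step is where the argument fails. For merely Lipschitz $A$ the restriction $g=h|_\ell$ is not real-analytic, so the factorization $g=p\cdot q$ with $p$ a polynomial carrying the zeros and $q$ non-vanishing simply does not exist; there is no Weierstrass-type preparation available. Even the preliminary claim that $g$ has at most $CN$ zeros on $\ell$ is not something you can extract from the doubling index alone in the Lipschitz setting: the bound on vanishing order at each point, which you correctly get from \eqref{eq:mondouble}, says nothing about the total number of zeros along a segment, and in fact controlling that count is essentially as hard as the lemma itself. You acknowledge this as ``the main obstacle'' and suggest either a ``quantitative surrogate'' or approximation by analytic coefficients, but neither is carried out; the surrogate would have to be invented, and analytic approximation does not obviously preserve the doubling index uniformly.

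The paper's proof is completely different and bypasses any one-dimensional reduction. It works directly in $\R^d$ via a double induction on $a$ and $N$: the base cases $a\le c_0N$ and $N\le N_0$ are handled by the oscillation theorem (Lemma~\ref{l:base}), and the induction step rests on a combinatorial fact about the doubling index (Corollary~\ref{cor:cubes}): when $Q$ is partitioned into $J^d$ equal subcubes, at least one subcube has doubling index $\le N/2$. This yields the recursive inequality $M(N,a)\le M(N/2,a-a_0N)+sM(N,a-a_0N)$ with $s<1$, which is then shown to force the exponential bound. No restriction to lines, no zero-counting, and no analyticity enter.
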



\subsection{Base of induction}\label{ss:base}
 We prove Lemma \ref{l:R} in the next section using double induction in $a$ and $N$. Now  we check the base of induction, considering two cases  $a\le c_0N$ and $N\le N_0$. 

Our aim is to prove the inequality (\ref{eq:RPDE}). First we note that for $a/N<c_0$ the inequality holds trivially, indeed if we choose the constant $C=C(\beta)$ large enough, we get \[Ce^{-\beta a/N}\ge Ce^{-\beta c_0}\ge 1.\]

Now we want to show that (\ref{eq:RPDE}) holds for some $\beta$ and $C$ if we assume that $N$ is small enough.
 The lemma below  is the base of our induction in $N$.

\begin{lemma}\label{l:base}  Assume that $h$ satisfies $\dv(A\nabla h)=0$ in $k_dQ$,  $\sup_{Q}|h|=1$ and $\n_h(Q) \leq N_0$. Let 
$E_a=\{x\in Q: |h(x)|<e^{-a}\}.$ 
Then 
\begin{equation}\label{eq:baseN}
|E_a|\le C e^{-\gamma a}|Q|,
\end{equation}
for some $\gamma=\gamma(N_0, A)$ and $ C =C (N_0, A)$.
\end{lemma}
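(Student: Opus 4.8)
The plan is to reduce the statement to the classical Remez inequality for polynomials in one variable by means of the three-ball inequality and the bounded doubling index. First I would normalize so that $\sup_Q|h|=1$ and recall, via the chain argument behind Corollary~\ref{cor:3ball} (or directly \eqref{eq:invdouble}), that $\n_h(Q)\le N_0$ forces a two-sided control $\sup_{k_dQ}|h|\le C(N_0)$ on a slightly larger cube; this keeps all later constants depending only on $N_0$ and $A$. The key geometric idea is that for $x\in E_a$ the value $|h(x)|$ is exponentially small, so $h$ is small on a definite-measure subset of $Q$, and I want to upgrade this into smallness of $\sup_Q|h|$ — but that contradicts the normalization unless $|E_a|$ itself is exponentially small in $a$.

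The main step is a \emph{one-dimensional slicing} argument. Partition $Q$ into a grid of axis-parallel lines in, say, the $x_1$-direction; by Fubini, if $|E_a|\ge \delta|Q|$ then on a set of lines $\ell$ of measure $\gtrsim\delta$ the one-dimensional slice $E_a\cap\ell$ has length $\gtrsim\delta\, s(Q)$. Along such a line, restrict $h$; although $h|_\ell$ is not a polynomial, the bounded doubling index of $h$ lets me approximate it, on a sub-interval, by a polynomial of degree $\lesssim N_0$ with an error controlled by the three-ball/Bernstein-type estimates available for solutions (this is exactly the philosophy that ``solutions behave like polynomials of degree $\sim N$'' invoked after \eqref{eq:RL}). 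Applying the Remez inequality to that polynomial, using that it is $\le e^{-a}+(\text{small error})$ on a set of relative measure $\gtrsim\delta$ inside the interval, I get $\max_\ell |h| \le (C/\delta)^{CN_0}(e^{-a}+\text{error})$. Combining the good lines and using the three-ball inequality to propagate back to a fixed interior ball of $Q$ where $\sup|h|$ is comparable to $1$, I obtain $1\le (C/\delta)^{CN_0} e^{-a}$ up to harmless factors, i.e. $\delta\le C e^{-\gamma a}$ with $\gamma=\gamma(N_0,A)$, which is \eqref{eq:baseN}.

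An alternative I would keep in reserve, avoiding the polynomial approximation entirely: iterate the three-ball inequality of Corollary~\ref{cor:3ball1} directly. If $E_a$ is not too small, a Vitali-type covering produces a ball $B\subset Q$ of radius $\sim\delta^{1/d}$ on which $\max_B|h|$ is comparable to $\max_{B^{*}}|h|$ (by bounded doubling) yet $h$ is small on a definite fraction of $B$; applying a measure-theoretic three-ball step $\sim \log(1/\delta)$ times and using $\n_h\le N_0$ at each scale gives $\sup_Q|h|\le C(N_0)\varepsilon^{c}$ with $c\sim 1/\operatorname{polylog}$, which again is incompatible with the normalization unless $a\lesssim \log(1/\delta)$. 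The hard part will be making the polynomial approximation (or, in the alternative, the repeated measure-theoretic three-ball step) genuinely quantitative with constants depending only on $N_0$ and the ellipticity/Lipschitz data of $A$ — in particular controlling the approximation error uniformly over all lines and over the wild set $E_a$, and ensuring the covering/chain arguments do not secretly depend on $a$.
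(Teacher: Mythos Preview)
Your proposal has a genuine gap in the main approach, and the paper's route is both different and much simpler.

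The problematic step is the polynomial approximation on one-dimensional slices. You invoke the heuristic that ``solutions behave like polynomials of degree $\sim N$,'' but that heuristic is precisely what \eqref{eq:RL} encodes, and Lemma~\ref{l:base} is the base case in the induction that \emph{proves} \eqref{eq:RL}. So you cannot appeal to it here. To make your slicing argument rigorous you would need an independent quantitative polynomial approximation: a polynomial $p$ of degree $\lesssim N_0$ with $\sup_\ell|h-p|$ small compared to $e^{-a}$, uniformly in $a$. For harmonic functions this is available via Taylor expansion (and indeed Exercise~\ref{ex:Rzin}+1 in the notes runs exactly this argument), but for $\dv(A\nabla h)=0$ with merely Lipschitz $A$ solutions are only $C^{1,\alpha}$, and no such approximation of controlled degree and error is available. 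Your alternative route has the same circularity problem: a ``measure-theoretic three-ball step'' is essentially the content of Theorem~\ref{th:Main}, which is downstream of this lemma.

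The paper bypasses all of this with the oscillation theorem (De Giorgi--Nash--Moser). The point is that bounded doubling index $\n_h(Q)\le N_0$ gives $\sup_{q}|h|\ge b(N_0)$ on every subcube of a $K^d$ partition, while the oscillation estimate $\osc_{q_0}h\le \tau(2/K)\osc_Q h$ forces, for $K$ large, one subcube $q_0$ on which $\inf_{q_0}|h|\ge m(N_0)>0$. Hence $|\{|h|<m\}|\le (1-K^{-d})|Q|$. Now iterate: on each remaining subcube the hypotheses persist (doubling is monotone under shrinking), so after $l$ steps $|\{|h|<mb^{l}\}|\le (1-K^{-d})^{l+1}|Q|$, which is exactly \eqref{eq:baseN} with $\gamma$ determined by $b^{\gamma}=1-K^{-d}$. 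No Remez, no slicing, no polynomial approximation---just oscillation decay plus a geometric iteration.
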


The estimate on the doubling index implies that $\sup_{1/2Q}|h|\ge C(N_0)$. We combine this inequality with the  oscillation theorem (see Theorem \ref{th:osc} in Appendix). Recall that $\osc_{Q}h=\sup_Qh-\inf_Qh.$
\begin{theorem}
Let $L=\dv(A\nabla\cdot)$ be a uniformly elliptic operator in $\Omega$ and $Lh=0$. There exists $\tau=\tau(s)<1$  depending on $s$ an the ellipticity constant such that for any cube $Q\subset\Omega$ 
\[\osc_{sQ}h<\tau(s)\osc_{Q} h\quad {\text{and}}\ \tau(s)\to 0\ {\text{as}}\ s\to 0.\]
\end{theorem}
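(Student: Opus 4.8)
The plan is to deduce the oscillation decay from the Harnack inequality (Theorem~\ref{th:Harnack}) by the classical argument, establishing it first on concentric balls and then transferring to cubes by inscribing and circumscribing balls.

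\emph{One--step decay on balls.} Suppose $Lh=0$ on $B_{2\rho}\subset\Omega$ and set $M=\sup_{B_{2\rho}}h$, $m=\inf_{B_{2\rho}}h$. Since $L$ has no zero order term, the constants are solutions, so $M-h\ge0$ and $h-m\ge0$ are non-negative solutions of $Lu=0$ in $B_{2\rho}$. Applying the Harnack inequality of Theorem~\ref{th:Harnack} to each of them on $B_\rho$, with a constant $C$ that by dilation invariance of the equation depends only on the ellipticity constant and the dimension, gives
\[
M-\inf_{B_\rho}h\le C\bigl(M-\sup_{B_\rho}h\bigr),\qquad \sup_{B_\rho}h-m\le C\bigl(\inf_{B_\rho}h-m\bigr).
\]
Adding these and rearranging yields $\osc_{B_\rho}h\le\theta\,\osc_{B_{2\rho}}h$ with $\theta=(C-1)/(C+1)\in(0,1)$, and iterating this one--step estimate gives $\osc_{B_{2^{-k}\rho}}h\le\theta^{\,k}\osc_{B_{\rho}}h$ whenever $B_\rho\subset\Omega$.

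\emph{Transfer to cubes.} Let $Q\subset\Omega$ have side $\ell$ and center $x_0$, so that $B(x_0,\ell/2)\subset Q\subset\Omega$ and $sQ\subset B(x_0,s\sqrt d\,\ell/2)$ for every $s\in(0,1)$. If $s$ is small, let $k=k(s)\ge1$ be the largest integer with $sQ\subset B(x_0,2^{-k}\ell/4)$, so that $k(s)\sim\log_2(1/s)$; since $B(x_0,\ell/4)\subset Q\subset\Omega$, the monotonicity of the oscillation under inclusion and the iterated one--step estimate give
\[
\osc_{sQ}h\le\osc_{B(x_0,2^{-k}\ell/4)}h\le\theta^{\,k(s)}\osc_{B(x_0,\ell/4)}h\le\theta^{\,k(s)}\osc_Q h .
\]
Hence $\tau(s):=\theta^{\,k(s)}<1$ works for small $s$, and $\tau(s)\to0$ as $s\to0$ because $k(s)\to\infty$. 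For the remaining $s$, bounded away from $0$, one uses a Harnack chain inside $Q$: with $M=\sup_Q h$ and $m=\inf_Q h$, the non-negative solutions $u_1=M-h$ and $u_2=h-m$ satisfy $u_1+u_2\equiv\osc_Q h$, so at $x_0$ one of them, say $u_1$, is at least $\tfrac12\osc_Q h$; chaining Harnack along a path from $x_0$ to an arbitrary point of $sQ$ inside $Q$ --- with the number of balls bounded in terms of $s$ alone, exactly as in the proof of Corollary~\ref{cor:3ball} --- produces a lower bound $\inf_{sQ}u_1\ge\delta(s)\,\osc_Q h$ with $\delta(s)>0$, whence $\osc_{sQ}h\le(1-\delta(s))\,\osc_Q h$. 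Combining the two ranges gives a single $\tau(s)<1$ depending only on $s$, the dimension and the ellipticity constant, with $\tau(s)\to0$ as $s\to0$.

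\emph{Main obstacle.} The one--step lemma is entirely standard; the points to watch are keeping every doubled ball used in an application of Harnack inside $\Omega$ (automatic here, as $B(x_0,\ell/2)\subset Q\subset\Omega$) and the observation that the conclusion $\tau(s)\to0$ forces iteration over $\sim\log(1/s)$ dyadic scales rather than a single comparison of $sQ$ with $Q$. The range of $s$ near $1$ genuinely needs the chain argument, since a naive estimate over a cover of $sQ$ does not beat $\osc_Q h$ --- one has to chain Harnack for the positive functions $M-h$ and $h-m$ instead.
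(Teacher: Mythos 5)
Your proof is correct. Note that the paper does not actually prove this theorem: it points to the one-step oscillation inequality of Theorem~\ref{th:osc} in the Appendix (itself quoted from Han--Lin without proof) and leaves the passage from balls to cubes and the claim $\tau(s)\to 0$ implicit. You supply precisely what is omitted: the one-step decay $\osc_{B_\rho}h\le\frac{C-1}{C+1}\osc_{B_{2\rho}}h$ obtained by applying the Harnack inequality to the non-negative solutions $M-h$ and $h-m$, the dyadic iteration over roughly $\log_2(1/s)$ scales that forces $\tau(s)\to0$, and the Harnack-chain argument for $s$ near $1$, where a single ball comparison cannot beat $\osc_Q h$. All three steps are sound, and you correctly keep every doubled ball inside $\Omega$. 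Two minor remarks: your $\tau$ also depends on the dimension (as do all constants in these notes, so this is harmless), and the scale invariance of the Harnack constant that you invoke is indeed recorded at the end of the Appendix; the strict inequality in the statement is immaterial since your bound can be relaxed to any $\tau'(s)\in(\tau(s),1)$.
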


\begin{corollary} Assume that $h$ satisfies $\dv(A\nabla h)=0$ in $2Q$,  $\sup_{Q}|h|=1$ and $\n_h(Q) \leq N_0$. There exist an integer $K$, and positive $b$ and $m$ that depend on $N_0$, on the ellipticity constants of $A$ and on the dimension $d$  
such that if $Q$ is partitioned into $K^d$ smaller equal cubes, $Q=\cup q$ then
\[\sup_q|h|\ge b\quad{\text{for any}}\ q\]
and there exists one cube $q_0$ in the partition such that $\inf_{q_0} |h|>m$.
\end{corollary}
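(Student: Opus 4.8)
The plan is to split the argument into two independent parts: a lower bound $\sup_q |h| \ge b$ valid on \emph{every} cube $q$ of the partition, and the existence of \emph{one} cube $q_0$ on which $|h|$ is bounded below by a positive constant $m$. For the first part I would argue by a chain/compactness argument exactly as in Corollary~\ref{cor:3ball}. The hypothesis $\n_h(Q)\le N_0$ together with the three-ball inequality (Corollary~\ref{cor:3ball1}) forces a uniform lower bound $\sup_{\frac12 Q}|h|\ge c(N_0)$: indeed, if $\sup_{\frac12 Q}|h|$ were too small relative to $\sup_{2Q}|h|$ the doubling index would exceed $N_0$. Now pick any small cube $q$ in the $K^d$-partition. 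Connecting $q$ to the center of $Q$ by a short chain of overlapping balls all contained in $2Q$ and iterating the three-ball inequality $\sup_{B_{j+1}}|h|\le C(\sup_{B_j}|h|)^\beta$ backwards, one propagates the lower bound $c(N_0)$ from a ball near the center down to $q$. The number of balls in the chain is bounded by a constant depending only on $K$, $d$ (and hence on $N_0$, $A$, $d$ once $K$ is fixed), so the resulting constant $b=b(N_0,A,d)>0$ is uniform over all $q$ — this is where $K$ gets chosen.

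For the second part I would use the oscillation theorem stated just above together with the normalization $\sup_Q |h| = 1$. Pick a point $x_* \in Q$ with $|h(x_*)| \ge 1/2$, and WLOG (replacing $h$ by $-h$) assume $h(x_*) \ge 1/2$. Let $q_0$ be the cube of the partition containing $x_*$ (or a slightly larger concentric cube, to avoid boundary issues); its side length is $s(Q)/K$. Applying the oscillation theorem on a cube $Q'$ around $x_*$ of size comparable to $s(Q)/K$, contained in $Q$, we get $\osc_{Q'} h \le \tau(1/K)\,\osc_Q h \le 2\tau(1/K)$, since $\osc_Q h \le 2\sup_Q|h| = 2$. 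Because $\tau(s)\to 0$ as $s\to 0$, we may fix $K$ large enough (consistently with the choice in part one — just take the max of the two $K$'s) so that $2\tau(1/K) \le 1/4$. Then for every $y$ in $Q'$ we have $h(y) \ge h(x_*) - \osc_{Q'} h \ge 1/2 - 1/4 = 1/4$, so $\inf_{Q'} h \ge 1/4 =: m$. Taking $q_0$ to be the partition cube containing $x_*$ (shrinking $Q'$ to fit inside $q_0$, or enlarging the partition slightly), this yields $\inf_{q_0}|h| > m$ with $m = m(N_0,A,d) > 0$.

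The only genuine subtlety — and the step I expect to need the most care — is the bookkeeping of the two choices of $K$. Part one wants $K$ large enough that the chain argument delivers a uniform $b$, but actually a \emph{larger} $K$ only shortens chains, so that direction is harmless; the real constraint is part two, where $K$ must be large enough that $\tau(1/K)$ is small. One must also make sure that the cube $Q'$ on which the oscillation estimate is applied can be taken to sit inside a single partition cube $q_0$ (or, if $x_*$ lies near the boundary between cubes, absorb the mismatch by a harmless dilation of the partition — the statement allows $q_0$ to be \emph{one of} the cubes, and a constant-factor adjustment of $K$ handles the geometry). Once $K = K(N_0,d)$ is fixed to satisfy both requirements, all constants $b$, $m$ depend only on $N_0$, the ellipticity constants of $A$, and $d$, as claimed. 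A final remark: the hypothesis $\n_h(Q)\le N_0$ is used \emph{only} in part one (to start the chain); part two uses just $\sup_Q|h|=1$ and the oscillation theorem.
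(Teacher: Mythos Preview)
Your approach is essentially the paper's: a chain argument for part one, the oscillation theorem for part two. For part one the paper is even more direct --- since by definition $\n_h(Q)=\sup_{q\subset Q}\log(\max_{2q}|h|/\max_q|h|)$, the bound $\max_q|h|\ge e^{-N_0}\max_{2q}|h|$ holds for \emph{every} subcube, and a chain of doubled cubes from $q$ to the maximum point yields $b$ without invoking the three-ball inequality.

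There is one slip in part two. The oscillation theorem compares \emph{concentric} cubes, so the step $\osc_{Q'}h\le\tau(1/K)\,\osc_Q h$ is not valid when $Q'$ is centered at $x_*$ rather than at the center of $Q$. The outer cube must be a concentric dilate of $q_0$; if $x_*$ lies near $\partial Q$ that dilate spills into $2Q\setminus Q$, and bounding the oscillation there forces a bound on $\sup_{2Q}|h|$ --- which is exactly the hypothesis $\n_h(Q)\le N_0$. So your final remark is not correct: part two does use the doubling bound. The paper handles this cleanly by taking $x_0\in\tfrac12 Q$ with $|h(x_0)|=\sup_{Q/2}|h|\ge c(N_0)$ (this is where $\n_h(Q)\le N_0$ enters part two), so that $(K/2)q_0\subset Q$ and one compares directly to $\osc_Q h\le 2$.
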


\begin{proof} Since $\n_h(Q)\le N_0$, we get a lower bound on the supremum of $|h|$ on each small cube $q$. 
Further, assume that  $h(x_0)=\max_{Q/2}|h|\ge c(N_0)$ and $K$ is chosen large enough (we replace $h$ by $-h$ if necessary). We take $q_0$ such that $x_0\in q_0$. Clearly $\osc_Qh\le 2$ and  since $K/2q_0\subset Q$ by the oscillation theorem we have $\osc_{q_0} h\le 2\tau(2/K)$. Then we conclude 
\[\inf_{q_0}h= \sup_{q_0}h-\underset{q_0}\osc\, h\ge c(N_0)-2\tau(2K^{-1})\ge  m,\]
when $m<c(N_0)/2$ and $K$ is large enough.
\end{proof}

In particular, the corollary implies that $|\{x\in Q: |h|<m\}|\le (1-K^{-d})|Q|.$
Dividing each $q$ once again  into smaller cubes, we get on each new cube the supremum of $|h|$ is at least $b^2$ and  
\[|\{x\in Q:|h|<mb\}|\le (1-K^{-d})^2|Q|.\]
Iterating the corollary we see that
\[|\{x\in Q: |h|<mb^l\}|\le (1-K^{-d})^{l+1}|Q|,\]
when $\sup_{Q}|h|=1$. Thus the estimate \eqref{eq:baseN} holds for $e^{-a}=b^lm$ and $\gamma$ such that $b^\gamma=1-K^{-d}$, it completes the proof of the Lemma \ref{l:base}.


\subsection{Exercises}
\begin{exercise} Let $f\in L^2(\T^2)$, $\|f\|_{L^2}=1$. We define the $L^2$-doubling index of $f$ on a square $q$ by
\[n(f,q)=\log\frac{\int_{2q}|f|^2}{\int_q|f|^2}.\]
Assume that $\T^2$ is partitioned into $K^2$ equal squares we say that a square is good if $n(f,q)<100$.
Show that 
\[\sum_{q\ {\text{good}}}\int_{q}|f|^2\ge 1/2.\]
{\it{Remark:}} $1/2$ is a very rough estimate, you can find a better one.
\end{exercise}

\begin{exercise}[Discrete version of Remez inequality] Use the Remez inequality to show that if $P$ is a polynomial of degree $n$ and  $S\subset I\cap\Z$ contains $n+m$ points then
\[\max_I|P|\le \left(\frac{4|I|}{m}\right)^n\max_S|P|.\]
\end{exercise}

\begin{exercise}[Remez inequality for polynomials]\label{ex:Rzin}
Let $T_n(x)$ be the Chebyshev polynomial or degree $n$, such that $T_n(\cos \theta)=\cos (n\theta)$. This sequence can be defined by
\[T_0(x)=1,\ T_1(x)=x,\ T_{n+1}(x)=2xT_n(x)-T_{n-1}(x).\]
Clearly for each $n$ there is a sequence $-1= x_{n,0}<x_{n,1}<...<x_{n,n}=1$ such that $T_n(x_k)=(-1)^{n-k}$. 

 Suppose that $c>0$ and $E\subset I=[-1,1+c]$ is a measurable set with $|E|=2$. In this exercise we prove that for any polynomial $P$ of degree $n$
\[\max_{I}|P|\le T_n(1+c)\max_E|P|.\]
the equality is obtained for example when $E=[-1,1]$ and $P=T_n$.
To prove the inequality it is enough to assume that $E$ is open and show that \[P(1+c)\le T_n(1+c)\max_E|P|.\] 
a)  Show that there are points $y_k, k=0,...,n$ in $E$ such that $|x_{n,k}-x_{n,j}|\le |y_k-y_j|$ and $1+c-x_{n,k}\ge 1+c-y_k$.\\
b) Use the Lagrange interpolation formula and the properties of the Chebyshev polynomials to show that $P(1+c)\le T_n(1+c)\max_E|P|.$ \\
c) Let $x>1$, show that $T_n(2x-1)\le (4x)^n$. \\
{\it{Remark:}} This gives a proof of the Remez inequality formulated in the lecture notes.
\end{exercise}

\begin{exercise} [Quantitative unique continuation for harmonic functions]
We will use Remez inequality to show the quantitative unique continuation form sets of positive measure for harmonic functions.\\
a) Suppose that $h$ is a bounded harmonic function in the unit ball $B_0$. Let $r<r_0(d)$ be small enough. Show that there exists $q(r)<1$ and $C$ such that for any integer $n$ there is a polynomial $p_n$, $\deg p_n\le n$  such that
\[\max_{|x|\le r}|h(x)-p_n(x)|\le Cq(r)^n\max_{|x|\le 1}|h(x)|.\]
Moreover $q(r)\to 0$ as $r\to 0$.\\
b) Prove that there is $r_1=r_1(d)$ such that if $E$ is a measurable subset of $r_1B_0$ of positive measure, $m=|E|$, and $h$ is a harmonic function in $B_0$ then
\[\max_{r_1B_0}|h|\le C(\max_E|h|)^\alpha(\max_{B_0}|h|)^{1-\alpha},\]
where $\alpha$ depends on $m$  and $r_1$.
\end{exercise}

\begin{exercise}[logarithmic capacity]\ \\
Define the logarithmic capacity of a compact subset of the complex plane by
\[{\rm{cap}}(K)=\lim_{n\to\infty}\left(\min_{p\in\p_n}\max_K|p(x)|\right)^{1/n}.\]
a) Show that the limit exists.\\
b) Prove that  ${\rm{cap}}(E_{na}(p))=e^{-a}$ for  any $p\in\p_n$.\\
c) Use Polya's lemma to show that $|K|\le \pi{\rm{cap}}(K)^2$ for any compact set $K\subset\C$.\\
\end{exercise}


\section{Proof of propagation of smallness result}\label{s:proof}
We now prove Lemma \ref{l:R} using double induction on $a$ and $N$ and some iterative argument. First we prove some preliminary result on the distribution of the doubling index that will help us to carry on the induction step.

\subsection{On distribution of the doubling indices}
The results on  the doubling index that we formulate below are crucial for the proof. Let $Q_0$ be the unit cube in $\R^d$.

First we assume that $f\in C(Q_0)$ and for any $q$ such that $2q\subset Q_0$ we define
\[N_f(q)=\log\frac{\max_{2q}|f|}{\max_q|f|}.\]
Warning: We have used the notation $N_h(r)$ for the frequency of $h$ in the ball $B(0,r)$ in Section \ref{s:di}. But for the rest of the notes we do not refer to the frequency function and use $N_f(q)$ for the doubling constant of $f$ in a cube $q$ as defined above.  
\begin{lemma} \label{l:LM1}
 Let a cube $Q\subset Q_0$ be partitioned into $K^d$ equal cubes $q_i$, $K\ge 8$. Put $N_{\min}= \min\limits_{i} N_f(q_i)$ and assume that $N_{\min}$ is large enough, $N_{min} \geq N_0(d)$. Then 
\[N_f(Q/2) \geq  \frac{K}{8} N_{\min}.\]
\end{lemma}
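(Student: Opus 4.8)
The plan is to prove Lemma~\ref{l:LM1} by a chaining (``walk'') argument through the grid of subcubes, exploiting the uniform lower bound $\max_{2q_i}|f|\ge e^{N_{\min}}\max_{q_i}|f|$ available on every subcube of the partition. The idea is to start from a subcube on which $|f|$ already attains (essentially) $\max_{Q/2}|f|$, and then repeatedly jump to a neighbouring subcube on which $|f|$ is at least $e^{N_{\min}}$ times larger, staying inside $Q$ for about $K/8$ steps; the accumulated gain $e^{(K/8)N_{\min}}$ is exactly what is needed.

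In detail, I would fix $x_0\in\overline{Q/2}$ with $|f(x_0)|=\max_{Q/2}|f|$ and let $q_{i_0}$ be a subcube of the partition containing $x_0$, so $M_0:=\max_{q_{i_0}}|f|\ge\max_{Q/2}|f|$. Given $q_{i_j}$ with $2q_{i_j}\subset Q_0$, choose $x_{j+1}\in\overline{2q_{i_j}}$ with $|f(x_{j+1})|=\max_{2q_{i_j}}|f|$ and let $q_{i_{j+1}}$ be a subcube containing $x_{j+1}$; since $q_{i_j}$ is one of the partition cubes,
\[
M_{j+1}:=\max_{q_{i_{j+1}}}|f|\ge|f(x_{j+1})|=\max_{2q_{i_j}}|f|\ge e^{N_f(q_{i_j})}M_j\ge e^{N_{\min}}M_j .
\]
Because $2q_{i_j}$ is concentric with $q_{i_j}$ and has twice its side length, $x_{j+1}$ lies in a subcube at $\ell^\infty$ grid-distance at most one from $q_{i_j}$; thus $q_{i_0},q_{i_1},\dots$ is a lattice walk with nearest-neighbour (including diagonal) steps, and $M_j\ge e^{jN_{\min}}M_0$ as long as the construction proceeds. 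The positivity of $N_{\min}$ (a fortiori $N_{\min}\ge N_0(d)$) is what keeps the $M_j$ finite and strictly increasing, so the walk never degenerates.

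It remains to see that the walk is forced to stay inside $Q$ for enough steps. Index the subcubes by $k=(k_1,\dots,k_d)\in\{1,\dots,K\}^d$; a direct check gives $2q_k\subset Q$ whenever every $k_l\in\{2,\dots,K-1\}$, so call such a $k$ \emph{safe}. Since $q_{i_0}$ meets $Q/2$, each coordinate of $q_{i_0}$ lies in $[K/4,\,3K/4+1]$, and a walk changing each coordinate by at most one per step therefore needs at least $\lceil K/4\rceil-1$ steps to leave the safe region. Consequently, with $m:=\lceil K/8\rceil$ (which satisfies $m\le\lceil K/4\rceil-1$ for every $K\ge 8$, as one checks), the cubes $q_{i_0},\dots,q_{i_{m-1}}$ are all safe; hence each $2q_{i_j}\subset Q\subset Q_0$, the walk is well defined up to $q_{i_m}$, and $q_{i_m}\subset Q$. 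Therefore
\[
\max_Q|f|\ \ge\ M_m\ \ge\ e^{mN_{\min}}M_0\ \ge\ e^{(K/8)N_{\min}}\max_{Q/2}|f| ,
\]
which is the asserted inequality $N_f(Q/2)\ge\frac{K}{8}N_{\min}$.

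The substance of the proof is the geometric bookkeeping of the last paragraph: the inclusion $2q_k\subset Q$ for safe $k$, the lower bound $\lceil K/4\rceil-1$ on the exit time, and the elementary inequality $\lceil K/8\rceil\le\lceil K/4\rceil-1$ for $K\ge 8$. Everything else — producing the chain and accumulating the factor $e^{N_{\min}}$ at each step — is immediate from the definition of the doubling index. I expect the only mild care needed is to ensure that at every step used we stay within the domain $Q_0$ on which $f$ is defined, which is precisely why only the safe subcubes enter the argument.
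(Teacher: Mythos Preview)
Your proof is correct and follows essentially the same chain (``walk'') argument as the paper: start at a point in $Q/2$ where $|f|$ is maximal, repeatedly pass from a subcube to a neighbouring one via the doubling inequality, and count that roughly $K/8$ steps can be taken before exiting $Q$. The paper tracks the dilation via $x_j\in(1/2+2j/K)Q$ and iterates $\lfloor K/4\rfloor$ times, whereas you phrase the same geometry through the ``safe'' index set and the nearest-neighbour step bound; your bookkeeping is in fact a bit more careful, but the substance is identical.
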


\begin{proof}
Let $\max_{Q/2}|f|=|f(x_0)|$, $x_0\in q_i$ for some $i$. Since $N_f(q_i)\le N_{min}$ there exists $x_1\in 2q_i$ such that $|f(x_1)|\ge e^{N_{min}}|f(x_0)|$. Clearly $x_1\in (1/2+2/K)Q$. We can find one of the cubes in the partition for which $x_1\in q$ and repeat the step. Then there is a sequence of points $x_j$ such that \[|f(x_j)|\ge e^{jN_{min}}|f(x_0)|\] and $x_j\in (1/2+2j/K)Q$. We repeat this $\left[K/4\right]$ times, the last $x$ is in $Q$. Then 
\[\max_{Q}|f|\ge e^{KN_{min}/8}\max_{Q/2}|f|.\]
which implies the required estimate.
\end{proof}

For solutions of elliptic equations we can formulate the above result using the monotonicity of the doubling index and the modified quantity  $\n_h(q)$.

\begin{corollary} \label{cor:cubes} Let $L=\dv(A\nabla \cdot)$ be a uniformly elliptic operator in $2Q_0$. There exist constants $N_0$ and $J_0$ such that if
$Lh=0$ in $2Q_0$, $Q\subset Q_0$, $Q$ is partitioned into $J^d$ cubes $q_i$ with $J\ge J_0$ then for at least one $q$
\[\n_h(q)\le\n_h(Q)/2.\]
\end{corollary}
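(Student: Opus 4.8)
The plan is to argue by contradiction and reduce to Lemma~\ref{l:LM1}, after converting the modified doubling index $\n_h$ into the honest doubling index $N_h(q)=\log\frac{\max_{2q}|h|}{\max_q|h|}$ at the scale of each small cube. Write $N:=\n_h(Q)$. I will choose $N_0$ and $J_0$ along the way, depending only on $d$ and the ellipticity and Lipschitz constants of $L$, and obtain the conclusion under the (needed) hypothesis $N\ge N_0$. So suppose the statement fails: $\n_h(q_i)>N/2$ for \emph{every} cube $q_i$ of the partition into $J^d$ cubes.

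The first step is to pass from $\n_h$ to $N_h$ at the scale of the $q_i$. Since $s(q_i)=s(Q)/J\le 1$ and, for $J\ge J_0$ large enough, all relevant dilates of $q_i$ stay inside $2Q_0$ where $h$ solves the equation, inequality \eqref{eq:invdouble} applied on each $q_i$ gives
\[
N_h(q_i)=\log\frac{\max_{2q_i}|h|}{\max_{q_i}|h|}\ \ge\ a_1\,\n_h(q_i)-a_2\ >\ \frac{a_1}{2}\,N-a_2 .
\]
Hence $N_{\min}:=\min_i N_h(q_i)>\tfrac{a_1}{2}N-a_2$, and by taking $N_0$ large this exceeds the dimensional threshold $N_0(d)$ of Lemma~\ref{l:LM1}. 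Applying that lemma to $f=h$ with $K=J$ (so we also require $J_0\ge 8$) yields
\[
N_h(Q/2)\ \ge\ \frac{J}{8}\,N_{\min}\ >\ \frac{J}{8}\Bigl(\frac{a_1}{2}N-a_2\Bigr).
\]

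Then I would close the loop by bounding the left-hand side from above: since $Q/2\subset Q$, the trivial monotonicity of $\n_h$ noted after \eqref{eq:double-def} gives $N_h(Q/2)\le \n_h(Q/2)\le \n_h(Q)=N$. Combining the two displays forces $N\ge \tfrac{J}{8}\bigl(\tfrac{a_1}{2}N-a_2\bigr)$; choosing $N_0\ge 4a_2/a_1$ so that $\tfrac{a_1}{2}N-a_2\ge\tfrac{a_1}{4}N$ for $N\ge N_0$, this becomes $1\ge \tfrac{a_1}{32}J$, which is false once $J\ge J_0>32/a_1$. This contradiction shows that some cube $q$ of the partition has $\n_h(q)\le N/2=\n_h(Q)/2$.

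There is essentially no conceptual obstacle here; the one thing requiring care is the \emph{joint} choice of constants — a single $N_0$ and $J_0$, expressed in terms of $d$, the ellipticity and Lipschitz constants of $L$, the constants $a_1,a_2$ of \eqref{eq:invdouble}, and the dimensional threshold $N_0(d)$ of Lemma~\ref{l:LM1}, that simultaneously makes $N_{\min}\ge N_0(d)$ (so Lemma~\ref{l:LM1} applies) and turns the last inequality into a genuine contradiction. One should also note, as in the argument above, that the real hypothesis used is $\n_h(Q)\ge N_0$; without it the statement cannot hold (for affine $h$ all $\n_h(q)$ coincide), so I would read that condition as implicit in the corollary.
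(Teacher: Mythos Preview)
Your proof is correct and follows exactly the route the paper intends: it uses the two-sided comparison $N_h(q)\le \n_h(q)\le A_1 N_h(q)+A_2$ (the right inequality being the rewriting of \eqref{eq:invdouble}) and then invokes Lemma~\ref{l:LM1}, which is precisely what the paper's one-line proof says. Your observation that the hypothesis $\n_h(Q)\ge N_0$ must be read as implicit is also on point; the constant $N_0$ appears in the statement for this reason.
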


We rewrite  the inequality \eqref{eq:invdouble}in the following way
\[N_h(q)\le\n_h(q)\le A_1N_h(q)+A_2.\]
Then Corollary follows immediately from Lemma \ref{l:LM1}.

Our aim in induction argument is to divide the cube  into  small cubes and find a subcube with small doubling index.


\subsection{Choosing the right notation}
We fix the ellipticity constant $\Lambda>1$ and the Lipschitz constant  $C$  and consider second order elliptic operator $L=\dv(A\nabla \cdot)$ in the cube $2Q_0$, where $Q_0$ is the unit cube  in $\R^d$. We vary the parameters $N > 1$ and $a>0$ and aim at proving the estimate (\ref{eq:RPDE}).
 
Let \[m(u,a)=|\{x\in Q_0: |u(x)|<e^{-a}\sup_{Q_0}|u|\}|\] and
 \[M(N,a)=\sup_{*}m(u,a),\] where the supremum is taken over all elliptic operators $\dv(A \nabla \cdot )$ and functions $u$ satisfying the following conditions in $2Q_0$:
\begin{itemize}
 \item[(i)]  $A(x)=[a_{ij}(x)]_{1\le i,j\le d}$ is a symmetric uniformly elliptic matrix with Lipschitz entries and ellipticity and Lipschitz constants bounded by $\Lambda$ and $C$ respectively,
 \item[(ii)] $u$ is a solution to $\dv(A\nabla u)=0$ in $2Q_0$,
\item[(iii)] $\n_u(Q_0)\le N$. 
\end{itemize}
Our aim is to show that
\begin{equation}\label{eq:pro}
M(N,a)\le Ce^{-\beta a / N}.
\end{equation}
 The constant $\beta>0$ will be chosen later and will not depend on $N$.

 As we remarked in Section \ref{ss:base} we can assume that $a/N>c_0$. By Lemma \ref{l:base} we can also assume that $N$ is sufficiently large.
The proof  now contains two main steps.
 First, with the help of Corollary \ref{cor:cubes} we  prove a recursive inequality for $M(N,a)$. Then we show that the recursive inequality implies the exponential bound  \eqref{eq:RPDE} by a double induction argument on $a,N$. 


\subsubsection{Recursive inequality.} We show that for some $a_0>0$ and $s<1$
\begin{equation} \label{eq:rec}
 M(N,a) \leq   M( N/2, a - Na_0 )+ s M(N, a -Na_0).
\end{equation}

 Fix  a solution $u$ to the elliptic equation $\dv(A \nabla u)=0$  with $\n_u(Q_0)\le N$. Divide $Q_0$ into $J^d$ subcubes $q$ and apply Corollary \ref{cor:cubes}. It claims that at least one cube $q_0$ satisfies $\n_u(q_0)  \leq N/2$. We have
\begin{equation*} 
m(u,a)=\sum\limits_{q}|\{x\in q: |u(x)|<e^{-a}\sup_{Q_0}|u|\}|.
\end{equation*}
 By  the definition of the doubling constant we see that
\begin{equation*}
\sup_{q}|u|\ge c_1 J^{-C_1 N}\sup_{Q_0}|u|.
\end{equation*} 
 Since $N$ is sufficiently large, we can forget about $c_1$ above by increasing $C_1$ and we have
\begin{equation*}
\sup_{q}|u|\ge e^{-a_0N}\sup_{Q_0}|u|.
\end{equation*} 
 We continue to estimate $m(u,a)$:
\begin{multline*} 
m(u,a)\le  \sum_{q}|\{x\in q: |u(x)|<e^{-a+a_0N}\sup_{q}|u|\}|\\
= |\{x\in q_0: |u(x)|<e^{- a+a_0N} \sup_{q_0}|u|\}|+ \sum_{q\neq q_0} |\{x\in q: |u(x)|<e^{- a+a_0N} \sup_{q}|u|\}|.
\end{multline*}
 Now, we estimate the first term,
\begin{equation*}
|\{x\in q_0: |u(x)|<e^{- a+a_0N} \sup_{q_0}|u|\}| \leq J^{-d} M(N/2, a-a_0N).
\end{equation*}
 We use the  fact that the restriction of $u$ to the cube $2q$ corresponds to a solution of another elliptic PDE, the new equation can be written in the divergence form with some coefficient matrix  which has the same bounds for ellipticity and Lipschitz constants.

For the second term we get
\begin{equation*}
\sum_{q\neq q_0} \leq (J^d-1)J^{-d} M(N, a-a_0N) = s M(N,  a-a_0N),
 \end{equation*}
where $s=(J^d-1)J^{-d}<1$.
 Adding the inequalities for the first and second terms and taking the supremum over $u$, we obtain the recursive inequality \eqref{eq:rec} for $M(N,a)$.


\subsection{Recursive inequality implies  exponential bound.}
We will now prove that 
\begin{equation}\label{eq:pro2}
M(N,a)\le Ce^{-\beta a / N}
\end{equation}
for some $C$ large enough and $\beta>0$ small enough by a double induction on $N$ and $a$.
 Without loss of generality we may assume $N=2^l$, where $l$ is an integer number. 
 Suppose that we know \eqref{eq:pro2} for $N=2^{l-1}$ and all $a>0$ and now we wish to establish it 
 for $N=2^{l}$.  By Lemma  \ref{l:base} we may assume 
 $l$ is sufficiently large. 
 For a fixed $l$ we argue by induction on $a$ with step  $a_02^l$. 
 We may assume that  $a/N > k_0a_0$, where $k_0>0$ will be chosen later.
  For $a\leq k_0a_0N$ the inequality is true if we choose the constant $C$ large enough. 
The induction base implies the inequality for $k=k_0$. We describe the step of the induction from $a=(k-1)a_02^l$ to $a=ka_02^l$.

 By the induction assumption we have 
\[M(2^l,(k-1)a_02^l)\le Ce^{-\beta (k-1)a_0}\]
and 
\[ M(2^{l-1},(k-1)a_02^l)\le Ce^{- 2\beta (k-1)a_0}.\]
We apply the recursive inequality \eqref{eq:rec} 
\[M(2^l,ka_02^l) \leq Ce^{-2\beta (k-1)a_0}+Cse^{-\beta(k-1)a_0}. \]
 Our goal is to obtain the following inequality 
\[ e^{-2\beta(k-1)a_0}+se^{-\beta (k-1)a_0} \leq e^{-\beta ka_0}\]
for $k>k_0$ and some $\beta>0$.
Dividing by $e^{-ka_0\beta}$ we reduce it to
\[e^{-\beta a_0(k-2)}+ se^{\beta a_0}\le 1.\]
The last inequality holds with the proper choice of the parameters: $s<1$ and  $a_0$ are fixed,  we choose $\beta$ to be small enough so that the second term is less than $(1+s)/2$ and then choose large $k_0$ to make the first term smaller than $(1-s)/2$ when $k\ge k_0$. This concludes the induction step and the proof of our main result.

More delicate propagation of smallness from sets of codimension smaller then one is discussed in \cite{LM}.


\subsection{Excercesis}
\begin{exercise} Suppose that $Lu=0$ in the unit cube $Q_0$. \\
a) Use the oscillation theorem to show that there exists a constant $K$ which depends on the Lipschitz and ellipticity constants for $L$ such that if $q$ is a small cube with $Kq\subset Q$ and $Z(u)\cap q\neq\emptyset$ then
\[\log\frac{\max_{Kq}|u|}{\max_{q}|u|}\ge 2.\]
b) Show that there exists $c$ and $B_0$ such that if $Q_0$ is partitioned into $B^d$ cubes $q$, $B>B_0$ and $Z(u)\cap q\neq \emptyset$ for each $q$ then
\[N_u(Q/2)=\log\frac{\max_{Q}|u|}{\max_{Q/2}|u|}\ge cB,\]
where $c$ depends on $K$ from a). 
\end{exercise}

\begin{exercise}
Assume that $m:\Z_+\times\Z_+\to \R_+$ satisfies
\[m(k,j)\le C\ {\text{for}}\ j< 4,\quad m(1,j)\le e^{-j},\] 
\[{\text{and}}\ m(k,j)\le m(k-1,2(j-1))+\frac{1}{4}m(k,j-1).\]
Prove that $m(k,j)\le Ce^{-j}$. \\
{\it{Remark:}} A similar argument is used to derive the estimate in the lecture notes from the iterative inequality.
\end{exercise}

\begin{exercise}[Remez inequality for eigenfunctions]\ \\
a) Let $M$ be a compact manifold. Use the lift and the Remez inequality for solutions of elliptic equations to show that there exists a constant $C=C(M)$ such that for any eigenfunction $\phi_\lambda$ and any compact set $E\subset M$ the following inequality holds.
\[\max_E|\phi_\lambda|\ge C^{-1}\max_M|\phi_\lambda|\left(\frac{|E|}{C|M|}\right)^{C\sqrt{\lambda}}.\]
b) Let $M=S^2$ and $B$ be a small ball on $S^2$, construct a sequence of eigenfunctions $\phi_\lambda$ on the sphere with $\lambda\to \infty$ such that $\sup_B|\phi_\lambda|/\sup_M|\phi_\lambda|$ decays as $e^{-c\sqrt{\lambda}}$. 
\end{exercise}

\begin{exercise}
 Apply the Remez inequality for solutions of elliptic equations   to show that if $h$ is a solution of  $Lh=0$ in $kQ_0$ then $g=\log|h|$ is in BMO and $\|g\|_{BMO(Q_0)}\le C_L\n_h(Q_0)$.\\
Reminder: A function $g$ is said to have bounded mean oscillation if there exists a constant $C$ such that 
\[\frac{1}{|Q|}\int_Q|g-c_Q|\le C\]
for any cube $Q$ and some constants $c_Q$. The smallest $C$ for which the inequality holds is called the $BMO$-norm of $g$.\\
In particular if a function $g$ satisfies
\[|\{x\in Q: |g(x)-c_Q|>\gamma\}|\le C\exp(-A\gamma)|Q|,\]
for some $c_Q$ then $g\in BMO$ and $\|g\|_{BMO}\le c/A$.
\end{exercise}

\section{Appendix:  Second order elliptic equations in divergence form}\label{s:PDE}


\subsection{Basic results for elliptic operator in divergence
 form}
We  study solutions of second order elliptic equations in divergence form
\[Lu:=\dv(A\nabla u)+cu=0,\]
where $u\in W^{1,2}(\Omega)$, i.e., $|\nabla u|\in L^2(\Omega)$, $\Omega\subset\R^d$. The  matrix $A=A(x)$ is symmetric and uniformly elliptic, i.e., 
\[\Lambda^{-1}|v|^2\le(A(x)v,v)\le \Lambda|v|^2\]
for any $x\in\Omega$ and any $v\in \R^d$. 

First we assume that the elements of $A(x)$ are measurable bounded functions (the boundedness follows from the uniform ellipticity condition). We will assume that  $c$ is measurable and bounded, weaker integrability assumptions on $c$ are sufficient for some of the results below.
The equation $Lu=0$ is understood in the integral sense, similarly, we consider the inequalities $Lu\ge 0$ and $Lu\le 0$.
The first classical result is the maximal principle, see for example \cite[Theorem 8.1]{GT}. We use here the standard notation, $u^+=\max(u,0)$. 
\begin{theorem}[Maximal principle] Suppose that $c\le 0$ and $u\in W^{1,2}(\Omega)$ satisfies $Lu\ge 0$. Then
\[\sup_{\Omega} u\le \sup_{\partial\Omega} u^+.\]
\end{theorem}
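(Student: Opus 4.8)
The plan is to carry out the classical Stampacchia truncation argument. Write $\ell=\sup_{\partial\Omega}u^+$; we may assume $\ell<\infty$ (otherwise there is nothing to prove), and we note $\ell\ge 0$. It suffices to show that $u\le k$ a.e.\ in $\Omega$ for every $k>\ell$, since letting $k\downarrow\ell$ along a sequence then gives $\sup_\Omega u\le\ell$.

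So fix such a $k$, and note that $k>0$. First I would record that, by the very definition of $\sup_{\partial\Omega}u^+$ for a $W^{1,2}$ function, the truncation $v:=(u-k)^+$ lies in $W_0^{1,2}(\Omega)$; it is nonnegative, and by the chain rule for Sobolev functions $\nabla v=\chi_{\{u>k\}}\nabla u$. Next I would test the inequality $Lu\ge 0$ — which, understood in the integral sense, reads
\[\int_\Omega(A\nabla u,\nabla\varphi)\,dx\le\int_\Omega cu\varphi\,dx\qquad\text{for all nonnegative }\varphi\in W_0^{1,2}(\Omega)\]
— against $\varphi=v$. Since $\nabla v$ is supported on $\{u>k\}$, where it equals $\nabla u$, the left-hand side becomes $\int_\Omega(A\nabla v,\nabla v)\,dx$, which is at least $\Lambda^{-1}\int_\Omega|\nabla v|^2\,dx$ by uniform ellipticity. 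On the right-hand side, the integrand is supported where $u>k>0$ and $v=u-k>0$, so $uv\ge 0$ there, and hence $\int_\Omega cuv\,dx\le 0$ because $c\le 0$. Combining,
\[\Lambda^{-1}\int_\Omega|\nabla v|^2\,dx\le\int_\Omega(A\nabla v,\nabla v)\,dx\le\int_\Omega cuv\,dx\le 0,\]
so $\nabla v=0$ a.e.\ in $\Omega$. Since $v\in W_0^{1,2}(\Omega)$ and $\Omega$ is bounded, the Poincar\'e inequality then forces $v\equiv 0$, i.e.\ $u\le k$ a.e.

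The two Sobolev-space facts used above — the chain rule $\nabla(u-k)^+=\chi_{\{u>k\}}\nabla u$ and the admissibility of $(u-k)^+$ as a test function in $W_0^{1,2}(\Omega)$ — I would simply quote from the standard theory rather than reprove. The one point that genuinely requires care is the interpretation of the boundary hypothesis for a merely $W^{1,2}$ solution: one defines "$u\le k$ on $\partial\Omega$'' to mean $(u-k)^+\in W_0^{1,2}(\Omega)$, and this is precisely what makes $v$ a legitimate test function. That is the place where the boundary data enters the energy estimate, and it is the main (if modest) obstacle; everything else is the routine coercivity-plus-sign computation above.
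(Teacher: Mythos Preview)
Your argument is correct: this is precisely the classical Stampacchia truncation proof. The paper itself does not supply a proof of this statement but merely cites \cite[Theorem 8.1]{GT}, and what you have written is essentially the argument given there.
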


We also use the following classical inequality for gradients of solutions of general elliptic PDEs in divergence form. 
\begin{theorem}[Caccioppoli inequality] Suppose that $Lu=0$ in $\Omega$, $B_R\subset\Omega, r<R$. Then
\[\int_{B_r}|\nabla u|^2\le C\left(\frac{1}{(R-r)^2}+\|c\|_{L^\infty}\right)\int_{B_R}|u|^2,\]
where $C=C(d,\Lambda)$.
\end{theorem}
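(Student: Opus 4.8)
The plan is to use the classical energy method with a cutoff test function. Since $u\in W^{1,2}(\Omega)$ solves $Lu=0$ in the weak sense, we have the integral identity
\[\int_\Omega (A\nabla u,\nabla\varphi)=\int_\Omega cu\varphi\]
for every $\varphi\in W_0^{1,2}(\Omega)$. First I would fix $R'$ with $r<R'<R$, so that $\overline{B_{R'}}\subset B_R\subset\Omega$, and choose a Lipschitz cutoff $\eta$ with $0\le\eta\le 1$, $\eta\equiv 1$ on $B_r$, $\operatorname{supp}\eta\subset\overline{B_{R'}}$, and $|\nabla\eta|\le 2/(R'-r)$; then $\varphi=\eta^2 u$ lies in $W_0^{1,2}(\Omega)$ and $\nabla\varphi=\eta^2\nabla u+2\eta u\nabla\eta$. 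Substituting and rearranging gives
\[\int\eta^2(A\nabla u,\nabla u)=-2\int\eta u\,(A\nabla u,\nabla\eta)+\int c\eta^2 u^2.\]

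Next I would estimate the three terms. On the left, uniform ellipticity gives $(A\nabla u,\nabla u)\ge\Lambda^{-1}|\nabla u|^2$. For the cross term, symmetry and positive definiteness of $A$ yield the Cauchy--Schwarz bound $|(A\nabla u,\nabla\eta)|\le\Lambda|\nabla u|\,|\nabla\eta|$, and then Young's inequality applied to $2(\eta|\nabla u|)(\Lambda|u|\,|\nabla\eta|)$ with weight $\tfrac{1}{2\Lambda}$ produces
\[2\Big|\int\eta u\,(A\nabla u,\nabla\eta)\Big|\le\frac{1}{2\Lambda}\int\eta^2|\nabla u|^2+2\Lambda^3\int u^2|\nabla\eta|^2.\]
The last term is controlled by $\|c\|_{L^\infty}\int\eta^2u^2\le\|c\|_{L^\infty}\int_{B_R}u^2$.

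The key step — and essentially the only one with any subtlety — is the \emph{absorption}: the coefficient $\tfrac{1}{2\Lambda}$ in front of $\int\eta^2|\nabla u|^2$ coming from Young's inequality is strictly smaller than the coefficient $\Lambda^{-1}$ on the left-hand side, so that term can be moved to the left, leaving
\[\frac{1}{2\Lambda}\int\eta^2|\nabla u|^2\le 2\Lambda^3\int_{B_R}u^2|\nabla\eta|^2+\|c\|_{L^\infty}\int_{B_R}u^2.\]
Since $\eta\equiv 1$ on $B_r$ and $|\nabla\eta|\le 2/(R'-r)$, this yields
\[\int_{B_r}|\nabla u|^2\le C(d,\Lambda)\left(\frac{1}{(R'-r)^2}+\|c\|_{L^\infty}\right)\int_{B_R}|u|^2,\]
and letting $R'\to R$ gives the claim. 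The only technical point is the legitimacy of the test function $\eta^2u$ and the choice of $R'<R$ to keep $\operatorname{supp}\eta$ compactly inside $\Omega$; the choice of the Young weight is forced by the requirement that absorption be possible. No sign condition on $c$ is needed, and when $c\equiv 0$ the computation is identical and slightly shorter.
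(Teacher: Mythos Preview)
Your proof is correct and is exactly the standard energy/cutoff argument. Note, however, that the paper does not actually prove the Caccioppoli inequality: it is listed in the Appendix among the ``basic results'' for elliptic operators in divergence form and left without proof, with the reader referred to standard textbooks such as \cite{GT} and \cite{HL}. The argument you wrote is precisely the one found in those references, so there is nothing to compare --- your write-up simply fills in what the paper takes for granted.
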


Classical iteration methods of De Giorgi and Moser imply the following estimates, see \cite[Chapter 4]{HL}
\begin{theorem}[Local boundedness]
Suppose that $Lu\ge 0$ in $\Omega$, $2B\subset\Omega$, then $u^+\in L^\infty_{loc}(\Omega)$ and 
\[\sup_{B}u^+\le C\left(|2B|^{-1}\int_{2B}|u^+|^2\right)^{1/2},\]
where $C$ depends on $d,\Lambda$ and $\|c\|_\infty$.
\end{theorem}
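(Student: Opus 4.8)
\medskip
\noindent\emph{Proof plan.} The plan is to run Moser's iteration on powers of $u^+$. After rescaling we may assume $2B=B_2$ and $B=B_1$ are concentric balls centred at the origin (this only changes $\|c\|_\infty$ by the square of the scaling factor, which the final constant absorbs). If $u^+\equiv 0$ there is nothing to prove; otherwise set
\[k^2=|B_2|^{-1}\int_{B_2}|u^+|^2>0,\qquad \bar u=u^++k,\]
so that $\int_{B_2}\bar u^2\le 4k^2|B_2|$, and it suffices to prove $\sup_{B_1}\bar u\le Ck$. The hypothesis $Lu\ge 0$ means $\int A\nabla u\cdot\nabla\varphi\le \int cu\,\varphi$ for every $0\le\varphi\in W^{1,2}_0$. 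For $\beta\ge 1$ and a cutoff $\eta$ I would test against $\varphi=\eta^2\bigl(\bar u_M^{2\beta-1}-k^{2\beta-1}\bigr)$ with $\bar u_M=\min(\bar u,M)$; the truncation keeps $\varphi$ bounded, and the crucial feature is that $\varphi$ vanishes on $\{u\le 0\}$, which is exactly what allows the zeroth-order term $cu$ to be controlled for $c$ of arbitrary sign (on $\{u>0\}$ one has $0<u\le\bar u$, so $cu\,\varphi\le\|c\|_\infty\eta^2\bar u^{2\beta}$). One then lets $M\to\infty$ by monotone convergence.

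Using $\nabla u^+=\mathbf 1_{\{u>0\}}\nabla u$ a.e., the ellipticity of $A$, and Cauchy--Schwarz together with Young's inequality to absorb the term involving $\nabla\bar u$, this yields the Caccioppoli-type estimate
\[\int\eta^2\bigl|\nabla(\bar u^{\beta})\bigr|^2\le C\beta\bigl(1+\|c\|_\infty\bigr)\Bigl(\int|\nabla\eta|^2\bar u^{2\beta}+\int\eta^2\bar u^{2\beta}\Bigr).\]
Writing $w=\bar u^\beta$, applying the Sobolev inequality to $\eta w\in W^{1,2}_0$ (with $\chi=\tfrac{d}{d-2}$ if $d\ge 3$, and any fixed $\chi>1$ if $d\le 2$), and taking $\eta$ equal to $1$ on $B_r$ and supported in $B_R$ with $1\le r<R\le 2$, I obtain the self-improving inequality
\[\Bigl(\int_{B_r}\bar u^{2\beta\chi}\Bigr)^{1/\chi}\le\frac{C\beta(1+\|c\|_\infty)}{(R-r)^2}\int_{B_R}\bar u^{2\beta},\]
where $R-r\le 1$ has been used to fold the term $\int\eta^2\bar u^{2\beta}$ into the one carrying $(R-r)^{-2}$.

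Finally I would iterate with $\beta_j=\chi^j$ and radii $r_j=1+2^{-j}$: raising the $j$-th instance to the power $1/(2\beta_j)$ and writing $\Phi(p,\rho)=\bigl(\int_{B_\rho}\bar u^p\bigr)^{1/p}$ gives
\[\Phi(2\beta_{j+1},r_{j+1})\le\bigl(C(1+\|c\|_\infty)\,\chi^j\,4^{\,j+1}\bigr)^{1/(2\chi^j)}\,\Phi(2\beta_j,r_j).\]
Since $\sum_j j\chi^{-j}<\infty$, the infinite product of these factors converges to a constant $C^\ast=C^\ast(d,\Lambda,\|c\|_\infty)$; letting $j\to\infty$ and using $\Phi(2\chi^j,r_j)\ge\|\bar u\|_{L^{2\chi^j}(B_1)}\to\|\bar u\|_{L^\infty(B_1)}$ yields $\sup_{B_1}\bar u\le C^\ast\bigl(\int_{B_2}\bar u^2\bigr)^{1/2}\le 2C^\ast|B_2|^{1/2}k$, hence $\sup_{B}u^+\le\sup_{B_1}\bar u\le C\bigl(|2B|^{-1}\int_{2B}|u^+|^2\bigr)^{1/2}$ after undoing the normalization. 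None of this is deep: the steps that require care are the treatment of the lower-order term $cu$ (which forces the test function to vanish where $u\le 0$ via the $-k^{2\beta-1}$ correction) together with the truncation and limit in $M$, and the verification that the product of the iteration constants converges --- for which the strict improvement $\chi>1$ in Sobolev and the merely polynomial-in-$\beta$ growth of the constants are precisely what is needed. I expect the bookkeeping around the lower-order term to be the main nuisance; the core iteration is entirely standard.
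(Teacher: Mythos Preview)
Your proposal is correct: it is a clean execution of Moser's iteration, with the standard device of shifting by $k$ and subtracting $k^{2\beta-1}$ so that the test function vanishes on $\{u\le 0\}$ and the zeroth-order term is harmless. The paper does not actually prove this theorem; it simply attributes it to ``classical iteration methods of De Giorgi and Moser'' and cites \cite[Chapter 4]{HL}, which is precisely the argument you have sketched.
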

This gives immediately the equivalence of norms
\begin{corollary}\label{cor:norms}
Suppose that $Lu=0$ in $2B_0$, where $B_0$ is the unit ball of $\R^d$, then 
\[C_1\|u\|_{L^2(B)}\le\|u\|_{L^\infty(B)}\le C_2\|u\|_{L^2(2B)},\]
where $C$ depends on $d,\Lambda$ and $\|c\|_\infty$.
\end{corollary}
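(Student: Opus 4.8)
The plan is to deduce both inequalities directly from the local boundedness estimate quoted just above, together with one elementary observation. The key point is that the two-sided bound is really a soft consequence, so the work is mostly bookkeeping of the normalization constants.

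First I would dispatch the lower bound $C_1\|u\|_{L^2(B)}\le\|u\|_{L^\infty(B)}$, which uses no PDE information at all: from the pointwise inequality $|u(x)|\le\|u\|_{L^\infty(B)}$ for $x\in B$, integrating over $B$ gives $\int_B|u|^2\le |B|\,\|u\|_{L^\infty(B)}^2$, so the estimate holds with $C_1=|B|^{-1/2}$, which is a purely dimensional constant once $B$ ranges over balls of bounded radius. For the upper bound, I would first note that $Lu=0$ implies simultaneously $Lu\ge 0$ and $L(-u)\ge 0$, so the local boundedness theorem applies to both $u$ and $-u$. Applied to $u$ it yields
\[\sup_{B}u^+\le C\Big(|2B|^{-1}\int_{2B}|u^+|^2\Big)^{1/2}\le C\Big(|2B|^{-1}\int_{2B}|u|^2\Big)^{1/2},\]
and applied to $-u$ it gives the same bound for $\sup_{B}u^-=\sup_B(-u)^+$. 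Since $\|u\|_{L^\infty(B)}=\max(\sup_B u^+,\sup_B u^-)$, combining the two estimates gives $\|u\|_{L^\infty(B)}\le C|2B|^{-1/2}\|u\|_{L^2(2B)}$, and absorbing the fixed factor $|2B|^{-1/2}$ into the constant (it is bounded since $2B\subset 2B_0$) produces the claimed inequality with $C_2$ depending only on $d$, $\Lambda$ and $\|c\|_\infty$.

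Since the whole argument is a two-line consequence of the stated local boundedness estimate, there is no genuine obstacle here. The only things that require a moment's attention are that one must run the estimate for both $u$ and $-u$ to upgrade control of $u^+$ to control of $|u|$, and that one should keep track of the volume factors $|B|$ and $|2B|$, which are harmless because they are bounded above and below by dimensional constants on the range of balls under consideration.
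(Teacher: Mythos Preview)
Your argument is correct and is exactly what the paper intends: the paper gives no proof beyond the sentence ``This gives immediately the equivalence of norms,'' and your write-up simply spells out that immediacy --- the trivial $L^2\le|B|^{1/2}L^\infty$ bound for the left inequality, and the local boundedness theorem applied to both $u$ and $-u$ for the right inequality.
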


Another part of the regularity theory that goes back to De Giorgi and Moser is the following oscillation theorem (see \cite[Chapter 4]{HL}).

\begin{theorem}\label{th:osc}[Oscillation inequality] Let $L=\dv(A\nabla\cdot)$ be a uniformly elliptic operator in $\Omega$. There exists $q=q(\Lambda)<1$ such that for any ball $B$ such that $2B\subset\Omega$ 
\[\sup_{B}u-\inf_Bu<q(\sup_{2B}u-\inf_{2B}u).\]
\end{theorem}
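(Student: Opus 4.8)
The operator here has no zeroth--order term, so constants satisfy $Lv=0$; the idea is to reduce the two--sided oscillation estimate to a one--sided lower bound for non--negative solutions. Write $M=\sup_{2B}u$ and $m=\inf_{2B}u$, and set $v_1=M-u\ge 0$ and $v_2=u-m\ge 0$. Both $v_1$ and $v_2$ are non--negative solutions of $Lv=0$ in $2B$.

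First I would apply the Harnack inequality for non--negative solutions (recorded in these notes as Theorem~\ref{th:Harnack}), which provides a constant $C=C(d,\Lambda)>1$ with $\sup_B v\le C\inf_B v$ for every non--negative solution $v$ of $Lv=0$ in $2B$. Applied to $v_1$ this gives $M-\inf_B u\le C\,(M-\sup_B u)$, and applied to $v_2$ it gives $\sup_B u-m\le C\,(\inf_B u-m)$.

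Adding these two inequalities and writing $\omega=\sup_B u-\inf_B u$, $\Theta=M-m=\sup_{2B}u-\inf_{2B}u$, one gets $\Theta+\omega\le C\Theta-C\omega$, hence
\[
\sup_B u-\inf_B u\ \le\ \frac{C-1}{C+1}\,\bigl(\sup_{2B}u-\inf_{2B}u\bigr).
\]
Replacing the exponent by any larger number in $(0,1)$, e.g. $q=C/(C+1)$, yields the strict inequality for non--constant $u$; the resulting $q=q(d,\Lambda)<1$ depends only on the ellipticity constant (and the dimension). If one prefers to avoid quoting Harnack, the same conclusion follows from the De Giorgi density estimate applied in the dichotomy $|\{u\le (M+m)/2\}\cap 2B|\ge \tfrac12|2B|$ versus the reverse: in the first case the density estimate gives $\inf_B(M-u)\ge \delta (M-m)/2$, whence $\sup_B u-\inf_B u\le (1-\delta/2)\,\Theta$, and the other case is symmetric.

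\textbf{Main obstacle.} Everything after the choice of $v_1,v_2$ is the two lines of algebra above; the genuine content is the input --- the (weak) Harnack inequality, equivalently the statement that a non--negative solution which is not too small on a set of positive proportion of $2B$ is bounded below on $B$. This is the heart of De Giorgi--Nash--Moser theory: its upper half is the Local boundedness estimate already stated in this appendix (De Giorgi iteration on super--level sets $\{u>k\}$), while the complementary lower bound is obtained by running the iteration on $\log(v+\varepsilon)$ together with a Poincar\'e-type estimate, or via a Krylov--Safonov covering argument. I expect this density/Harnack estimate to be the only step requiring real work; the oscillation inequality is then a formal consequence, and iterating it on the nested balls $2^{-k}(2B)$ also produces the decay $\osc_{sB}u\to 0$ as $s\to 0$ used in Section~\ref{ss:base}.
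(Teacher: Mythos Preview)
The paper does not actually prove this theorem: it is listed in the Appendix among the standard De Giorgi--Nash--Moser facts, with a reference to \cite[Chapter 4]{HL} (and a remark that the Landis/Krylov--Safonov approach yields the same conclusion). So there is no in-paper proof to compare against.

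Your derivation is correct and is exactly the standard one: with $M=\sup_{2B}u$, $m=\inf_{2B}u$, apply Harnack to the non-negative solutions $M-u$ and $u-m$, add, and read off $\osc_B u\le \frac{C-1}{C+1}\osc_{2B}u$. Your alternative via the De Giorgi density lemma (the dichotomy on $\{u\le(M+m)/2\}$) is equally standard and avoids any worry about circularity between Harnack and the oscillation estimate, since in the usual development the weak Harnack/density step comes first and the full Harnack inequality is assembled afterwards. One small cosmetic point: the inequality you obtain is $\le$, not the strict $<$ written in the statement; the strict version fails when $u$ is constant on $2B$, so either read the theorem with $\le$ or take any $q\in\bigl(\tfrac{C-1}{C+1},1\bigr)$ and note the inequality is strict whenever $\osc_{2B}u>0$.
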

The difference  $\sup_Bu-\inf_Bu$ is called the oscillation of the function $u$ in $B$ and denoted by $\osc_Bu$.

A different way to obtain regularity  was discovered by Landis (see \cite{L63} for details) and developed to elliptic equations is non-divergence form with bounded coefficients by Krylov and Safonov, see
\cite{L63, Lbook, KS, S}. This approach also leads to the oscillation inequality.

Finally, we formulate the Harnack inequality of Moser for solutions of elliptic equations in divergence form, see for example \cite[Chapter 4]{HL}.
\begin{theorem}[Harnack inequality]\label{th:Harnack} Let $u$ be a non-negative solution to  elliptic equation $\dv(A\nabla u)=0$ in $\Omega$, $2B\subset\Omega$. Then
\[\sup_{B}u\le C\inf_{B} u,\quad C=C(d,\Lambda).\]
\end{theorem}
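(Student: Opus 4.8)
The plan is to establish the Harnack inequality by Moser's iteration scheme, in three stages: an upper bound $\sup_B u\le C(|B'|^{-1}\int_{B'}u^p)^{1/p}$ for a positive exponent $p$ and a slightly larger concentric ball $B'$; a matching lower bound $\inf_B u\ge C^{-1}(|B'|^{-1}\int_{B'}u^{-p})^{-1/p}$ with a negative exponent; and a bridge between the two powers furnished by the John--Nirenberg inequality applied to $\log u$. (An alternative route, along the lines of Landis and Krylov--Safonov, works through pointwise measure estimates, but for equations in divergence form the Moser approach is the most direct.) Since replacing $u$ by $u+\eps$ keeps it a nonnegative solution and all the constants below will be stable as $\eps\to0$, I may assume $u\ge c>0$, so that negative powers of $u$ are legitimate. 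Fix concentric balls $B\subset B'\subset B''$ with $B''\subset\Omega$.

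For the \emph{upper bound} I invoke the local boundedness theorem stated above: applied to the subsolution $u$ (which solves $Lu=0\ge0$ with no lower order term, so $C=C(d,\Lambda)$) it gives $\sup_{B}u\le C(|B''|^{-1}\int_{B''}u^2)^{1/2}$ on concentric balls, and the standard interpolation/absorption trick — write this sup bound on a family of shrinking balls, use Young's inequality to trade an $L^2$ average for an $L^p$ average at the cost of a large constant times a slightly larger sup, then absorb — upgrades it to
\[\sup_{B}u\ \le\ C(p,d,\Lambda)\left(\frac{1}{|B'|}\int_{B'}u^{p}\right)^{1/p}\qquad\text{for every }p>0.\]
For the \emph{lower bound} I run the iteration directly: for $\beta<0$ I test the equation against $\eta^2u^{\beta}$ with a cutoff $\eta$; ellipticity and Young's inequality yield a Caccioppoli-type estimate for $u^{\beta/2}$, which fed into the Sobolev inequality (exponent $\chi=d/(d-2)$, or any $\chi>1$ when $d=2$) produces a reverse-H\"older chain
\[\left(\frac{1}{|B_{r}|}\int_{B_{r}}u^{\chi\beta}\right)^{1/(\chi\beta)}\ \ge\ \left(\frac{C}{|B_{R}|}\int_{B_{R}}u^{\beta}\right)^{1/\beta}\qquad(\beta<0),\]
the direction of the inequality being reversed because the exponent $1/\beta$ is negative. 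Iterating over $\beta=-p,-\chi p,-\chi^{2}p,\dots$, with $1/\beta\to0^{-}$, gives $(|B'|^{-1}\int_{B'}u^{-p})^{-1/p}\le C(p,d,\Lambda)\inf_{B}u$; no forbidden exponent is encountered since $\beta$ stays negative.

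It remains to \emph{bridge} the exponents $p$ and $-p$. Testing the equation against $\eta^{2}u^{-1}$ and using ellipticity gives $\int\eta^{2}|\nabla\log u|^{2}\le C\int|\nabla\eta|^{2}$, hence $|B_{r}|^{-1}\int_{B_{r}}|\nabla\log u|^{2}\le Cr^{-2}$ on balls with $2B_{r}\subset\Omega$; the Poincar\'e inequality then shows $\log u$ has bounded mean oscillation with $\|\log u\|_{BMO(B'')}\le C(d,\Lambda)$. The John--Nirenberg inequality produces a small $p=p(d,\Lambda)>0$ with $\bigl(|B'|^{-1}\int_{B'}u^{p}\bigr)\bigl(|B'|^{-1}\int_{B'}u^{-p}\bigr)\le C(d,\Lambda)$, and combining the three stages for this $p$,
\[\sup_{B}u\ \le\ C\left(\frac{1}{|B'|}\int_{B'}u^{p}\right)^{1/p}\ \le\ C'\left(\frac{1}{|B'|}\int_{B'}u^{-p}\right)^{-1/p}\ \le\ C''\,\inf_{B}u,\]
with $C''=C''(d,\Lambda)$; letting $\eps\to0$ completes the proof. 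The main obstacle is the second and third stages together: obtaining the Caccioppoli estimates for general powers $u^{\beta}$ with controlled dependence on $\beta$, and especially showing $\log u\in BMO$ with a norm depending only on $d$ and $\Lambda$, which is precisely where the divergence structure is exploited and which makes the John--Nirenberg exponent $p$ uniform. (A minor point: if one insisted on proving the upper bound by iteration rather than quoting local boundedness, one would have to cross the exponent $\beta=1$, where the Caccioppoli constant degenerates, by a covering argument; quoting the local boundedness theorem avoids this.)
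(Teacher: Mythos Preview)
The paper does not actually prove this theorem: it is stated in the Appendix as a classical background result, with a reference to \cite[Chapter~4]{HL} for a proof and a remark that an alternative argument bypassing iteration appears in \cite{S}. Your outline is the standard Moser iteration scheme (local boundedness for the upper bound, negative-exponent iteration for the lower bound, and the $\log u\in BMO$ / John--Nirenberg bridge), which is correct and is exactly the approach one finds in the reference the paper cites; so there is nothing to compare.

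Two very minor remarks. First, in your parenthetical about crossing ``$\beta=1$'' in the positive-exponent iteration: with the usual test function $\eta^{2}u^{\beta}$ the Caccioppoli constant behaves like $(\beta+1)^{2}/\beta^{2}$ and does not degenerate at $\beta=1$; the genuinely delicate exponent in the Moser scheme is the one corresponding to $w=u^{0}$ (i.e.\ the $\log$ case), which you already handle separately in the bridge step. Second, when you iterate $\beta=-p,-\chi p,-\chi^{2}p,\dots$ you may wish to note that the starting $p$ can be chosen so that the sequence avoids any fixed problematic value, or simply observe that the constants in the reverse H\"older chain are summable in $k$; either way the argument goes through as you describe.
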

There is a nice proof of the Harnack inequality for solutions of elliptic equations in divergence form that bypasses the classical iteration methods can be found in \cite{S}. Note that in all of the results in this section the constants depend on the ellipticity constant only, thus we may apply the inequalities on small or big scales.


\subsection{Comparison to harmonic functions}
We will turn to elliptic PDEs in divergence form with Lipschitz coefficients. This smoothness assumption allows us to freeze the coefficients and consider the equation as a perturbation of the equation with constant coefficients. Changing coordinates, we can think about constant coefficient elliptic operator as a simple transformation of the usual Laplace operator. More precisely, let $u$ be a solution to
\[\dv(A\nabla u)=0,\]
where $A=\{a_{ij}(x)\},\ x\in\Omega$ and
\[|a_{ij}(x)-a_{ij}(y)|\le C|x-y|.\]
Then for any $x_0\in\Omega$ there is a ball $B_r(x_0)$ and a linear transformation $S:B_\rho(0)\to B_r(x_0)$ such that $f=u\circ S$ is a solution of elliptic equation $\dv(\tilde{A}\nabla f)=0$ with
\[\tilde{A}(0)=I,\quad |\tilde{a}_{ij}(y)-\delta_{ij}|\le C|y|.\]
Moreover $r/\rho$ is bounded, the bound depends on the ellipticity and Lipschitz constants for $A$.

We  mostly study local properties of solutions and then  reduce the problem to equation of this specific form. Note that when we apply this idea we get inequalities that hold on small scales, the constants depend on the Lipschitz constants of the coefficients and may grow when we consider large balls.

Classical regularity result implies that if $u\in W^{1,2}(\Omega)$ is a weak solution of the divergence form elliptic equation as above (with Lipschitz coefficients) and $\Omega'\Subset\Omega$ then $u\in W^{2,2}(\Omega')$ and then if $\partial\Omega'$ is smooth then by the trace property $u, |\nabla u|\in L^2(\partial\Omega')$.

 \subsection*{Acknowledgments} These notes are based on the lectures given by the second author at Park City Mathematics Institute Summer Program in July 2018. It is a great pleasure to thank the organizers for this great opportunity and wonderful time in Park City. We are grateful to many students and colleagues who attended the lectures and commented on earlier versions on the manuscript. In particular to Paata Ivanisvili, who held problem sessions for these lectures, and to Stine Marie Berge, who carefully read and commented the first version of the lecture notes.

%
%
%
%
%
%
%


\begin{bibdiv}
	\begin{biblist}

\bib{Ag}{book}{
AUTHOR = {Agmon, Shmuel},
     TITLE = {Unicit\'e et convexit\'e dans les probl\`emes diff\'erentiels},
    SERIES = {S\'eminaire de Math\'ematiques Sup\'erieures, No. 13 (\'Et\'e, 1965)},
 PUBLISHER = {Les Presses de l'Universit\'e de Montr\'eal, Montreal, Que.},
      YEAR = {1966},
     PAGES = {152},}

\bib{Al}{incollection}{
 AUTHOR = {F. J. Almgren Jr.},
     TITLE = {Dirichlet's problem for multiple valued functions and the
              regularity of mass minimizing integral currents},
 BOOKTITLE = {Minimal submanifolds and geodesics ({P}roc. {J}apan-{U}nited
              {S}tates {S}em., {T}okyo, 1977)},
     PAGES = {1--6},
 PUBLISHER = {North-Holland, Amsterdam-New York},
      YEAR = {1979},}

\bib{BH}{article}{
AUTHOR = {P. B\'erard and B. Helffer},
     TITLE = {A. {S}tern's analysis of the nodal sets of some families of
              spherical harmonics revisited},
   JOURNAL = {Monatsh. Math.},
    VOLUME = {180},
      YEAR = {2016},
    NUMBER = {3},
     PAGES = {435\ndash 468},}
	
\bib{BY}{article} {
    AUTHOR = {A. Brudnyi and Y. Yomdin},
     TITLE = {Norming sets and related {R}emez-type inequalities},
   JOURNAL = {J. Aust. Math. Soc.},
    VOLUME = {100},
      YEAR = {2016},
    NUMBER = {2},
     PAGES = {163--181},	}

	\bib{Ch}{book}{ AUTHOR = {Chavel, Isaac},
     TITLE = {Eigenvalues in {R}iemannian geometry},
    SERIES = {Pure and Applied Mathematics},
    VOLUME = {115},
      NOTE = {Including a chapter by Burton Randol,
              With an appendix by Jozef Dodziuk},
 PUBLISHER = {Academic Press, Inc., Orlando, FL},
      YEAR = {1984},
     PAGES = {xiv+362},
	}

\bib{Che} {article}{
         author={Sh. Y. Cheng}, 
        title={Eigenfunctions and nodal sets},
       journal={Comment. Math. Helv.}, 
       volume={51}, 
       year={1976}, 
  pages={43\ndash 55,}} 

\bib{CH} {book}{
    AUTHOR = {R. Courant and D. Hilbert},
     TITLE = {Methods of mathematical physics. {V}ol. {I}},
 PUBLISHER = {Interscience Publishers, Inc., New York, N.Y.},
      YEAR = {1953},
     PAGES = {xv+561},}

\bib{DF}{article}{
		      author={H. Donnelly and C. Fefferman},
		       title={Nodal sets of eigenfunctions on Riemannian manifolds},
		        year={1988},
		   journal={Invent. Math.},
		      volume={93},
		       pages={161\ndash 183},
		}

	\bib{DF1}{article}{
	 AUTHOR = {H. Donnelly and C. Fefferman},
     TITLE = {Nodal sets for eigenfunctions of the {L}aplacian on surfaces},
   JOURNAL = {J. Amer. Math. Soc.},
    VOLUME = {3},
      YEAR = {1990},
    NUMBER = {2},
     PAGES = {333\ndash 353}}

\bib{DF2}{incollection}{
    AUTHOR = {H. Donnelly and C. Fefferman},
     TITLE = {Growth and geometry of eigenfunctions of the {L}aplacian},
 BOOKTITLE = {Analysis and partial differential equations},
    SERIES = {Lecture Notes in Pure and Appl. Math.},
    VOLUME = {122},
     PAGES = {635--655},
 PUBLISHER = {Dekker, New York},
      YEAR = {1990},	}

\bib{E}{article}{
    AUTHOR = {Erd\'{e}lyi, Tam\'{a}s},
     TITLE = {Remez-type inequalities and their applications},
   JOURNAL = {J. Comput. Appl. Math.},
    VOLUME = {47},
      YEAR = {1993},
    NUMBER = {2},
     PAGES = {167--209},}

\bib{GL}{article}{
	AUTHOR = {N. Garofalo and  F. Lin},
     TITLE = {Monotonicity properties of variational integrals, {$A_p$}
              weights and unique continuation},
   JOURNAL = {Indiana Univ. Math. J.},
    VOLUME = {35},
      YEAR = {1986},
    NUMBER = {2},
     PAGES = {245--268},	}

\bib{G}{article}{
author={A. Gelfond}, 
title={Über die harmonischen Funktionen}, 
journal={Trav. Inst. Stekloff},
volume={5}, 
year={1934}, 
pages={149\ndash 158},}

\bib{GT}{book}{
author={D. Gilbarg and N. S. Trudinger},
title= {Elliptic Partial Differential Equations of Second Order},
publisher={Springer},
year={1998},}	

\bib{H}{article}{
   AUTHOR = {Han, Qing},
     TITLE = {Nodal sets of harmonic functions},
   JOURNAL = {Pure Appl. Math. Q.},
    VOLUME = {3},
      YEAR = {2007},
    NUMBER = {3, Special Issue: In honor of Leon Simon. Part 2},
     PAGES = {647--688},
      }
\bib{HL}{book}{
author={Q. Han and F. Lin},
title={Elliptic partial differential equations},
publisher={American Mathematical Society},
year={2011},}

\bib{HS}{article}{
AUTHOR = {R. Hardt and L. Simon},
     TITLE = {Nodal sets for solutions of elliptic equations},
   JOURNAL = {J. Differential Geom.},
    VOLUME = {30},
      YEAR = {1989},
    NUMBER = {2},
     PAGES = {505\ndash 522},}
		
		\bib{KS}{article}{
		AUTHOR = {N.V. Krylov and M.V. Safonov},
     TITLE = {A property of the solutions of parabolic equations with
              measurable coefficients},
   JOURNAL = {Izv. Akad. Nauk SSSR Ser. Mat.},
    VOLUME = {44},
      YEAR = {1980},
    NUMBER = {1},
     PAGES = {161--175, 239},}
		\bib{K}{article}{
AUTHOR = {I. Kukavica},
     TITLE = {Quantitative uniqueness for second-order elliptic operators},
   JOURNAL = {Duke Math. J.},
    VOLUME = {91},
      YEAR = {1998},
    NUMBER = {2},
     PAGES = {225--240},		}
		
		\bib{L63}{article}{
		AUTHOR = {Landis, E. M.},
     TITLE = {Some questions in the qualitative theory of second-order
              elliptic equations (case of several independent variables)},
   JOURNAL = {Uspehi Mat. Nauk},
    VOLUME = {18},
      YEAR = {1963},
    NUMBER = {1 (109)},
     PAGES = {3--62},} 
		
		\bib{Lbook}{book}{
		AUTHOR = {Landis, E. M.},
     TITLE = {Second order equations of elliptic and parabolic type},
    SERIES = {Translations of Mathematical Monographs},
    VOLUME = {171},
      NOTE = {Translated from the 1971 Russian original by Tamara
              Rozhkovskaya,
              With a preface by Nina Ural{'}ceva},
 PUBLISHER = {American Mathematical Society, Providence, RI},
      YEAR = {1998},
     PAGES = {xii+203},}
		
\bib{LiMa}{article}{
		AUTHOR = {G. Lippner and D. Mangoubi},
     TITLE = {Harmonic functions on the lattice: absolute monotonicity and
              propagation of smallness},
   JOURNAL = {Duke Math. J.},
    VOLUME = {164},
      YEAR = {2015},
    NUMBER = {13},
     PAGES = {2577--2595},
      ISSN = {0012-7094},
		}
		
\bib{L1}{article}{
		AUTHOR = {A. Logunov},
     TITLE = {Nodal sets of {L}aplace eigenfunctions: polynomial upper
              estimates of the {H}ausdorff measure},
   JOURNAL = {Ann. of Math. (2)},
    VOLUME = {187},
      YEAR = {2018},
    NUMBER = {1},
     PAGES = {221\ndash 239},}

\bib{L2}{article}{
AUTHOR = {A. Logunov},
     TITLE = {Nodal sets of {L}aplace eigenfunctions: proof of
              {N}adirashvili's conjecture and of the lower bound in {Y}au's
              conjecture},
   JOURNAL = {Ann. of Math. (2)},
    VOLUME = {187},
      YEAR = {2018},
    NUMBER = {1},
     PAGES = {241\ndash 262},}
	
\bib{LM1}{incollection}{

	AUTHOR = {A. Logunov and E. Malinnikova},
     TITLE = {Nodal Sets of Laplace Eigenfunctions: Estimates of the Hausdorff Measure in Dimensions Two and Three},
 BOOKTITLE = {50 Years with Hardy Spaces,
A Tribute to Victor Havin},
    SERIES = {Operator Theory: Advances and Applications},
    VOLUME = {261},
     PAGES = {333 \ndash 344},
 PUBLISHER = {Birkh\"a user},
      YEAR = {2018},}

	\bib{LM}{inproceedings}
{AUTHOR = {A. Logunov and E. Malinnikova},
     TITLE = {Quantitative propogation of smallness for solutions of elliptic equations},
 BOOKTITLE = {Proceedings of the {I}nternational {C}ongress of
              {M}athematicians---{R}io de Janeiro},
volume={2},
    PAGES = {2357--2378},
      YEAR = {2018},}

	\bib{Lub}{article}{

	 AUTHOR = {Lubinsky, D. S.},
     TITLE = {Small values of polynomials: {C}artan, {P}\'olya and others},
   JOURNAL = {J. Inequal. Appl.},
    VOLUME = {1},
      YEAR = {1997},
    NUMBER = {3},
     PAGES = {199--222},}

		\bib{Man}{article}{
		AUTHOR = {D. Mangoubi},
     TITLE = {The effect of curvature on convexity properties of harmonic
              functions and eigenfunctions},
   JOURNAL = {J. Lond. Math. Soc. (2)},
    VOLUME = {87},
      YEAR = {2013},
    NUMBER = {3},
     PAGES = {645--662},}

		\bib{N}{article}{
		AUTHOR = {Nadirashvili, N. S.},
     TITLE = {Uniqueness and stability of continuation from a set to the
              domain of solution of an elliptic equation},
   JOURNAL = {Mat. Zametki},
    VOLUME = {40},
      YEAR = {1986},
    NUMBER = {2},
     PAGES = {218\ndash 225, 287},}

\bib{N1}{article}{
AUTHOR = {Nadirashvili, N. S.},
     TITLE = {Metric properties of eigenfunctions of the {L}aplace operator
              on manifolds},
   JOURNAL = {Ann. Inst. Fourier (Grenoble)},
    VOLUME = {41},
      YEAR = {1991},
    NUMBER = {1},
     PAGES = {259\ndash 265},}

\bib{NPS}{article}{AUTHOR = {F. Nazarov and L. Polterovich and M. Sodin},
TITLE = {Sign and area in nodal geometry of {L}aplace eigenfunctions},
JOURNAL = {Amer. J. Math.},
VOLUME = {127},
YEAR = {2005},
NUMBER = {4},
PAGES = {879--910},}

\bib{R}{article}{
AUTHOR = {Robertson, M. S.},
     TITLE = {The variation of the sign of {$V$} for an analytic function
              {$U+iV$}},
   JOURNAL = {Duke Math. J.},
    VOLUME = {5},
      YEAR = {1939},
     PAGES = {512--519},}

\bib{S}{article}{
author={M.V. Safonov},
title={Narrow domains and the Harnack inequality for elliptic equations},
journal={St.Petersburg Math. J.},
year={2016},
volume={27},
pages={509\ndash 522},}	
	\bib{T}{book}{
AUTHOR = {G. Teschl},
     TITLE = {Ordinary differential equations and dynamical systems},
    SERIES = {Graduate Studies in Mathematics},
    VOLUME = {140},
 PUBLISHER = {American Mathematical Society, Providence, RI},
      YEAR = {2012},
     PAGES = {xii+356},		}

	\bib{V}{article}{
		AUTHOR = {Vessella, S.},
     TITLE = {Quantitative continuation from a measurable set of solutions
              of elliptic equations},
   JOURNAL = {Proc. Roy. Soc. Edinburgh Sect. A},
    VOLUME = {130},
      YEAR = {2000},
    NUMBER = {4},
     PAGES = {909--923},}

\bib{Y}{incollection}{
AUTHOR = {Yau, Shing Tung},
     TITLE = {Problem section},
 BOOKTITLE = {Seminar on {D}ifferential {G}eometry},
    SERIES = {Ann. of Math. Stud.},
    VOLUME = {102},
     PAGES = {669 \ndash 706},
 PUBLISHER = {Princeton Univ. Press, Princeton, N.J.},
      YEAR = {1982},}
 
\bib{Fig1}{misc}{
author={Eric J Heller},
title={http://ejheller.jalbum.net/Eric J Heller Gallery/slides/NodalDomainsI.html}	}
		
	\bib{Fig2}{misc}{
author={G. Poly et al},
title={https://unirandom.univ-rennes1.fr/gallery}
	
}		

	\end{biblist}
\end{bibdiv}

\end{document}